\newtheorem{theorem}{Theorem}[section]
\newtheorem{lemma}[theorem]{Lemma}
\newtheorem{definition}[theorem]{Definition}
\newtheorem{corollary}[theorem]{Corollary}
\newtheorem{example}{Example}
\newtheorem{observation}{Observation}
\newtheorem{remark}[theorem]{Remark}
\newtheorem{problem}{Problem}
\newtheorem{comment}{Comment}
\newcommand{\R}{{\mathbb R}}
\newcommand{\N}{{\mathbb N}}
\newcommand{\su}{
{\bf S}}
\newcommand{\ci}{\mathcal I}
\newcommand{\what}{\;\hat{}\;}
\newcommand{\WO}{\textup{\textsf{WO}}}
\newcommand{\HBU}{\textup{\textsf{HBU}}}
\newcommand{\Ca}{\textup{\textsf{C}}}
\newcommand{\Ba}{\textup{\textsf{B}}}
\newcommand{\Po}{{\mathcal P}}
\newcommand{\LL}{\textup{\textsf{L}}}
\begin{document}

\title{Computability and  Non-monotone Induction}

\author{Dag Normann}
\address{Department of Mathematics, The University 
of Oslo, P.O. Box 1053, Blindern N-0316 Oslo, Norway}
\email{dnormann@math.uio.no}
\thispagestyle{empty}

\newpage
\thispagestyle{empty}

\begin{abstract}Non-monotone inductive definitions   were studied in the late 1960's and early 1970's with the aim of understanding connections between the complexity of the formulas defining the induction steps and the size of the ordinals measuring the duration of the inductions. In general, any type 2 functional will generate an inductive process, and in this paper we will   view non-monotone induction as a functional of type 3. We investigate the associated computation theory inherited from the Kleene schemes and we investigate the nature of the associated companion of sets with codes computable in non-monotone induction. The interest in this functional  is motivated from observing that constructions via non-monotone induction appear as natural in classical analysis in its original form.

There are two groups of results: We establish strong closure properties of the least ordinal without a code computable in non-monotone induction, and we provide a characterisation of the class of functionals of type 3 computable from non-monotone induction, a characterisation  in terms of sequential operators working in transfinite time.
We will also see that the full power of non-monotone induction is required when this principle is used to construct functionals witnessing the compactness of the Cantor space and of closed, bounded intervals.
\end{abstract}

\maketitle

\section{Introduction}\label{1.}
\subsection{Motivation and history}With the introduction of set theory in the second half of the 19th century, mathematicians had more tools in their toolbox than before, they had a richer language in which to express mathematical properties, but they also had tools like transfinite recursion and the use of the axiom of choice. One of these tools, inspired from the new ordinal numbers introduced by Cantor,  is non-monotone induction over the set of integers, seen as an operator of order four, or of type 3 in the terminology of type theory. 

It is worth noticing that the set-theoretical language mostly used at the time is of third order, while coding is needed to capture the same concepts in second order arithmetic (SOA). In a series of papers\cite{P1,P2,P3,P4,P5,P6,P7}, Sam Sanders and the author have investigated the logical and computability strength of some of the results using such  tools, when expressed in a language close to how it was originally done. 
\smallskip

Non-monotone inductive definitions  were studied in the late 1960's and early 1970's, but the general interest has been low since then. Examples of papers on the subject are \cites{AR,DS,RA,staal}. The inductive definitions were classified according to the complexity of the formulas defining them, and the key property of interest was the complexity of the corresponding closure ordinals. This could be expressed in terms of reflection properties as in \cite{RA} or by comparing classes of closure ordinals as in \cite{staal}.
\smallskip

In this paper we
will view non-monotone inductive definability over $\N$ via a functional $\ci$ of type 3 (Definition \ref{def.ind}), and investigate the strength of Kleene computability (Definition \ref{Kleene}) relative to $\ci$ . As there is no justifiable Church-Turing thesis for the computability theory of higher order functionals, Kleene computability is just one possible model, but since this model has proved to be fruitful for the analysis of discontinuous functionals of type 2, and for computability relative to the Superjump as defined by Gandy \cite{Gandy}, see Harrington \cite{H72}, Kleene computability is a natural model for the investigation of the computational strength of non-monotone induction.
\smallskip

The motivation for bringing up non-monotone induction once again is the observation that this functional represents a natural upper complexity-bound for other functionals appearing as realisers for classical theorems such as the Heine-Borel theorem and the Baire Category theorem, when these theorems are formalised in a set-theoretic language and not within the restricted language of second order arithmetic.
\smallskip

The first application of non-monotone inductive definitions known to the author is due to E. Borel \cite{B95}. The motivation of Borel was to give a \emph{direct} proof of the theorem now known as the Heine-Borel theorem.  The assumption was that we are given a way to associate an open neighbourhood $O_x$  to each $x$ in a closed interval $[a,b]$ and the claim was that we can then explicitly find a finite sub-covering. In the terminology of today, Borel constructed a functional taking the map $x \mapsto O_x$ as the argument and yielding a finite subcovering as the value. The definition of this functional is by transfinite recursion, building up finite subcoverings of larger and larger closed subintervals, a construction that can be viewed as a simultaneous non-monotone inductive definition of  Dedekind cuts for numbers $c \leq b$ and finite subcoverings of each closed interval $[a,d]$ for $d < c$. In \cite{P1}, a realiser  $\Theta_0$ of the uncountable Heine-Borel theorem ($\HBU$) is defined. This realiser selects a finite set $x_1 , \ldots , x_n$ such that the corresponding open neighbourhoods form a subcovering. It is proved in \cite{P1} that $\Theta_0$, in conjunction with $^2E$, computes the Suslin functional (see below), and in Normann \cite[Theorem 1(c)]{N18} it is shown that any realiser $\Theta$  of $\HBU$ as above , in conjunction with the Suslin operator, computes the functional $\ci$ to be defined below. We will slightly improve this theorem, see Section \ref{7.2}.
\smallskip

Realisers $\Xi$ for the Lindel\"of lemma for Baire space $\N^\N$ (homeomorphic to the irrationals) is one  class of functionals discussed in \cite{N18}, where  it is proved in Theorem 1 that any such realiser will compute $\ci$ and that there is at least one such realiser computable in $\ci$. Thus non-monotone induction reflects the complexity of witnessing the Lindel\"of lemma in this special case.

\smallskip

In \cite{P6} the aim is to investigate real line topology with the purpose of classifying the complexity of theorems and concepts in terms of their \emph{reverse mathematics} and \emph{computational complexity}. Representations of open sets, such as being countable unions of rational neighbourhoods, are based on mathematical insight, and analysing the logical and computational strength of such insight is part of the aim of \cite{P6}. Given representations of open sets as in classical reverse mathematics, using second order arithmetic, the Baire Category Theorem is effective in the sense that given ( a representation of) a sequence of dense open sets we can compute a fast-converging Cauchy-sequence for a point in the intersection. In \cite[Theorem 6.5]{P6} it is proved that, using non-monotone induction, we can find a functional $\xi$ taking a sequence $\{X_k\}_{k \in \N}$ of subsets of $\R$ as arguments and yielding an $x \in \R$ as value, such that whenever each $X_k$ is dense and open then \[\xi(\{X_k\}_{k \in \N}) \in \bigcap_{k \in \N}X_k.\]
In \cite[Theorem 6.6]{P6} it is proved that no such functional $\xi$ can be computable in any functional of type 2, but it remains open to decide if the full power of non-monotone induction is needed for obtaining a functional $\xi$ like this.
\subsection{Overview and results}
In Section \ref{2.} we will define the functional $\ci$ that is our main subject of investigation, and we will define the Kleene-computations via the schemes S1-S9 with $\ci$ as the one argument of type 3. We observe two interpretations of these schemes, one where we follow Kleene and restrict the application scheme S8 to total inputs and one where we relax on this requirement. We show that the two interpretations lead to the same class of  functions of type 1 computable in $\ci$. We use this to prove what is known as \emph{stage comparison} and \emph{Gandy selection} for the interpretation using partial inputs.
\smallskip

In Section \ref{Comp} we investigate the least ordinal $\pi$ not computable in $\ci$, and the associated companion $\LL_\pi$. We prove that the set of codes $f \in \WO$ of $\pi$ is not computable in $\ci$, and thus in particular not a $\Pi^1_1$-set (Corollary \ref{cor.car.1}). We also establish a number of reflection properties for $\pi$.
\smallskip

Section \ref{3.} is a preparation for Section \ref{5.}. In Section \ref{3.} we introduce what we call \emph{hyper-sequential procedures} and in Section \ref{5.} we narrow down this concept to \emph{inductive procedures}. These procedures model nested systems of non-monotone inductions, using our new concept of \emph{ blockings} to organise the nesting. The inductive procedures can be used to characterise the class of functionals of type 3 computable in $\ci$.
\smallskip

In Section \ref{chapt.7} we look at some of the functionals serving as realisers for classical theorems in analysis, primarily theorems where the proof in some way depends on the compactness of the unit line or Cantor space. We will see that when such realisers  are constructed in a natural way, they implicitly have the full power of non-monotone induction. In conjunction with the Suslin functional $\su$, all realisers of the theorems in question will compute $\ci$.
We will illustrate how to use compactness for computing $\ci$  in the proof of Lemma \ref{lemma.pin}, a lemma that  is a slight improvement of \cite[Theorem 1 (c)]{N18}.
\smallskip

In Section \ref{sec8} we briefly discuss what it means to relativise these results to functionals of type 2 and in Section \ref{sec9} we summarise the paper and discuss a few open problems.

\section{Non-monotone induction and computability}\label{2.}
\subsection{Inductive definitions}\label{2.1}
Mathematically we can identify the Cantor set  $\Ca  = \{0,1\}^\N$ with the powerset $\Po(\N)$ of the integers, where we identify a set with its characteristic function. In this paper, we will use both notations, as it sometimes is essential that we consider the set as $\Ca$, the compact set, and sometimes consider the set $\Po(\N)$ where the inclusion ordering is essential.  This view will be relevant when we define non-monotone induction, but mathematically we use $\Ca$ as the formal definition of the set under consideration, and treat it as $\Po(\N)$ when this is convenient. When elements of $\Ca$ are viewed as characteristic functions, the point-wise ordering $\leq$ coincides with the inclusion ordering $\subseteq$.
\begin{definition}\label{def.ind}{\em Let $F:\Ca \rightarrow \Ca$ be a functional of type 2. 
\begin{itemize}
\item[a)]We  view $F$ as an inductive definition, defining the increasing sequence $f_\beta$ in $\Ca$  where $\beta$ runs over the countable ordinals, by transfinite recursion as follows:

\begin{enumerate}
\item $f_0$ is the constant zero
\item $f_{\beta + 1} = \max\{f_\beta , F(f_\beta)\}$
\item If $\beta$ is a limit ordinal, $f_\beta = \sup_{\gamma < \beta} f_\gamma$.

\end{enumerate}

\item[b)]
There will, for cardinality reasons, be a least countable ordinal $\alpha_F$ such that $f_{\alpha_F} = f_{\alpha_F + 1}$. Then $\alpha_F$ is the least ordinal $\alpha$ such that $F(f_\alpha) \leq f_\alpha$.
We let $\ci$ be defined by
$\ci(F) = f_{\alpha_F}$,
with the notation introduced above.
\end{itemize}
If we need to point to the functional $F$, we write  $f^F_\beta$.}\end{definition}

\begin{remark}\label{Remark.2.1}{\em We are not fully in the realm of Kleene-computability, since this is developed for total functionals of pure type only. However, if $G$ is of pure type 2, we may consider $G$ as a code for 
\[F_G(f)(n) = \min\{G(n\what f), 1\},\]
where $f \in \Ca$ and with the standard concatenation-understanding of $n\what f \in \N^\N$. Using standard coding, we my also consider $\ci$ as a functional of type 3. For the sake of readability, we prefer to use a customised version of Kleene's definition, as defined in Section \ref{2.kleene}, when we investigate the computational strength of $\ci$.

}\end{remark}
\begin{example}{\em We view $\Ca$ as the powerset of $\N$ and let $G:\Ca \rightarrow \N$. For pure cardinality reasons, there must be $A \neq B \subseteq \N$ such that $G(A) = G(B)$, and, by the axiom of choice, there will be a functional $\Phi$ such that for every $G$, $\Phi(G)$ is such a pair. Now, the axiom of choice is not needed for this, as will be seen from an easy application of $\ci$:
\medskip

Given $G:\Ca \rightarrow \N$, let $F_G$ be defined by $F_G(A) = A \cup \{G(A)\}$. We then see that the transfinite iteration of $F_G$ will generate a strictly increasing sequence of sets $\{A_\beta\}_{\beta \leq \alpha}$ exactly until we have an $\alpha$, and a $\beta < \alpha$, such that $G(A_\beta) = G(A_\alpha)$.
\medskip

In \cite{P7} the complexity of such functionals $\Phi$ witnessing that there is no injection from $\Ca$ to $\N$ is studied in more detail, and it is proved that no such functional can be computed from an object of type two.

}\end{example}
\subsection{Kleene computability}\label{2.kleene}

Kleene \cite{kleene} defined a relation $\{e\}(\vec \Phi) = a$, in the form of a positive  inductive definition with nine cases, where $e$ is an \emph{index}, a natural number that serves as a G\"odel number for a generalised algorithm, and $\vec \Phi$ is a sequence of functionals of pure types in the type-structure of total functionals. The nine cases in the definition are called \emph{schemes} and are numbered as S1 - S9. For a recent introduction to Kleene computability, see Longley and Normann \cite[Chapter 5]{LN}.
\medskip

In this section we will mainly be concerned with computations of the form 
\[\{e\}(\ci  , \vec F, \vec f , \vec a)\] where  $\vec F $ is a sequence of functionals of type 2, $\vec f$ is a sequence of functions of type 1 and $\vec a $ is a sequence from $\N$. In Definition \ref{Kleene} we will restrict S1 - S9 to this case. In Section \ref{3.2} we will give a more general version of S8, accommodated to the content of that section. Our  version of S8 here, when restricted to the use of $\ci$ as the only object of type 3, will be equivalent to using the version of S8 in Section \ref{3.2} to the functional of pure type 3 that will represent $\ci$.

\begin{definition}\label{Kleene}{\em Using transfinite recursion, we define the relation $\{e\}(\ci, \vec F , \vec f , \vec a) = c,$
where $\ci$ is as defined, $\vec F = (F_1 , \ldots , F_m)$ is a sequence from $\N^\N\rightarrow \N$, $\vec f = (f_1 , \ldots , f_n)$ is a sequence from $\N^\N$, $\vec a = (a_1 , \ldots , a_k)$ is a sequence from $\N$ and $c \in \N$, as follows.
\begin{itemize}
\item[S1] If $e = \langle 1 \rangle$, then $\{e\}(\ci ,\vec F ,\vec f , \vec a) = a_1 + 1$.
\item[S2] If $e = \langle 2,q\rangle$, then $\{e\}(\ci,\vec F , \vec f , \vec a) = q$.
\item[S3] If $e = \langle 3\rangle$, then $\{e\}(\ci,\vec F , \vec f , \vec a) = a_1$.
\item[S4] If $e = \langle 4, e_1,e_2\rangle$, $\{e_2\}(\ci, \vec F, \vec f , \vec a) = b$ and $\{e_1\}(\ci ,\vec F, \vec f , b , \vec a) = c$, then $\{e\}(\ci  ,\vec F,  \vec f , \vec a) = c$.
\item[S6]If $e = \langle e_1 ,\tau_1 , \tau_2, \tau_3, \rangle$, where $\tau_1$ ,  $\tau_2$ and $\tau_3$ are permutations of (the index sets for) the input sequences $\vec F$, $\vec f$ and $\vec a$, then $\{e\}(\ci , \vec F, \vec f , \vec a) = \{e_1\}(\ci , \vec F_{\tau_1}, \vec f_{\tau_2} ,\vec a_{\tau_3})$.
\item[S7] If $e = \langle 7 \rangle$, then $\{e\}(\ci ,\vec F, \vec f , \vec a) = f_1(a_1)$.
\item[S8] For this scheme  there will be subcases, one for each type $> 1$. For us, there will be two subcases, where the case for type 3 is where we adjust the definition to application of $\ci$:
\begin{itemize}
\item[2.] If $e = \langle 8,2,d\rangle$ then $\{e\}(\ci ,\vec F, \vec f , b , \vec a) = F_1(g)$ when
$g(a) = \{d\}(\ci , \vec F , \vec f , a , \vec a)$ is a total function. We write
\[\{e\}(\ci , \vec F , \vec f , \vec a) = F_1(\lambda a. \{d\}(\ci , \vec F , \vec f , a, \vec a)).\]

\item[3.]If $e = \langle 8,3,d\rangle$ we let
$\{e\}(\ci ,\vec F,  \vec f , b , \vec a) = \ci(F_G)(b)$ where $G(f) = \{d\}(\ci ,\vec F,  f , \vec f, \vec a ).$
\end{itemize}

\item[S9] If $e = \langle 9 \rangle$ then $\{e\}(\ci ,\vec F,  \vec f , d , \vec a) = c$ if $\{d\}(\ci,\vec F , \vec f , \vec a) = c$.
\end{itemize}
}\end{definition}
\begin{remark}{\em We have excluded  S5, the scheme of primitive recursion, from our definition. There are two reasons for this. The main reason is that one may prove the recursion theorem on the basis of the other schemes, and thus S5 will be redundant. The other reason is that, since recursion is iterated composition, all arguments involving S5 that we need will be covered by how we deal with S4.}\end{remark}

Kleene computability inherits several of the key properties of classical computability, such as the $\textsf{S}_{n,m}$-theorem and the recursion theorem. The existence of universal algorithms is \emph{axiomatised} in the form of S9. In the sequel, we will assume familiarity with these basic properties.
\subsection{The computability theory of $\ci$}

We first prove that the prototype of discontinuity is computable in $\ci$.
\begin{definition}{\em
We define the functional $^2E$ of type 2 by
\[^2E(f) = \left\{ \begin{array}{ccc} 0 & {\rm if} & \forall k (f(k) = 0) \\ 1 & {\rm if} & \exists k (f(k) > 0)\end{array} \right. \] }\end{definition}

\begin{lemma} The functional $^2E$ is computable in $\ci$.
\end{lemma}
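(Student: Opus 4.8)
The plan is to realise $^2E$ as a single application of $\ci$ to a type-2 functional that I build uniformly from the input $f$, then read the answer off at a fixed coordinate. The guiding idea is that $\ci$ hands us the \emph{entire} closure $f_{\alpha_F}$ as a type-1 object, so the only real task is to design an inductive definition that \emph{propagates} the existence of a witness $k$ with $f(k)>0$ down to coordinate $0$, while remaining genuinely total on all of $\Ca$.

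Concretely, given $f$ I would invoke scheme S8.3 together with Remark \ref{Remark.2.1}. Let $G(h) = f(h(0)) + h(h(0)+2)$, which is computable from $f$ by the elementary schemes and hence has a Kleene index $d$. Since $(n\what A)(n+2)=A(n+1)$, the induced functional satisfies $F_G(A)(n) = \min\{f(n)+A(n+1),1\}$, i.e. $n\in F_G(A)$ iff $f(n)>0$ or $n+1\in A$. Note that $F_G$ queries only the two values $f(n)$ and $A(n+1)$, so it is total on all of $\Ca$; this is the entire point of the downward-chaining definition, and it is what lets me avoid the divergence that a naive ``search for the current frontier'' definition would produce at limit stages.

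Next I would compute the stages of Definition \ref{def.ind} for this $F_G$. A straightforward induction on $m$ gives $f_m = \{\, n : \exists k\,(n\le k < n+m \wedge f(k)>0)\,\}$, whence $f_\omega = \{\, n : \exists k\ge n\; f(k)>0\,\}$. Since $F_G(f_\omega)=f_\omega$ (the defining disjunction for $n$ collapses to ``$\exists k\ge n\; f(k)>0$''), the closure ordinal satisfies $\alpha_{F_G}\le\omega$ and $\ci(F_G)=f_\omega$; in the degenerate case $f\equiv 0$ one already has $f_1=f_0=\emptyset$, so $\ci(F_G)=\emptyset$. In either case $0\in\ci(F_G)$ iff $\exists k\, f(k)>0$, that is, $\ci(F_G)(0) = {}^2E(f)$.

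Finally, by S8.3 there is an index $e$ with $\{e\}(\ci,f)=\ci(F_G)(0)$ (fix the argument $b=0$ via S2 and S4), and this holds uniformly in $f$, exhibiting $^2E$ as computable in $\ci$. The only genuinely delicate point is the totality of $F_G$ together with the correct identification of the limit stage $f_\omega$ as the fixed point that $\ci$ returns; the arithmetic verification of the stage formula and of the concatenation offset $(n\what A)(n+2)=A(n+1)$ is routine.
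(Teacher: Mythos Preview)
Your proof is correct and takes essentially the same approach as the paper: the inductive definition $F_G(A)=\{n:f(n)>0\}\cup\{n:n+1\in A\}$ that you obtain via Remark~\ref{Remark.2.1} is literally the paper's $F_f$, and both conclude by reading off coordinate~$0$ of the closure. You simply supply more detail---the explicit encoding through $G(h)=f(h(0))+h(h(0)+2)$, the stage computation $f_m=\{n:\exists k\in[n,n+m)\,f(k)>0\}$, and the verification that $f_\omega$ is the fixed point---whereas the paper states $F_f$ and the equivalence $0\in\ci(F_f)\Leftrightarrow\exists k\,f(k)>0$ without elaboration.
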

\begin{proof}
Given $f \in \N^\N$, we want to decide if $\exists k (f(k) > 0)$. Let 
\[F_f(A) = \{k : f(k) > 0\} \cup \{k : k+1 \in A\}.\]
Then $\exists k( f(k) > 0)$ if and only if $0 \in \ci(F_f)$. \end{proof}
\begin{remark}{\em $^2E$ is sometimes denoted  as $\exists^2$, and is equivalent, within S1 - S9, to Feferman's $\mu$.}\end{remark}
The Suslin functional $\su$ is defined by 
\[\su(f) = \left\{ \begin{array}{ccc} 0 & {\rm if} & \forall g \exists n (f(\bar g(n)) = 0)\\ 1 & {\rm if} & \exists g \forall n (f(\bar g(n))> 0)\end{array} \right.\]
\begin{lemma} The Suslin functional $\su$ is computable in $\ci$. \end{lemma}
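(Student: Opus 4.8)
The plan is to show that the Suslin functional $\su$ is computable in $\ci$ by turning the quantifier over Baire space into a non-monotone inductive definition that builds up the well-founded part of a tree. Recall that, given $f \in \N^\N$ coding a tree $T_f = \{s : f(s) = 0\}$ (under the usual coding of finite sequences), we have $\su(f) = 0$ precisely when $T_f$ is \emph{well-founded}, i.e.\ has no infinite branch $g$ with all initial segments in the tree, and $\su(f) = 1$ when $T_f$ is \emph{ill-founded}. The standard way to detect well-foundedness by an inductive process is to repeatedly mark the leaves (nodes all of whose immediate successors are already marked) until no new node gets marked; the tree is well-founded iff eventually the empty sequence $\langle\rangle$ gets marked. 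This ``marking'' is exactly a non-monotone (in fact monotone, but that is fine) induction, and $\ci$ computes its fixed point.

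Concretely, I would fix a coding of finite sequences by naturals and define, for each $f$, the functional
\[
F_f(A) = A \cup \{\, s : f(s) = 0 \text{ and } \forall i\, (s\what\langle i\rangle \in A \text{ or } f(s\what\langle i\rangle) \neq 0) \,\}.
\]
In words, a node $s$ of the tree is added once every immediate successor of $s$ that lies in the tree has already been added. Then the stages $f^{F_f}_\beta$ enumerate the well-founded part of $T_f$ from the leaves upward, and $\ci(F_f)$ is the full well-founded part. Hence $\langle\rangle \in \ci(F_f)$ if and only if the root is in the well-founded part, which happens exactly when $T_f$ is well-founded, i.e.\ iff $\su(f) = 0$. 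So
\[
\su(f) = 1 \dminus [\,\langle\rangle \in \ci(F_f)\,],
\]
and the right-hand side is computable in $\ci$ by evaluating $\ci(F_f)$ at the code of $\langle\rangle$. To present this as a genuine Kleene computation relative to $\ci$, I would exhibit, using the $\su_{n,m}$-theorem and the recursion theorem, an index $d$ with $\{d\}(\ci, f, a) = G(\cdots)$ matching the shape required in subcase~3 of S8, so that the application scheme produces $\ci(F_f)$ evaluated at $\langle\rangle$; the membership test and the final arithmetic are then handled by the remaining schemes together with $^2E$ (already shown computable in $\ci$) to evaluate the bounded-then-unbounded quantifier $\forall i$ in the definition of $F_f$.

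The conceptual content — reducing $\Pi^1_1$ well-foundedness to a single inductive fixed point — is routine, so the only genuine care needed is verifying that the predicate defining $F_f(A)$ is itself computable in the available data. The clause $\forall i\,(s\what\langle i\rangle \in A \vee f(s\what\langle i\rangle) \neq 0)$ is a quantifier over all immediate successors, and I expect the main (though minor) obstacle to be checking that this is decidable within our schemes: it is, because it is a $\Pi^0_1$ predicate in $f$ and the characteristic function of $A$, hence decidable from $^2E$, which is computable in $\ci$. Once that is in place the equivalence $\langle\rangle \in \ci(F_f) \Leftrightarrow \su(f)=0$ follows by a straightforward transfinite induction on the stages, establishing the lemma.
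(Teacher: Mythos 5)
Your proposal is correct and is essentially the paper's own proof: both reduce $\su(f)$ to well-foundedness of the tree coded by $f$ and then compute the well-founded part (nodes not extendible to an infinite branch) as the fixed point of an arithmetical, leaves-upward inductive definition, with $^2E$ (already known computable in $\ci$) handling the quantifier in each induction step. The only cosmetic difference is how the answer is read off at the end --- you test whether the root lies in $\ci(F_f)$, while the paper uses $^2E$ to test whether the fixed point is the whole tree --- and both share the same harmless sign convention in identifying which tree's well-foundedness corresponds to $\su(f)=0$.
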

\begin{proof}
We use that $^2E$ is computable in $\ci$. Given $f$, we let $T_f$ be the tree of finite sequences $s = (s_0 , \ldots , s_{n-1})$ such that we for all $m \leq  n$ have that $f(\langle s_0 , \ldots , s_{m-1}\rangle) = 0$. Then $\su(f) = 0$ if and only if $T_f$ is well founded. For all $f$, the subset of $T_f$ consisting of all sequences that cannot be extended to an infinite branch in $T_f$ can be defined  using an arithmetical inductive definition, and we then use $^2E$ to decide if this subset   is the whole tree $T_f$.\end{proof}
\subsection{Totality vs. partiality}\label{2.2}
In the original definition of higher order computability via Kleene's S1 - S9, all objects were assumed to be total. This can be considered to be a defect of S8, where the input $\lambda \xi.\{d\}(\xi , ----)$ has to be defined for all $\xi$ of the type in question in order to accept the termination of $\Psi(\lambda \xi.\{d\}(\xi, ----))$, even if $\Psi$ is defined in such a way that it only requires some values of the input functional. In the case of $\ci$, we only need $F$ to be total on the set of functions $f_\beta$ for $\beta \leq \alpha_F$ in order to identify $\ci(F)$.
\begin{remark}{\em A similar phenomenon takes place for Gandy's Superjump $\mathbb{S}$, introduced in \cite{Gandy}. The superjump is defined by
\[\mathbb{S}(F,e) = \left\{\begin{array}{ccc} 1 & {\rm if} & \{e\}(F,e) \downarrow\\ 0 & {\rm if} & \{e\}(F,e) \uparrow \end{array}\right.\]  where $\downarrow$ means that there is a value of the computation, while $\uparrow$ means the converse. In order to find the value of $\mathbb{S}(F,e)$ we only need to know $F$ restricted to the set of $f$ computable in $F$, the so called \emph{1-section} of  $F$. This was  used by Harrington \cite{H72} in an essential way when he classified the computational strength of $\mathbb{S}$, and was also important in Hartley's \cite{Hartley} analysis of the countably based functionals ( See Section \ref{3.} for a further discussion). We will show that loosening up the requirement of totality of the input functional to $\ci$ does not add to the computational strength of $\ci$. This is as it is for $\mathbb{S}$, but not, for instance, as for computations with continuous inputs in general. Then we add considerable strength by relaxing on S8, see e.g.  \cite[Sections 6.4 and 8.5]{LN} for results and further references.}\end{remark}
\begin{definition}\label{Part}
{\em We write $\{e\}_t(\ci,\vec F, \vec f , \vec a ) = b$ if $\{e\}(\ci,\vec F, \vec f , \vec a) = b$ according to the original definition, while we write $\{e\}_p(\ci ,\vec F,  \vec f , \vec a) = b$ if we interpret S8 according to the following extension of $\ci$ to partial $F:\Ca \rightarrow \Ca$. We will not accept non-total inputs to $F$, and for each $f \in \Ca$ we either have that $F(f) \in \Ca$ or totally undefined. We stick to the notation from Section \ref{2.1}:
\begin{itemize}
\item[i)] By recursion on $\beta$, $f^F_\beta$ is {\em defined }  if $\beta = 0$ or $\beta > 0$ and both $f^F_\gamma$ and $F(f^F_\gamma)$ are defined for all $\gamma < \beta$.
\item[ii)] $\ci(F)$ is \emph{defined }if there is an ordinal $\alpha$ such that $f^F_{\alpha +1 }$ is defined and $f^F_\alpha = f^F_{\alpha + 1}$
\item[iii)] If $\ci(F)$ is defined, and $\alpha$ is  as in ii), $\ci(F)(n) = f^F_\alpha(n)$ for each $n \in \N$.
\end{itemize}}\end{definition}
When the context is clear, we will talk about $t$-computations and $p$-computations.
\begin{theorem}
There is a computable (in the sense of Turing)  function $\rho$ such that if $\{e\}_p(\ci,\vec F, \vec f , \vec a) = b$, then $\{\rho(e)\}_t(\ci , \vec F, \vec f , \vec a) = b$.
\end{theorem}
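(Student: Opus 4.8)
The plan is to argue by induction on the length (ordinal rank) of the $p$-computation, simultaneously defining the index transformation $\rho$ by the recursion theorem. Since every $t$-computation is in particular a $p$-computation, only the inclusion in the stated direction is at stake, and for all schemes except the application of $\ci$ the two interpretations of S1--S9 coincide once the immediate subcomputations have been transformed; there $\rho$ simply distributes over the subindices (e.g.\ $\rho(\langle 4,e_1,e_2\rangle)=\langle 4,\rho(e_1),\rho(e_2)\rangle$, and likewise for S6, S9, and the type-$2$ subcase of S8), and the induction hypothesis supplies the matching values. Thus the whole theorem reduces to the single case $e=\langle 8,3,d\rangle$.

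In that case the $p$-computation returns $\ci(F_G)(b)$, where $G(f)=\{d\}_p(\ci,\vec F,f,\vec f,\vec a)$ and $F_G$ is evaluated in the partial sense of Definition \ref{Part}. The first observation is that $\ci(F_G)$ depends only on the behaviour of $F_G$ along the chain $f^{F_G}_0\leq f^{F_G}_1\leq\cdots\leq f^{F_G}_{\alpha}=\ci(F_G)$, and that by Definition \ref{Part}(i) the functional $F_G$ is totally defined at every $f^{F_G}_\beta$ with $\beta\leq\alpha$. Hence if I can produce a \emph{total} functional $\hat F\colon\Ca\to\Ca$ that is $t$-computable from $(\ci,\vec F,\vec f,\vec a)$ and agrees with $F_G$ at each chain element, then a routine induction on $\beta$ shows that $\hat F$ generates exactly the same chain and closes at the same ordinal $\alpha$, so $\ci(\hat F)=\ci(F_G)$, now computed in the total sense. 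The value off the chain is irrelevant, since the chain only ever queries $\hat F$ at its own members, so I am free to let $\hat F$ take the default value $\emptyset$ wherever $F_G$ is not total. I then set $\{\rho(e)\}_t(\ci,\vec F,\vec f,b,\vec a):=\ci(\hat F)(b)$ via an application of S8 (case $3$) to a total index computing $\hat F$.

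The real work, and the main obstacle, is to build such an $\hat F$ totally and $t$-computably: concretely, to decide --- uniformly in $f$ --- whether $F_G(f)$ is total, i.e.\ whether $\{d\}(\ci,\vec F,n\what f,\vec f,\vec a)$ converges for every $n$, and to return its value when it does. This is a halting question for computations relative to $\ci$, and neither $^2E$ nor $\su$ suffices to answer it, since $\ci$-computations reach far beyond the hyperarithmetic-in-$\su$ range. The resolution is to exploit that convergence is itself given by the positive inductive definition underlying S1--S9: I internalise this definition as a single monotone induction that generates codes for all convergent computation instances (indexing the type-$1$ arguments, which are themselves $\ci$-computable from the oracles, by their indices, so that the whole induction runs over $\N$), and run it through one application of $\ci$. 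By the induction hypothesis on rank, every nested $\ci$-application occurring inside $G$ has already been replaced by its total $t$-computable version, so the operator driving this generating induction is genuinely $t$-computable from $(\ci,\vec F,\vec f,\vec a)$, and its closure decides convergence: an instance is generated exactly when the corresponding subcomputation converges. Reading off the closure lets me define $\hat F(f)$ as above. It is precisely this ability of $\ci$ to detect convergence of its own subcomputations through an internal induction --- the analogue of the superjump's built-in halting test used by Harrington --- that makes totalisation possible and that would fail for a mere type-$2$ oracle.

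Finally, I would check that the recursion defining $\rho$ is legitimate: the transformation calls itself only on the subindex $d$ and on the indices of the subcomputations folded into the convergence induction, all of strictly smaller rank, so the recursion theorem produces a single total Turing-computable $\rho$, and a simultaneous induction on the rank of the $p$-computation verifies that $\{\rho(e)\}_t(\ci,\vec F,\vec f,\vec a)=\{e\}_p(\ci,\vec F,\vec f,\vec a)$ throughout.
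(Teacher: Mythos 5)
Your overall skeleton matches the paper's: define $\rho$ by the recursion theorem, induct on the rank of the $p$-computation, note that only the case $e=\langle 8,3,d\rangle$ is at issue, and there replace the partial induction $F_G$ by a total operator agreeing with it along its chain (your observation that agreement on the chain forces $\ci(\hat F)=\ci(F_G)$ is correct). The gap is in how you produce the total operator. You propose to decide, uniformly in $f$, whether $F_G(f)$ is total, by internalising the inductive definition of convergence as a monotone induction over $\N$ and running it through one application of $\ci$ to a total, $t$-computable operator. This cannot work. If the closure of such an induction were (a code for) the set of convergent computation instances, that set would be $\ci$-decidable, since the closure is the output of a single terminating $t$-computation $\{\langle 8,3,d_\Gamma\rangle\}_t(\ci,\ldots)$. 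But the halting problem for $\ci$-computations is not $\ci$-decidable: the paper notes that S1--S9 relative to $\ci$ satisfy the $\textsf{S}_{n,m}$- and recursion theorems, so the classical diagonal argument applies verbatim (take a universal $d$, let $\{e_1\}(\ci,e)$ converge exactly when the alleged decision procedure declares $\{e\}(\ci,e)$ divergent, and consider $e=e_1$). Equivalently, in the paper's own terms: by Lemma \ref{lemma.compare} any total operator computable in $\ci$ has closure ordinal $<\pi$, whereas the chains your induction must rebuild internally, one step per stage, have lengths unbounded below $\pi$. So the operator driving your convergence induction cannot be both total and $t$-computable in $\ci$. The induction hypothesis that nested $\ci$-applications have been $\rho$-translated does not rescue this: $G_t=\{\rho(d)\}_t$ is still partial, and divergence can come from non-well-founded S4/S9 recursion, which has nothing to do with untranslated $\ci$-calls. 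Your appeal to Harrington and the superjump also inverts the situation: $\mathbb{S}$ has a halting test built in as its defining oracle power, for computations relative to a type-2 $F$, and even $\mathbb{S}$ cannot decide halting of computations relative to itself; $\ci$ has no built-in halting test at all.

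What is actually needed is much less than a halting test, and this is exactly what the paper's proof supplies. One does not need to know where $F_{G_t}$ converges; one only needs a total operator whose $\ci$-closure codes the chain of $F_{G_t}$. The paper achieves this by taking prewellorderings as inputs: $H(R)$ compares $R$, by recursion along $R$ (so the comparison always terminates), with the genuine chain $A_0\subseteq A_1\subseteq\cdots$, querying $F_{G_t}$ only at sets $A_\beta$ already verified to lie on the chain --- where Definition \ref{Part} guarantees convergence --- and truncating $R$ at the first disagreement; whether an arbitrary input is a prewellordering at all is decided by $\su$, which is $t$-computable in $\ci$. Totality of $H$ thus comes from using the well-foundedness of the input as a clock, never from deciding convergence off the chain. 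Then $\ci(H)$ is the prewellordering of the chain, and $b\in\ci(F_{G_t})\Leftrightarrow b\in\mathrm{dom}(\ci(H))$. Your argument would be repaired by replacing the convergence induction with this (or an equivalent) clocked comparison construction.
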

\begin{proof}
We use the recursion theorem to define $\rho$, and define it by cases according to S1 - S9. It is obvious what to do in all cases except application of $\ci$. The final correctness proof will, of course, be by induction on the complexity of the $\{e\}_p$-computation (we will define the \emph{rank} or \emph{norm} of a terminating computation formally below, definitions that do not rely on the correctness of this theorem), but as is common for this kind of argument, we assume that $\rho$ does the job on all subcomputations, and we define $\rho$ by self-reference. 
\newline
So assume that \[\{e\}_p(\ci , \vec F, \vec f , b , \vec a) = \ci(F_{G_p})(b)\] where $G_p(f) = \{d\}_p(\ci , \vec F, f , \vec f ,\vec  a)$, and that the recursion terminates
 as defined   in Definition \ref{Part}.  Assume further, as an induction hypothesis, that we can replace $G_p$ with the, still partial,
 \[G_t(f) = \{\rho(d)\}_t(\ci ,\vec F, f , \vec f , \vec a).\]

\medskip

We assume familiarity with the concept of a \emph{prewellordering} $R$ on a domain $D \subseteq \N$. Since $^2E$ and $\su$ are $t$-computable in $\ci$, we also have that the set of prewellorderings will be $t$-computable in $\ci$.
\newline
If $R$ is a prewellordering on $D$, each element in the domain $D$ will have an ordinal rank, and we let $R_\beta$ be the elements in $D$ with ordinal rank below $\beta$. We will construct a total functional $H$ mapping prewellorderings to prewellorderings such that we can decide $b \in \ci(F_{G_t})$ from $\ci(H)$. The definition of $H(R)$ is as follows, observing that we only need $^2E$ when we know that $R$ is a prewellordering. We let   $f_\beta$  be as in the definition of $\ci(F_{G_t})$, and we \emph{identify} $f_\beta$ with $A_\beta = \{b \in \N : f_\beta(b) = 1\}$.
\medskip

- By $R$-recursion, compare $R_\alpha$ with $A_\alpha$ until we  either have disagreement or that $R_\alpha = A_\alpha$ with $F(A_\alpha) \subseteq A_\alpha$.
\smallskip

- In the first case,  $\alpha$ must be a successor ordinal $\beta + 1$. We let $H(R)$ be $R$ restricted to $R_\beta  = A_\beta$, and then  end-extended with  $F(A_\beta) \setminus A_\beta$. In the other case  we let $H(R)$ be $R$ restricted to $R_\alpha$. 
\medskip

Since we in the computation of $H(R)$ only will ask for values $F(A_\beta)$ , our assumption shows that $H$ is total. $\ci(H)$ will be a prewellordering $R$, and we will have that it matches the prewellordering induced by $F_{G_t}$. We then have that
\[b \in \ci(F) \Leftrightarrow b \in dom(\ci(H)).\]
It is now a  matter of routine to define a suitable candidate for $\rho(\langle 8,d\rangle   )$  in a computable way from $d$ and an alleged index for $\rho$, so we may define a working $\rho$ by the classical recursion theorem.
\end{proof}

 From now on, if we write $\{e\}$, then we mean $\{e\}_p$.
\subsection{The norm of a computation and Gandy Selection}\label{2.3}
The advantage of using $p$-computations is that now all computation trees will be countable, and all computations will have a countable ordinal as rank. We give a direct definition of this rank. In order to simplify the readability we introduce the following as a convention: With the expression $\lambda (g,c).\{d\}(\ci, \vec F, g , \vec f , c , \vec a)$ we really mean the function \begin{center} ($\ast$) \;\;\;\;$F(g)(c) = \min\{ 1 , \{d\}(\ci, \vec F, c \what g , \vec f , \vec a)\}$.\end{center}
\begin{definition}{\em Let $C_{\ci}$ be the set of finite sequences $\langle e , \vec F, \vec f , \vec a\rangle$ such that for some $b$ we have 
\[\{e\}(\ci , \vec F, \vec f , \vec a) = b.\]
If $\langle e , \vec F, \vec f , \vec a\rangle \in C_{\ci}$ we define the \emph{norm} $||\langle e , \vec F, \vec f , \vec a \rangle ||$ by transfinite recursion as follows:
\begin{itemize}
\item[i)] If $e$ corresponds to S1 - S3 or S7, we let the norm be zero.
\item[ii)] If $\{e\}(\ci , \vec F, \vec f , \vec a) = \{e_1\}(\ci , \vec F, \vec f , \{e_2\}(\ci , \vec F, \vec f , \vec a) , \vec a)$, where $ \{e_2\}(\ci , \vec F, \vec f , \vec a) = c$, we let 
\[||\langle e , \vec F, \vec f , \vec a\rangle|| = \max\{||\langle e_2 , \vec F, \vec f , \vec a \rangle||, ||\langle e_1 , \vec F, \vec f , c , \vec a \rangle||\} + 1.\]
The cases  S6 and S9 are handled in a similar way, and are left for the reader. 
\item[iii)] If $\{e\}(\ci , \vec F , \vec f , \vec a) = F_1(g)$ where $g(b) = \{d\}(\ci , \vec F , \vec f , b , \vec a)$ we let \[||\langle e,\vec F , \vec f , \vec a \rangle|| = \sup \{||\langle d , \vec F , \vec f , b , \vec a\rangle|| + 1: b \in \N\}\]
\item[iv)] If $\{e\}(\ci , \vec F, \vec f , b,\vec a) = \ci(\lambda (g,c).\{d\}(\ci ,\vec F, g,\vec f , c , \vec a))(b)$,
we let $F$ be as in $(\ast)$, and we let $\alpha$ and $f_\beta$ for $\beta \leq \alpha$  be as in the definition of $\ci$.  By the assumption, $f_\beta$ is well defined and total for all $\beta \leq \alpha$, where $F(f_\alpha) \leq f_\alpha$. 
We let 
\[||\langle e , \vec F,\vec f , b, \vec a\rangle|| = \sup \{||\langle d , \vec F, c \what f_\beta , \vec f  , \vec a \rangle|| + 1 : \beta \leq \alpha \wedge c \in \N\}.\]
\end{itemize}
If $\langle e , \vec F, \vec f , \vec a\rangle \not \in C_{\ci}$ we let $||\langle e , \vec F, \vec f , \vec a \rangle || = \aleph_1$, the first uncountable ordinal.
}\end{definition}
\begin{lemma}[Stage Comparison]\hspace*{2mm}\label{SC}
\newline
There is a a partial functional $P$ in two variables, $p$-computable in $\ci$, such that 
\begin{itemize}
\item[i)] $P(\langle e ,\vec F, \vec f , \vec a\rangle,\langle d ,\vec G,  \vec g , \vec c\rangle)$ terminates if at least one of $\langle e , \vec F, \vec f , \vec a\rangle$ and $\langle d ,\vec G,  \vec g , \vec c\rangle$ is in $C_{\ci}$ and then
\item[ii)] $P(\langle e , \vec F, \vec f , \vec a\rangle,\langle d ,\vec G,  \vec g , \vec c\rangle) = 1$ if $||\langle e ,\vec F,  \vec f , \vec a\rangle|| \leq ||\langle d , \vec G, \vec g , \vec c\rangle||$
\item[iii)] $P(\langle e , \vec F, \vec f , \vec a\rangle,\langle d , \vec G, \vec g , \vec c\rangle) = 0$ if $||\langle d , \vec G, \vec g , \vec c\rangle|| < ||\langle e , \vec F, \vec f , \vec a\rangle||$.

\end{itemize}\end{lemma}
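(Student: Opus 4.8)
The plan is to realise $P$ as the Kleene computation, relative to $\ci$, of a \emph{stage comparison relation} defined by the usual ordinal bookkeeping. Writing $c = \langle e, \vec F, \vec f, \vec a\rangle$ for a computation tuple and abbreviating $c_1 \preceq c_2$ for $||c_1|| \le ||c_2||$, the engine of the whole argument is the elementary ordinal identity: if $\lambda = \sup_i(\sigma_i + 1)$ and $\mu = \sup_j(\tau_j + 1)$, then $\lambda \le \mu$ iff $\forall i\,\exists j\,(\sigma_i \le \tau_j)$, with the convention that an empty $\sup$ is $0$. Since, by inspection of the norm definition, every convergent tuple satisfies $||c|| = \sup\{||c'|| + 1 : c' \text{ an immediate subcomputation of } c\}$, this yields the recursive characterisation
\[ c_1 \preceq c_2 \iff \forall i\,\exists j\ \big(c_1^i \preceq c_2^j\big), \]
where $c_1^i$ and $c_2^j$ range over the immediate subcomputations of $c_1$ and $c_2$ respectively. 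I would take this equivalence as the defining equation for $P$: set $P(c_1, c_2) = 1$ when the right-hand side holds and $P(c_1, c_2) = 0$ otherwise, and obtain an index for $P$ from the classical recursion theorem.

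The work is to show that the right-hand side can be evaluated by a $p$-computation in $\ci$. First I would, computably in $\ci$, read off the scheme of each of $c_1, c_2$ from its index and produce an $\N$-enumeration of its immediate subcomputations. For S1--S3 and S7 there are none; for S4, S6 and S9 there are finitely many; for application of a type-$2$ functional the subcomputations are indexed by $b \in \N$. The only delicate case is application of $\ci$ (clause iv of the norm definition), where the subcomputations are indexed by the stages $f_\beta$, $\beta \le \alpha$, of the inner induction together with a numerical parameter. To enumerate these I would reuse the prewellordering device from the proof of the preceding totality theorem: apply $\ci$ to the functional $H$ sending prewellorderings to prewellorderings, so that $\ci(H)$ is exactly the prewellordering on $\ci(F)$ induced by the stages; this yields, uniformly and computably in $\ci$, an $\N$-listing of the stages $f_\beta$ and hence of the subcomputations. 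Given such listings, the matrix entry $c_1^i \preceq c_2^j$ is $P(c_1^i, c_2^j)$, the two quantifiers $\forall i\,\exists j$ range over $\N$, and since $^2E$ is computable in $\ci$ the arithmetical quantifier combination over this $\ci$-computable matrix can be decided once each inner call $P(c_1^i, c_2^j)$ is known to terminate.

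Termination and correctness I would prove simultaneously by transfinite induction on $\nu(c_1, c_2) = \min(||c_1||, ||c_2||)$, which is a countable ordinal precisely when at least one of $c_1, c_2$ lies in $C_{\ci}$. The step uses two facts: every immediate subcomputation of a convergent tuple is itself convergent, so if at least one of $c_1, c_2$ converges then for every relevant pair $(i, j)$ at least one of $c_1^i, c_2^j$ converges and the recursive call is legitimate; and $||c_1^i|| < ||c_1||$, $||c_2^j|| < ||c_2||$ force $\nu(c_1^i, c_2^j) < \nu(c_1, c_2)$, so the induction hypothesis gives the correct value of each inner call. Feeding these into the $^2E$-decision of $\forall i\,\exists j$ then gives both termination of $P(c_1, c_2)$ and the equivalence $P(c_1, c_2) = 1 \iff ||c_1|| \le ||c_2||$, which unwinds to clauses (i)--(iii): when both converge the value is the genuine comparison, and when exactly one converges its countable norm beats the divergent norm $\aleph_1$, giving $1$ or $0$ as required.

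The main obstacle is the interaction between the enumeration of subcomputations and the divergent side. When, say, $c_1$ is an application of $\ci$ whose inner induction \emph{diverges} while $c_2$ converges, the stages do not form a completed set and the prewellordering device cannot be run to completion; yet $P(c_1, c_2)$ must still halt with value $0$. The resolution is that the stages must be generated \emph{lazily} and interleaved with the comparisons: at each stage $f_\beta$ one tests the numerically-parametrised subcomputations against the convergent $c_2^j$ \emph{before} attempting to compute $F(f_\beta)$ and pass to $f_{\beta + 1}$. A divergent stage transition is exactly a divergent subcomputation $c_1^{i_0}$, and the call $P(c_1^{i_0}, c_2^j)$ terminates with $0$ because $c_2^j$ converges; this exposes the refuting index $i_0$ and lets $P(c_1, c_2)$ return $0$ without ever completing the inner induction. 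Making this interleaving precise within the S1--S9 formalism, so that the search for a refuting $i$ and the generation of stages proceed in lock-step and neither waits on a divergent branch of the other, is the step that requires the most care; it is here that the restriction to $p$-computations, under which all computation trees are countable, is essential.
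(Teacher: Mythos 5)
Your proposal is correct and, in substance, it is the paper's proof: the paper also builds $P$ by the recursion theorem, decides each comparison by a lock-step interleaved simulation in which stages of an inner induction are generated only after recursive calls to $P$ (combined with $^2E$) have certified that the relevant subcomputations converge, packages each such interleaved transfinite process as an inductive definition handed to $\ci$, and proves termination and correctness by induction on the smaller of the two norms. The difference is organizational. You run everything through the single identity $\sup_i(\sigma_i+1)\le\sup_j(\tau_j+1)\iff\forall i\,\exists j\,(\sigma_i\le\tau_j)$, whereas the paper splits the definition of $P$ into $9\times 9$ cases according to the schemes of the two tuples and writes out three representative ones, (S4, S8.3), (S8.2, S8.3) and (S8.3, S8.3). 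Your formulation is cleaner and makes the correctness argument uniform; the paper's case split makes the per-scheme bookkeeping explicit, and that explicitness matters at one point where your text overclaims: you assert that the only delicate enumeration of immediate subcomputations is the $\ci$-application case, but composition (S4) has the same defect, since its second immediate subcomputation $\langle e_1,\vec F,\vec f,c,\vec a\rangle$ is identifiable only after the inner computation has converged to the value $c$, so a divergent composition tuple cannot have its subcomputations listed in advance either. Your lazy-interleaving principle (``neither side waits on a divergent branch of the other'') repairs this exactly as it repairs the $\ci$-case, and this is precisely what the paper's (S4, S8.3) case does: it compares the inner computation against the growing list of stages of the other side's induction, and only once that comparison certifies convergence does it compute the value $c$ and move on to comparing the outer computation. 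So the repair must be applied to S4 (and, trivially, to ill-formed indices, which the paper dismisses as the extra ``100th'' cases) and not only to S8.3; with that understood, your argument goes through as the paper's does.
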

\begin{proof}We use the recursion theorem to construct $P$, and the definition is split into 81 cases, according to the schemes corresponding to $e$ and $d$. S8 splits into two cases, S8.2 and S8.3 for applications of $F_1$ and $\ci$, while S5 is redundant and left out. This is why we have $9 \times 9$ cases. Strictly speaking there are 100 cases, because we must say what $P$ does in cases where one or both indices do not correspond to Kleene-indices at all, but we leave these trivial cases for the reader. Fortunately, many other cases are trivial as well, in particular those where one of the indices $e$ or $d$ represents a basic computation S1 - S3 or S6. Moreover, all cases not involving S8.3 are covered by the literature, see e.g. \cite{Gandy}.
\smallskip

 We will give the details for three cases (S4 , S8.3), (S8.2 , S8.3)  and (S8.3 , S8.3). The remaining cases follow by similar, or even simpler,  arguments. As is normal practise for this kind of construction/proof we define $P$ by self reference, assuming for each case, as an induction hypothesis, that $P$ works for the immediate subcomputations. 

\medskip

\noindent Case (S4 , S8.3): Let \[\{e\}(\ci, \vec F , \vec f , \vec a) = \{e_1\}(\ci , \vec F, \vec f , \{e_2\}(\ci , \vec F, \vec f , \vec a) , \vec a)\] and let \[\{d\}(\ci , \vec G,\vec g , b,\vec c) = \ci((G)(b),\]
where $G = \lambda(g,c).\{d_1\}(\ci , \vec G , c \what g,\vec g  , \vec c))$.

Let  $g_\alpha$ be element $\alpha$ in the sequence inductively defined from $G$.  We now consider the following induction, that can easily be formalised via an inductive definition:
\smallskip

Use $P$ to compare $||\langle e_1 , \vec F , \vec f , \vec a\rangle||$ with the ranks needed to compute $g_0$, $g_1$ , $\ldots$ until the first is bounded in norm by one of the latter computations or until the latter induction terminates.
\smallskip

In the first case, let $c = \{e_2\}(\ci , \vec F,\vec f , \vec a)$ and start over again, now comparing the computations involved in computing the $g_\alpha$'s with $||\langle e_1 , \vec F,\vec f , c , \vec c\rangle||$. 
\smallskip

If $||\langle e , \vec F, \vec f , \vec a\rangle|| \leq ||\langle d , \vec G, \vec g , \vec c\rangle||$, this will be verified through the two inductions, and the composition will terminate at least as fast as the induction. If $||\langle d , \vec G, \vec g , \vec c\rangle|| < ||\langle e ,\vec F,  \vec f , \vec a\rangle||$, at least one of the two inductions will result in the full induction induced by $G$, and we can deduce that this terminates faster than the composition. 
\smallskip

\noindent Case (S8.2 , S8.3): Let \[\{e\}(\ci , \vec F,\vec f , \vec a) = F_1(f)\] where \[f(a) = \{e_1\}(\ci , \vec F , \vec f , a , \vec a)\] and let \[\{d\}(\ci , \vec G,\vec g , b,\vec c) = \ci(G)(b)\] where $G$ is as in the previous case.

As in the previous case, we simulate the induction in the second part while, at each step, comparing the length of the computations needed with those of each $ \{e_1\}(\ci , \vec F , \vec f , a , \vec a)$. We use $^2E$ in doing this. If we for each $a$ reach a step in the induction where we need a computation that dominates the computation of  $ \{e_1\}(\ci , \vec F , \vec f , a , \vec a)$, we know that the left hand side will terminate at most with the same rank as the right hand side. If we are able to complete the induction on the right hand side before termination of all sub-computations on the left hand side, we know that the right hand side terminates first.  This stepwise comparison until the value of $P$ is settled can be expressed as an inductive definition.
\smallskip

\noindent Case (S8.3 , S8.3):  Let \[\{e\}(\ci ,\vec F, \vec f , a , \vec a) = \ci(\lambda (a',f').\{e_1\}(\ci , \vec F, a' \what f',\vec f, \vec a))(a)\] and let \[\{d\}(\ci ,\vec G, \vec g , b , \vec b) = \ci(\lambda (b',g').\{d_1\}(\ci , \vec G , b' \what g',\vec g, \vec b))(b).\] 
Notice that the norms  of these computations will be independent of the choices of $a$ and $b$. Let $F$ and $G$ be the partial functionals involved in these inductions, where at least one is sufficiently total for the induction to terminate. We now describe a simultaneous inductive definition of two increasing sequences $f_\alpha$ and $g_\beta$ of elements of $\Ca$, where we use $^2E$ and $P$ to make all the comparisons involved:
\begin{itemize}
\item[*] Let $f_0 = g_0$ be the constant zero.
\item[*] Assume that $f_0 , \ldots , f_\alpha$ and $g_0 , \ldots , g_{\beta}$ are constructed.
\item[*]
Consider all computations involved in computing all $f_\delta(a)$ for $\delta \leq \alpha$ and in computing $F(f_\alpha)(a)$ for all $a$, and then consider all computations involved in computing all $g_\gamma(b)$ for $\gamma \leq \beta$ and in computing $G(g_\beta)(b)$ for all $b$.
\item[*]If the norm of each computation in the first set is bounded by the norm of some computation in the second set, we add $f_{\alpha + 1} = \max\{f_\alpha , F(f_\alpha)\}$ and keep $g_0 , \ldots g_{\beta}$.
\item[*]
On the other hand, if there is one computation in the first set whose norm strictly bounds all norms of the computations in the other set, we add $g_{\beta + 1} = \max\{g_\beta , G(g_\beta)\}$ and keep $f_0 , \ldots , f_\alpha$.
\item[*]
At least one of these two inductions will terminate through this process, and when it does, we know which one will terminate with lowest ordinal norm.
\end{itemize}
We leave the formal definition of this inductive definition  for the reader.
\end{proof}

\bigskip

\begin{theorem}[Gandy Selection]
There is a $p$-computable \emph{selection operator} $\nu$ such that for all $e$, $\vec F$, $\vec f$ and $\vec a$ we have 
\[\exists n \{e\}(\ci , \vec F,  \vec f , n , \vec a) \!\!\downarrow\;\; \Rightarrow\; \{e\}(\ci , \vec F, \vec f , \nu(e, \vec F, \vec f , \vec a) , \vec a)\!\!\downarrow.\]
\end{theorem}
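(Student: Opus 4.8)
The plan is to reduce the construction of $\nu$ to a non-monotone inductive definition that races, in parallel, all the computations $\{e\}(\ci,\vec F,\vec f,n,\vec a)$ indexed by $n\in\N$, and to read off a witness from the first (least-norm) completion. Throughout I will use that $^2E$ and the stage-comparison functional $P$ of Lemma \ref{SC} are both $p$-computable in $\ci$. Recall the convention that a divergent computation has norm $\aleph_1$; hence for all $n,m$ the call $P(\langle e,\vec F,\vec f,n,\vec a\rangle,\langle e,\vec F,\vec f,m,\vec a\rangle)$ terminates as soon as at least one of the two computations converges, and when it terminates it returns $1$ exactly when $||\langle e,\vec F,\vec f,n,\vec a\rangle||\leq||\langle e,\vec F,\vec f,m,\vec a\rangle||$. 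In particular a convergent index is always weakly preferred to a divergent one, and the only way a single $P$-call can fail to terminate is that both indices diverge.

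Assume $\exists n\,\{e\}(\ci,\vec F,\vec f,n,\vec a)\!\downarrow$. Then the set of convergent indices is nonempty, the quantity $\mu=\min_n||\langle e,\vec F,\vec f,n,\vec a\rangle||$ is a countable ordinal, and I take as the intended value $n^{\ast}$ the least $n$ attaining this minimal norm. The key characterisation is that a convergent $n$ is a minimal-norm witness iff $\forall m\,P(\langle e,\vec F,\vec f,n,\vec a\rangle,\langle e,\vec F,\vec f,m,\vec a\rangle)=1$: when $\{e\}(\ci,\vec F,\vec f,n,\vec a)\!\downarrow$ every $P$-call in this universal statement terminates (its first argument converges), so $^2E$ evaluates the quantifier and the whole test is a convergent $\ci$-computation returning the correct truth value. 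Thus, \emph{provided we are handed even one convergent index} $\rho$, we are done: comparing against $\rho$ is total in $n$, the set $S_\rho=\{n:P(\langle e,\vec F,\vec f,n,\vec a\rangle,\langle e,\vec F,\vec f,\rho,\vec a\rangle)=1\}$ is a computable, nonempty set consisting entirely of convergent indices and containing every minimal-norm witness, and over $S_\rho$ the displayed test terminates for every member, so $^2E$ lets us extract the least minimal-norm element, which is $n^{\ast}$.

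The remaining, and genuinely delicate, step is to locate a first convergent index without ever running a divergent computation to a halt — a naive search $\mu n.[\cdots]$ blocks at the least $n<n^{\ast}$ whose computation diverges. Here I would imitate the simultaneous inductive definition used in case (S8.3, S8.3) of the proof of Lemma \ref{SC}, but carried out over all $n\in\N$ at once: using the recursion theorem together with $^2E$ and $P$, I set up a non-monotone inductive definition, to be run by $\ci$, whose stages advance the partial computations $\{e\}(\ci,\vec F,\vec f,n,\vec a)$ for all $n$ in lock-step ordered by their norms, marking an index as \emph{completed} at the stage matching its norm. Because stage comparison manipulates only partial computations and never requires completing any single branch, no divergent $n$ can stall the induction; and because some index converges, the induction reaches the least stage $\mu$ at which a completion is marked. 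At that stage the set of completed indices is exactly the nonempty set of minimal-norm witnesses, from which the least element $n^{\ast}$ is read off by $^2E$.

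Finally, packaging the whole procedure by the recursion theorem yields the claimed $p$-computable operator $\nu(e,\vec F,\vec f,\vec a)$, and its correctness is verified, as usual, by induction on the norm of the computations involved. The main obstacle is precisely the termination analysis of the parallel induction: one must check that the inductive definition really is well-defined (each stage only queries already-available, convergent partial data) and that its closure behaviour detects completion at the least norm rather than overshooting or blocking. The role of Lemma \ref{SC} is to guarantee exactly this, by turning ``which partial computation is further along'' into a $\ci$-computable comparison.
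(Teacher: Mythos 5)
Your first half is sound and is the standard use of stage comparison: once a single convergent index $\rho$ is in hand, every call $P(\langle e,\vec F,\vec f,n,\vec a\rangle,\langle e,\vec F,\vec f,\rho,\vec a\rangle)$ terminates, the set $S_\rho$ consists of convergent indices and contains all minimal-norm witnesses, and $^2E$ then extracts the least one. Note that the paper itself gives no construction at all at this point: it declares the theorem a ``soft consequence'' of Lemma \ref{SC} and cites the classical literature (Gandy, Fenstad, Moldestad, Sacks). So everything hinges on whether your second half correctly supplies what that classical argument supplies, namely a way to find \emph{one} convergent index; and there your proof has a genuine gap.

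The parallel induction you describe presupposes a uniform, $\ci$-computable ``step function'' that advances an arbitrary --- possibly divergent --- computation $\{e\}(\ci,\vec F,\vec f,n,\vec a)$ through its ordinal stages, so that all branches can be run ``in lock-step ordered by their norms''. Lemma \ref{SC} provides no such primitive: it yields only the binary comparison functional $P$, whose calls terminate only when at least one of the two \emph{completed} computations exists, and whose proof proceeds by structural recursion on pairs of indices, not by exhibiting a stage-by-stage simulator of a single computation. In particular, your appeal to case (S8.3, S8.3) does not transfer: that case is special precisely because both computations being compared are single applications of $\ci$, i.e.\ linear inductions with an intrinsic notion of stage, whereas a general computation with index $e$ is a nested branching tree (compositions, $F_1$-applications, nested $\ci$-calls) for which ``the stage-$\alpha$ part'' is not defined in your proposal, let alone computed uniformly in $\ci$; making such a linearisation precise is essentially the content of the paper's later machinery of inductive procedures and Moschovakis witnesses, which is not available here. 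Consequently the assertion ``no divergent $n$ can stall the induction'' is exactly the point that needs proof, and Lemma \ref{SC} does not ``turn which partial computation is further along into a $\ci$-computable comparison'' --- it gives no access to partial progress at all. The classical argument that the paper cites closes this hole differently, by a \emph{self-referential} rather than parallel race: by the recursion theorem one defines $\nu$ so that it compares, via $P$, the computation $\{e\}(\ci,\vec F,\vec f,k,\vec a)$ against the single computation ``$\nu$ applied to the search over indices $>k$'', outputs $k$ if $P$ returns $1$, and recurses otherwise; termination and correctness are then proved by induction on the least norm of a convergent instance. The self-reference guarantees that every $P$-call made has a convergent side available, which is the idea missing from your construction.
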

\begin{proof}
This is a soft consequence of Lemma \ref{SC}, with an argument well known in the literature, see e.g.\ \cite[Theorem 3.1.6]{JEF}, \cite[Theorem 3]{Johan}, \cite[Theorem X.4.1]{Sacks} or the original \cite{Gandy}.

\end{proof}

For many of the inductive definitions used, we add at most one new element to the inductively defined set at each stage. Such definitions can be defined by functionals $F$ of pure type 2, identifying $2^\N$ with the power set of $\N$ via characteristic functions:
\begin{definition}{\em Let $G:2^\N \rightarrow \N$ and let $H_G: 2^\N \rightarrow 2^\N$ be defined by
\[H_G(A) = A \cup \{G(A)\}.\]
An inductive definition $F$ is \emph{single valued} if it is in the form $H_G$. We let $\ci_0(G) = \ci(H_G)$
}\end{definition}
\begin{lemma} The functionals $\ci$ and $\ci_0$ are computationally equivalent modulo $^2E$.
\end{lemma}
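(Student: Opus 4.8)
The statement asserts two reductions. One direction is immediate: given $G:\Ca\to\N$, the functional $H_G(A)=A\cup\{G(A)\}$ is computable in $G$ alone (its value at $n$ is $\max\{A(n),[n=G(A)]\}$), so from an index for $G$ we effectively obtain an index for $H_G$ and hence compute $\ci_0(G)=\ci(H_G)$ with a single call to $\ci$. Thus $\ci_0$ is computable in $\ci$, even without $^2E$. The content of the lemma is the converse: $\ci$ is computable in $\ci_0$ together with $^2E$.

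For that direction, fix $F:\Ca\to\Ca$ and let $\{f_\beta\}_{\beta\le\alpha_F}$ be its stages. The plan is to simulate this induction, which may add many elements of $F(f_\beta)\setminus f_\beta$ at a single step, by a \emph{single-valued} induction of the form $H_G$ that adds one element at a time, and then to read $\ci(F)=f_{\alpha_F}$ off the result. The governing idea is the identity $f_\mu=f_0\cup\bigcup_{\gamma<\mu}F(f_\gamma)$, valid at successors and limits alike, which shows that each stage is recoverable from the cumulative family of batches $F(f_\gamma)$. Accordingly I would work with a tagged version of $\Ca$ (using a pairing on $\N$) whose elements record, for every integer that has entered so far, the stage at which it entered. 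I would design $G$ so that on a current set $A$ it first reconstructs, with the help of $^2E$, the genuine stage $f_\beta$ that $A$ currently represents, then computes $F(f_\beta)$ by one oracle call to $F$, and finally returns a single new tagged integer: the least element of $F(f_\beta)$ not yet present, if one exists, and otherwise a token that advances the reconstruction to the next stage. Membership in $F(f_\beta)$, the subset tests $F(f_\beta)\subseteq f_\beta$ that detect closure, and the least-witness searches are all decidable from $F$ and $^2E$, so $G$ is computable in $F$ and $^2E$.

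The correctness proof would proceed by transfinite induction on $\beta$, showing that the stages of the single-valued induction $H_G$ pass through tagged codes of $f_0,f_1,\dots$ in order, each batch $F(f_\beta)\setminus f_\beta$ being enumerated over at most $\omega$ successor steps and each stage $f_\mu$ for limit $\mu$ being produced at the corresponding limit step of the $\ci_0$-induction by the union identity above. The $H_G$-induction then closes exactly when the reconstructed stage $f_\beta$ satisfies $F(f_\beta)\subseteq f_\beta$, i.e.\ at $\beta=\alpha_F$, and decoding the tags of $\ci_0(G)$ returns $f_{\alpha_F}=\ci(F)$; this decoding, too, uses only $^2E$.

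The main obstacle, and the point that dictates the tagging, is that a single-valued induction necessarily passes through intermediate sets that are \emph{not} stages of $F$, and since $F$ is non-monotone its values on such sets are meaningless and, if fed back into the construction, would corrupt all later stages. The design must therefore guarantee that $F$ is applied \emph{only} to reconstructed genuine stages $f_\beta$: the base stage must stay frozen while its batch is being flushed, and the promotion to the next stage must be carried out by purely additive, tag-only steps that never present a partially promoted set to $F$. This bookkeeping is the analogue, in the single-valued setting, of the passage through prewellorderings used in the proof that $p$- and $t$-computability coincide, and verifying that it keeps every $F$-call anchored at a true stage is where the real work lies.
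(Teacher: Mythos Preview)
Your proposal is correct and follows the same approach as the paper: simulate $F$ by a single-valued $G$, computable in $F$ and $^2E$, whose running state encodes enough to reconstruct the current genuine $F$-stage $f_\beta$, so that $F$ is only ever applied there, and then read $\ci(F)$ off $\ci_0(G)$ using $^2E$. The paper realizes your abstract tagging concretely by letting $G$ act on sets of finite binary sequences under the lexicographic order: a state with no maximal element signals a completed stage (recovered as the union of supports), and the next batch is flushed by appending ever-longer finite approximations to the characteristic function of $f_\beta\cup F(f_\beta)$, so each $F$-stage costs exactly $\omega$ steps of $\ci_0$ and no separate advance token is needed.
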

\begin{proof}$\ci_0$ is trivially, and outright, computable in $\ci$.
In order to prove the other direction, we let $F:2^\N \rightarrow 2^N$ be given, and we will construct a single valued $G$ that \emph{simulates} $F$. We assume that $F$ is nontrivial, i.e. that $F(\emptyset) \neq \emptyset$. We let $G$ operate on sets $B$ of finite binary sequences, and we totally order these sequences using the standard lexicographical ordering by first comparing the first place where two sequences are different, and if this does not help, by length. This is not a well ordering, but $G$, as we define it, will only generate well ordered  sets of sequences. There will be three cases in the definition of $G(B)$:
\begin{enumerate}
\item $B$ has no maximal element $s$. Let $A$ be the set of $n$ such that $s(n) = 1$ for at least one $s \in B$. If $F(A) \subseteq A$, let $G(B)$ be the (sequence number of) the empty sequence. If not, let $n$ be the least number in $F(A) \setminus A$, and let $G(B) = s$ where $s$ is the binary sequence of length $n+1$ approximating the characteristic function of $A \cup F(A)$.
\item If there are elements  $s_1 < \cdots < s_k$ in $B$ so that $B_1 = \{s \in B : s < s_1\}$ has no maximal element and such that
\[B =  B_1 \cup \{s_1 , \ldots , s_k\},\] let $A$ be the set of $n$ such that $s(n) = 1$ for at least one $s$ in $B_1$. If the sequences $s_1 , \ldots , s_k$ do not approximate the characteristic function of $A \cup F(A)$, let $G(B) = 0$ (the value does not matter), while otherwise, we let $G(B) = s$ where $s$ is the least proper extension of $s_k$ that approximates $A \cup F(A)$.
\item Otherwise, let $G(B) = 0$.
\end{enumerate}
The induction induced by $G$ will, one step at the time, build up approximations to the characteristic functions of the sets appearing in the induction induced by $F$. If $\ci(F)$ uses $\alpha$ many steps, $\ci_0(G)$ will use $\omega \cdot \alpha$ many steps. Clearly $G$ is computable in $F$ and $^2E$, and the closure set of $F$ is arithmetical in the closure set of $G$. Thus $\ci$ is computable in $\ci_0$ and $^2E$. \end{proof}
\section{The companion of $\ci$}\label{Comp}
In this section we will analyse the computational power of $\ci$ in terms of set theory. Recall that a set $X$ is \emph{hereditarily countable} if the \emph{transitive closure} ${\rm trcl}(X)$ is countable. Hereditarily countable sets $X$ will have \emph{codes}, essentially structures $(D,R,A)$ where $D \subseteq \N$, $R$ is a binary relation on $D$, $A \subseteq D$ and $(D,R,A)$  is isomorphic to $({\rm trcl}(X), \in^{{\rm trcl}(X)},X)$. Such codes can further be coded as functions in $\N^\N$ in a natural way.
\begin{definition}{\em The \emph{companion} $\mathcal M$ of $\ci$ is defined as the set of sets $X$ with codes that are computable in $\ci$.}\end{definition}
\begin{remark}{\em The companion of other functionals are defined in analogy with this. For instance, the companion of $^2E$ will be $\LL_{\omega_1^{\rm CK}}$, the companion of $\su$ (the Suslin functional) is $\LL_\beta$ for the first recursively inaccessible ordinal $\beta$ while the companion of $\mathbb S$ (the Superjump)  is $\LL_\rho$ where $\rho$ is the first recursively Mahlo ordinal .}\end{remark}
\begin{lemma} There is a countable ordinal $\pi$  such that ${\mathcal M} =\LL_\pi$. \end{lemma}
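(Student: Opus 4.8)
The plan is to set $\pi := \mathcal M \cap \mathrm{Ord}$ and prove the two inclusions $\LL_\pi \subseteq \mathcal M$ and $\mathcal M \subseteq \LL_\pi$, after first checking that $\mathcal M$ is a transitive, admissible set and that $\pi$ is a countable ordinal. I would begin by verifying transitivity: if $X \in \mathcal M$ has a code $c$ computable in $\ci$, then for each $Y \in {\rm trcl}(X)$ a code for $Y$ is obtained from $c$ by an elementary (Turing) procedure, so $Y \in \mathcal M$; in particular $\mathcal M$ is closed under $\in$. Hence $\mathcal M \cap \mathrm{Ord}$ is an initial segment of the ordinals, i.e.\ an ordinal $\pi$, and since every code lies in $\N^\N$, every element of $\mathcal M$ is hereditarily countable and $\pi$ is countable. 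This $\pi$ is exactly the least ordinal without a code computable in $\ci$ that is the subject of this section. Next I would show that $\mathcal M$ is admissible: closure under pairing and union is immediate from closure of $\ci$-computability under the Kleene schemes, $\Delta_0$-separation uses that $^2E$ and $\su$ are computable in $\ci$, and $\Sigma$-collection is exactly where Gandy selection is used, the selection operator $\nu$ uniformising a $\Sigma$-definable total relation and thereby keeping its range inside $\mathcal M$.

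For $\LL_\pi \subseteq \mathcal M$ I would show that the constructible hierarchy is generated effectively along codes. Given $\alpha < \pi$ with a wellordering $w \in \WO$ of type $\alpha$ computable in $\ci$, one builds codes for $\LL_\beta$ for all $\beta \le \alpha$ by recursion along $w$: the successor step is the passage to the sets first-order definable over the previous level, an operation arithmetical in the code (computable from $^2E$, since the first-order quantifiers range over the countable, $\N$-indexed domain), while the recursion along $w$ is driven by $\ci$, non-monotone induction subsuming transfinite recursion along a computable wellordering. This yields a code for $\LL_\alpha$ computable in $\ci$; extracting codes for its members shows $\LL_\alpha \subseteq \mathcal M$, and letting $\alpha \to \pi$ gives $\LL_\pi \subseteq \mathcal M$.

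The reverse inclusion $\mathcal M \subseteq \LL_\pi$ rests on the observation that $\ci$ is set-theoretically definable with no reference to non-constructible objects: for constructible $F$ the sequence $f_\beta$ and the closure ordinal $\alpha_F$ are computed inside $\LL$ by absolute transfinite recursion. Consequently the relation $\{e\}(\ci) = c$, restricted to $\ci$-computable (hence constructible) data, is absolute to $\LL$, so every real computable in $\ci$ lies in $\LL$. To place it below $\pi$ I would use the norm: by Stage Comparison (Lemma \ref{SC}) the subcomputations of a convergent computation are wellordered by norm in a $\ci$-computable way, so the norm $\lambda$ of a convergent computation is an ordinal with a code computable in $\ci$, whence $\lambda < \pi$. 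The witnessing computation tree then has rank below $\pi$ and, being constructible, lies in $\LL_\pi$; reading $c$ off the tree gives $c \in \LL_\pi$, and since $\LL_\pi$ is admissible it contains the Mostowski collapse of the structure coded by $c$, namely $X$. Packaged differently: $\mathcal M$ is a transitive admissible set contained in $\LL$ with $\LL_\pi \subseteq \mathcal M$ and no element of constructibility rank $\ge \pi$, so $\mathcal M = \LL_\pi$.

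The main obstacle is the bound $\lambda < \pi$ in the reverse inclusion: this is precisely a reflection property of $\pi$, and it is the point at which the stage-comparison and selection machinery of Section \ref{2.3}, together with the reflection properties of $\pi$ established elsewhere in Section \ref{Comp}, become indispensable. By comparison the forward inclusion is routine, the only care being the verification that the definable-power-set operation at successor stages is computed uniformly in the code.
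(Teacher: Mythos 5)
Your overall architecture coincides with the paper's own proof, which is only a two-sentence sketch: absoluteness of the $p$-computation relation places $\mathcal M$ inside $\LL$ as a transitive set, and the effective ($^2E$-computable) passage from a code for an ordinal $\alpha$ to a code for $\LL_\alpha$ makes $\mathcal M$ an initial segment of $\LL$. Your forward inclusion is exactly the paper's second ingredient, and your definition $\pi = \mathcal M \cap \mathrm{Ord}$, the countability remark, and the appeal to stage comparison to bound norms of convergent computations are the standard way of filling in what the paper suppresses. (Your admissibility discussion for $\mathcal M$ is not needed for this lemma; in the paper, admissibility is the subsequent lemma, proved for $\LL_\pi$ after the identification $\mathcal M = \LL_\pi$.)

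Two steps in your reverse inclusion are, however, faulty as written. First, ``the witnessing computation tree has rank below $\pi$ and, being constructible, lies in $\LL_\pi$'' rests on a false principle: set-theoretic rank is not $\LL$-rank. Every real has rank $\omega$, yet constructible reals occur cofinally in the levels of $\LL$ up to $\omega_1^{\LL}$, so ``constructible of rank $<\pi$'' does not yield membership in $\LL_\pi$. What your own norm bound $\lambda < \pi$ actually buys, via the absoluteness you invoked, is that the computation and its value are $\Sigma_1$-definable over some level $\LL_\beta$ with $\beta < \pi$; that is the inference you should make, and the rank/$\LL$-rank conflation must be removed. Second, ``since $\LL_\pi$ is admissible it contains the Mostowski collapse of the structure coded by $c$'' is circular at this point: admissibility of $\LL_\pi$ is only available once $\mathcal M = \LL_\pi$ is proved, and your admissibility of $\mathcal M$ gives nothing here, since $X \in \mathcal M$ is the hypothesis. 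The repair is short and uses only machinery you already cite: given $c \in \LL_\beta$ with $\beta < \pi$, a code for an admissible ordinal $\gamma$ with $\beta < \gamma$ is computable in $\ci$ from a $\ci$-computable code for $\beta$ (using $\su \leq \ci$ and Gandy selection), so $\gamma < \pi$; inside the admissible set $\LL_\gamma$ the wellfounded extensional structure coded by $c$ collapses, whence $X \in \LL_\gamma \subseteq \LL_\pi$. With these two inferences rewritten, your argument is a correct, detailed version of the paper's proof.
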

\begin{proof}Since $\{e\}_p$ is absolute for $\LL$, we have that $\mathcal M$ is a transitive subset of $\LL$. $\LL$ will be closed under a certain map sending a code for an ordinal $\alpha$ to a code for $\LL_\alpha$ (this map is computable in $^2E$), so $\mathcal M$ will be an initial segment of $\LL$. \end{proof}
\begin{lemma}\label{lemma.compare} Let $F:\Ca \rightarrow \Ca$ be a partial functional computable in $\ci$ such that $\ci(F)$ is defined. Let $\alpha$ be the corresponding closure ordinal for $F$. Then $\alpha < \pi$. \end{lemma}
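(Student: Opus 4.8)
The plan is to show that $\alpha$ has a code computable in $\ci$; since $\pi$ is the least ordinal not computable in $\ci$ (equivalently $\mathcal{M}\cap\mathrm{Ord}=\pi$, as $\mathcal{M}=\LL_\pi$), this immediately yields $\alpha<\pi$. The ordinal $\alpha$ is the order type of the natural \emph{stage prewellordering} attached to the induction: for $n\in\ci(F)$ let $|n|$ be the least $\beta$ with $n\in f_{\beta+1}$, the stage at which $n$ enters, and order $\ci(F)$ by $m\preceq n\iff |m|\le|n|$. Because $\alpha$ is the \emph{least} ordinal with $F(f_\alpha)\le f_\alpha$, we have $f_\beta\subsetneq f_{\beta+1}$ for every $\beta<\alpha$ (otherwise $\beta$ would already satisfy the closure condition); hence for each $\beta<\alpha$ some element enters exactly at stage $\beta$, so the ranks of $\preceq$ are precisely the ordinals below $\alpha$ and the order type of $\preceq$ is exactly $\alpha$. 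Thus it suffices to prove that $\preceq$ is computable in $\ci$.

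To compute $\preceq$ I would run a single auxiliary induction that records stages as it proceeds. Working with $\ci$ (hence with $F$ and with $^2E\le\ci$), define an inflationary operator $\Gamma^\ast$ acting on sets of codes of pairs: from the current set $P$ decode its domain $A=\{n:\langle n,n\rangle\in P\}$, compute $F(A)$, let $N=F(A)\setminus A$ be the newly entering elements, and enlarge $P$ by placing all of $N$ at one fresh top level, i.e.\ add $\langle a,x\rangle$ for $a\in A\cup N$ and $x\in N$, and $\langle x,y\rangle$ for $x,y\in N$, but no pair $\langle x,a\rangle$ with $a\in A$. Each individual membership query to $\Gamma^\ast(P)$ is computable in $F$ and $^2E$, so $\Gamma^\ast$ is a legitimate operator in the sense of clause S8.3. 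The domain of the $\beta$-th set in the $\Gamma^\ast$-induction equals $f_\beta$, and $\Gamma^\ast$ closes precisely when $F(A)\subseteq A$, i.e.\ at stage $\alpha$; since $\ci(F)$ is assumed defined, $F$ is defined on every $f_\beta$ with $\beta\le\alpha$, so $\Gamma^\ast$ only ever calls $F$ on these totally-defined arguments and $\ci(\Gamma^\ast)$ is defined by Definition \ref{Part}. Its value is the completed prewellordering $\preceq$, computable in $\ci$.

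Finally, from $\preceq$, which is a real computable in $\ci$, I would extract a genuine well-ordering code of type $\alpha$: using $^2E$, keep from each $\preceq$-level its least element (deciding $m\equiv n$ and $m\prec n$ from $\preceq$), obtaining a subset of $\N$ on which $\preceq$ restricts to a well-ordering of order type $\alpha$, and this $\WO$-code is again computable in $\ci$. Hence $\alpha$ is an ordinal computable in $\ci$, so $\alpha<\pi$.

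The main obstacle is the middle step: checking that $\Gamma^\ast$ mirrors the $F$-induction stage for stage, so that its closure ordinal and the induced ranks coincide with those of $F$, and that it is admissible as a partial input to $\ci$, calling $F$ only on the arguments $f_\beta$ with $\beta\le\alpha$ on which $F$ is known to be defined. Everything else is routine given $^2E\le\ci$; the feasibility of the stage comparison underlying this construction is also guaranteed by Lemma \ref{SC}, which could be invoked as an alternative route to the computability of $\preceq$.
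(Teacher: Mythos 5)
Your proof is correct and follows essentially the same route as the paper: the paper's one-paragraph argument likewise builds an auxiliary operator $F'$ (your $\Gamma^\ast$), computable from $F$ and $^2E$, whose induction generates the stage prewellordering $R$ (your $\preceq$) with $R_{\beta+1}=F(R_\beta)\setminus R_\beta$, and concludes that $\alpha$, being the rank of this $\ci$-computable prewellordering, is computable in $\ci$ and hence below $\pi$. Your additional care about the partiality of $F$ and the extraction of a genuine $\WO$-code are details the paper leaves implicit, not a different method.
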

\begin{proof} For each $F$ there is an $F'$ computable in $F$ and $^2E$ such that $F'$ generates a prewellordering $R$ where $R_{\beta + 1} = F(R_\beta) \setminus R_\beta$ for each ordinal $\beta$. Then $\alpha$ will be the ordinal rank of the inductively definable prewellordering $R$ so $\alpha$ will be computable in $\ci$ whenever $F$ is computable in $\ci$. \end{proof}

The aim of this section is to find closure- and reflection-properties of $\LL_\pi$. 
Since $\su$ is computable in $\ci$ we have that the set of codes for hereditarily countable sets is computable in $\ci$. Given  codes $f_i$ for sets $X_i$, $i \in \N$, , we only need $^2E$ to unify the codes in the form of a code for $\{X_i : i \in \N\}$. Further, given codes $f_1 , \ldots , f_n$ for sets $X_1 , \ldots , X_n$, and a $\Delta_0$-formula $\Phi(x_1  \ldots , x_n)$, $^2E$ can decide the truth value of $\Phi(X_1 , \ldots , X_n)$. Finally, if $\Phi(x_1 , \ldots , x_n,y)$ is a $\Delta_0$-formula, $f_1 , \ldots , f_n$ are  codes  computable in $\ci$ for $X_1 , \ldots , X_n \in \LL_\pi$ and \[\LL_\pi \models \exists Y \Phi(X_1 , \ldots , X_n, Y)\]
then  we can use Gandy selection for $\ci$ to compute (an index for) a code $g$ for a set $Y$ such that $\Phi(X_1 , \ldots , X_n , Y)$. This leads to a proof of
\begin{lemma} $\LL_\pi$ is an admissible structure. \end{lemma}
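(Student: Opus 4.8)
The plan is to verify the defining properties of an admissible structure for $\LL_\pi$ directly, relying on the computational closure facts established immediately before the statement. Recall that a transitive set $M$ is admissible if it is a model of Kripke--Platek set theory; concretely, we must check that $M = \LL_\pi$ satisfies extensionality, pairing, union, $\Delta_0$-separation, and $\Sigma_1$-collection (equivalently $\Delta_0$-collection). Since $\LL_\pi$ is by an earlier lemma an initial segment of $\LL$, it is automatically transitive and satisfies extensionality, pairing, union, and foundation; the genuine content is $\Delta_0$-separation and $\Sigma_1$-collection, so I would concentrate the proof there.

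First I would record the three closure facts stated in the paragraph preceding the lemma, as these are exactly the tools needed. The code-unification fact (using only $^2E$) gives closure of the companion under the formation of $\N$-indexed families and hence, together with $\Delta_0$-truth evaluation, under $\Delta_0$-separation: given a code for $X \in \LL_\pi$ and a $\Delta_0$-formula $\Phi$ with parameters from $\LL_\pi$, one enumerates the elements of $X$ via its code, uses $^2E$ to decide $\Phi$ on each, and unifies the selected elements' codes into a single code computable in $\ci$, which therefore names a set in $\LL_\pi$. This establishes $\Delta_0$-separation.

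The heart of the matter is $\Sigma_1$-collection, and here I would use Gandy selection for $\ci$ (the theorem proved just above). Suppose $\LL_\pi \models \forall x \in X\, \exists y\, \Phi(x,y,\vec{Z})$ with $\Phi$ a $\Delta_0$-formula and codes for $X, \vec Z$ computable in $\ci$. For each element $x$ of $X$ (enumerated through the code for $X$), the displayed reflection fact lets me compute, via Gandy selection, an index for a code of some witness $y_x \in \LL_\pi$ with $\Phi(x,y_x,\vec Z)$. The key point is that this selection is uniform in $x$: I obtain a single $\ci$-computable function $x \mapsto (\text{code for } y_x)$. Unifying these codes into a code for the family $\{y_x : x \in X\}$ — again using only $^2E$ — produces a code, computable in $\ci$, for a set $W \in \LL_\pi$ containing a witness for every $x \in X$. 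This is precisely the collecting set required.

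The main obstacle I anticipate is making the uniformity in the selection step precise: Gandy selection as stated produces, for a single terminating existential, a selected index, and one must check that applying it along the enumeration of $X$ yields a genuinely $\ci$-computable \emph{function} whose range of codes can be packaged into one code naming an element of $\LL_\pi$. This requires that the whole construction stays within the companion — i.e. that the resulting code is itself computable in $\ci$ and not merely that each individual witness is. Since the enumeration of $X$, the per-element selection, and the final unification are each computable in $\ci$ (the first and last using $^2E$ alone, the middle using Gandy selection), their composition is, and the absoluteness of $\{e\}_p$ for $\LL$ guarantees the named set lies in $\LL_\pi$. I would therefore close by remarking that $\Delta_0$-separation together with $\Sigma_1$-collection, over a transitive model of the basic axioms, is exactly admissibility.
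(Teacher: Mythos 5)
Your proposal is correct and takes essentially the same approach as the paper: the paper's entire proof consists of the three facts you invoke ($^2E$-unification of countably many codes, $^2E$-decidability of $\Delta_0$-truth on codes, and Gandy selection for producing $\ci$-computable codes of $\Sigma_1$-witnesses), followed by the remark that these "lead to a proof" of admissibility. You have simply carried out the standard Kripke--Platek axiom verification (transitivity and the basic axioms from $\LL_\pi$ being an initial segment of $\LL$, then $\Delta_0$-separation and $\Delta_0$-collection from the three computational facts) that the paper leaves implicit.
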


Let $\WO$ be the set of codes for countable ordinals. This is a $\Pi^1_1$- set, and it is easy to prove that the following sets are $\Pi^1_1$ as well:
\begin{enumerate}
\item The set of $f \in \WO$ that codes $\omega_1^{\rm CK}$.
\item The set of $f \in \WO$ that codes the first recursively inaccessible ordinal.
\item The set of $f \in \WO$ that codes the first recursively Mahlo ordinal.
\end{enumerate}
We say that these ordinals are \emph{$\Pi^1_1$-characterisable}. Many ordinals of distinction are $\Pi^1_1$-characterisable, for instance all \emph{clockable} ordinals in the sense of infinite time Turing machines (\cite{HL}), see Welch \cite{Welch.Turing} for a survey and further references on such machines.
\begin{definition}{\em Let ${\bf P}$ be a class of ordinals. We say that ${\bf P}$ is \emph{$\ci$-decidable} if there is an $\ci$-computable function $\Delta:\N^\N  \rightarrow \N$ such that $\Delta(f) = 0$ if and only if $f$ codes an ordinal $\alpha$ and ${\bf P}(\alpha)$ holds.}\end{definition}
 That $\pi$ is not $\Pi^1_1$-characterisable follows from the following much stronger:
 \begin{theorem}\label{thm.improved}Let $\bf P$ be a property on ordinals that is $\ci$ decidable and such that ${\bf P}(\pi)$.
 Let $X \subset \pi$ be closed, unbounded and $\Sigma_1$ over $\LL_\pi$. Then there is an $\alpha \in X$ such that ${\bf P}(\alpha)$.
 \end{theorem}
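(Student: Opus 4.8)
===Proof Proposal===

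The plan is to prove this by contradiction, combining the reflection phenomenon coming from admissibility with the non-$\Pi^1_1$-characterisability of $\pi$ that the theorem is meant to strengthen. So suppose that $\bf P$ is $\ci$-decidable, that ${\bf P}(\pi)$ holds, and that $X \subseteq \pi$ is closed, unbounded, and $\Sigma_1$ over $\LL_\pi$, but that ${\bf P}(\alpha)$ fails for every $\alpha \in X$. The strategy is to manufacture from these hypotheses a code for $\pi$ that is computable in $\ci$, contradicting the fact (from the previous discussion and Corollary \ref{cor.car.1}) that $\pi$ has no $\ci$-computable code at all.

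First I would unpack what the hypotheses give us computationally. Since $X$ is $\Sigma_1$ over $\LL_\pi$ and $\LL_\pi$ is admissible (by the preceding lemma), membership in $X$ for ordinals coded below $\pi$ is captured by a $\Sigma_1$ formula, and Gandy selection for $\ci$ lets us search $\LL_\pi$ effectively; this should make the relation ``$g$ codes an ordinal in $X$'' semi-decidable in $\ci$ in a uniform way. The decidability of $\bf P$ gives an $\ci$-computable $\Delta$ testing ${\bf P}(\alpha)$ on codes. The key tension I want to exploit is this: an ordinal $\alpha$ with a code computable in $\ci$ must, by the definition of the companion and Lemma \ref{lemma.compare}, satisfy $\alpha < \pi$; indeed every ordinal in $\LL_\pi$ is strictly below $\pi$. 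So if I could locate, effectively in $\ci$, a single ordinal that is \emph{forced} to equal $\pi$, I would have the desired code for $\pi$ and the contradiction.

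The heart of the argument is to use the closed unbounded set $X$ together with ${\bf P}(\pi)$ and $\neg{\bf P}(\alpha)$ on $X$ to pin down $\pi$ as a \emph{limit} computed from below. The idea is to run an $\ci$-computation that, working inside $\LL_\pi$, enumerates elements of $X$ and watches the $\Delta$-test: because $X$ is unbounded in $\pi$ and ${\bf P}$ fails on all of $X$ while holding at $\pi$, the ordinal $\pi$ is characterised as the supremum of (the club) $X$, and $\bf P$ separates $\pi$ from every approximating stage. Concretely, I would argue that the $\Sigma_1$-definition of $X$ over $\LL_\pi$, combined with closure of $X$, yields a $\Sigma_1$ (hence, via admissibility and Gandy selection, $\ci$-computable) procedure producing codes for cofinally many ordinals below $\pi$; their supremum is a code for $\pi$ built by a single application of $\ci$ as in the closure arguments of this section. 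The role of ${\bf P}$ and the closedness of $X$ is to guarantee that this supremum cannot be attained strictly below $\pi$: any candidate limit $\alpha < \pi$ that $X$ reaches lies in $X$ (closure), so ${\bf P}(\alpha)$ fails, whereas we need ${\bf P}$ to hold at the true limit, forcing the construction to run exactly up to $\pi$.

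The main obstacle, and the step I expect to be delicate, is precisely the last point: showing that the $\ci$-computation terminating with a code for the limit of $X$ genuinely produces $\pi$ rather than stalling at some smaller admissible level, and that every step stays within the $\ci$-computable fragment. This requires care because a $\Sigma_1$-over-$\LL_\pi$ definition of $X$ need not reflect to proper initial segments, so I must verify that the selection and supremum operations do not secretly need information about $\LL_\pi$ as a completed whole that lies outside what $\ci$ can compute. The way to resolve this is to observe that all the data—the $\Sigma_1$-definition of $X$, the index for $\Delta$, and the Gandy selection operator—are fixed finite pieces of $\ci$-computable information, and that the resulting code for $\sup X$ is obtained by an inductive-definition application of $\ci$ whose closure ordinal is exactly $\pi$; the properties of $\bf P$ ensure this closure ordinal is not smaller. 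Assembling these yields an $\ci$-computable code for $\pi$, the contradiction that completes the proof.
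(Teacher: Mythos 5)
Your proposal is correct and follows essentially the same route as the paper: both build a non-monotone inductive definition, computable in $\ci$ via Gandy selection and the $\Sigma_1$-definition of $X$, that climbs through codes for elements of $X$ with the $\ci$-decidability of $\bf P$ as the stopping condition, and both conclude from the impossibility of such an induction running for (and closing at) $\pi$ steps. The only difference is presentational: you argue by contradiction, extracting an $\ci$-computable code for $\pi$ from the completed induction, while the paper argues directly by invoking Lemma \ref{lemma.compare} (whose proof is exactly that the closure ordinal of an $\ci$-computable induction is itself $\ci$-computable, hence below $\pi$), so the two arguments are contrapositive formulations of the same idea.
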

 \begin{proof}
 We can code a partially enumerated set $\{f_d : d \in D\}$ of functions as the set of pairs $\langle d , \overline f_d(n )\rangle$ where $d \in D$ and $n \in \N$. The idea is to construct an inductive definition $\Gamma$ that is computable in $\ci$ and such that $\Gamma$ generates a code for an ordinal both in $X$ and satisfying $\bf P$. $\Gamma$ will not be total, but sufficiently total for the induction to terminate. In defining $\Gamma$ as computable in $\ci$, we use that the Suslin functional $\su$ is computable in $\ci$.
 We define  $\Gamma(R)$ as follows:
 \begin{itemize} 
 \item If $R$ does not code an enumerated set $\{f_d : d \in D\}$, we let $\Gamma(R) = R$. Note that the empty set codes the empty set of functions.
 \item Assume that $R$ codes $\{f_d : d \in D\}$. If $f_d \not \in \WO$ for some $d \in D$, let $\Gamma(R) = R$.
 \item Assume now that $f_d \in \WO$ codes $\alpha_d$ or all $d \in D$, and use $^2E$ to compute a code $g$ for the least upper bound $\alpha$ of $\{\alpha_d : d \in D\}$. If each $\alpha_d$ are in $X$, then $\alpha \in X$ since $X$ is closed. If ${\bf P}(\alpha)$ we let $\Gamma(R) = R$. This is where we want the induction to close.
 \item Otherwise, we apply Gandy selection for $\ci$ and search for an index $e$ for a code $g$ of an ordinal $\beta > \alpha$ such that $\beta \in X$. We then let \[\Gamma(R) =  R \cup \{\langle e,\overline g(n)\rangle : n \in \N\}.\]
 \end{itemize}
 If $\alpha < \pi$, we can use the recursion theorem for $\ci$ to see that $L_\pi$ is closed under the $\alpha$-iteration of $\Gamma$, and that $\Gamma$ generates codes for an increasing sequence of ordinals $\gamma_\beta$ for $\beta < \alpha$. Since we always use an index $e$ for an ordinal larger than those appearing at earlier stages, we do not risk to mix up codes for different ordinals. Since $X$ is closed, all ordinals  obtained during this iteration will be codes for ordinals $\gamma_\beta \in X$.  Since ${\bf P}(\pi)$ and this induction will stop when we hit a $\gamma_\beta$ with ${\bf P}(\gamma_\beta)$, and since by Lemma \ref{lemma.compare} no such induction will stop at $\pi$, there must be an ordinal $\gamma_\beta < \pi$ such that ${\bf P}(\gamma_\beta)$.
 
 \end{proof}
 
 \begin{corollary}\label{cor.car.1} The closure ordinal $\pi$ of $\ci$ is not $\Pi^1_1$-characterisable.\end{corollary}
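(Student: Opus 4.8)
The plan is to derive the corollary directly from Theorem \ref{thm.improved} by taking for ${\bf P}$ the property of simply \emph{being} $\pi$, and by choosing the trivial club $X$. Unwinding the definition given just before the list of three examples, to say that $\pi$ is $\Pi^1_1$-characterisable is to say that the set \[W_\pi = \{f \in \WO : f \text{ codes } \pi\}\] is $\Pi^1_1$. I would argue by contradiction, assuming $W_\pi$ is $\Pi^1_1$.

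The first step is to observe that this hypothesis makes the property ${\bf P}(\alpha) \Leftrightarrow \alpha = \pi$ into an $\ci$-decidable class of ordinals. Indeed, by the defining property of the Suslin functional, the characteristic function of any $\Pi^1_1$-set is computable in $\su$; and since $\su$ is computable in $\ci$ (the second Lemma of Section \ref{2.}), the characteristic function of $W_\pi$ is $\ci$-computable. Thus there is an $\ci$-computable $\Delta$ with $\Delta(f) = 0$ iff $f$ codes an ordinal $\alpha$ with $\alpha = \pi$, which is exactly $\ci$-decidability of ${\bf P}$. Trivially ${\bf P}(\pi)$ holds, so both hypotheses of Theorem \ref{thm.improved} on ${\bf P}$ are met.

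Next I would supply the set $X$. Since $\pi$ is admissible, it is a limit ordinal, so $X = \pi$ itself, i.e. the set of all ordinals below $\pi$, is unbounded in $\pi$ and closed (the supremum of any subset bounded below $\pi$ is again an ordinal below $\pi$). As a subset of $\pi$ it is defined inside $\LL_\pi$ by the $\Delta_0$ predicate ``$x \in \mathrm{Ord}$'', hence is $\Sigma_1$ over $\LL_\pi$. Applying Theorem \ref{thm.improved} to this ${\bf P}$ and this $X$ yields an $\alpha \in X$ with ${\bf P}(\alpha)$, that is, an $\alpha < \pi$ with $\alpha = \pi$, a contradiction. Therefore $W_\pi$ is not $\Pi^1_1$ and $\pi$ is not $\Pi^1_1$-characterisable.

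The only substantive point, and the step I would expect to need the most care in phrasing, is the reduction in the first paragraph: that $\Pi^1_1$-characterisability of $\pi$ forces $\ci$-decidability of the one-point class $\{\pi\}$. Everything else is immediate once one notices that the full ordinal $\pi$ is already a legitimate $\Sigma_1$ club witness, so that no delicate construction of $X$ is needed and the strength of Theorem \ref{thm.improved} is applied in its crudest form.
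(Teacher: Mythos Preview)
Your proof is correct and is exactly the argument the paper has in mind: the corollary is stated without proof precisely because it is the immediate instantiation of Theorem \ref{thm.improved} with ${\bf P} = \{\pi\}$ and $X = \pi$, and your reduction of $\Pi^1_1$-characterisability to $\ci$-decidability via $\su$ is the intended step.
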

 
 We also have

 \begin{corollary} The closure-ordinal $\pi$ of $\ci$ is recursively Mahlo.\end{corollary}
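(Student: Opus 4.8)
The plan is to derive this directly from Theorem \ref{thm.improved} by taking $\mathbf P$ to be the property of being an admissible ordinal. Recall that $\pi$ is recursively Mahlo if it is admissible and every total $\pi$-recursive (i.e.\ $\Sigma_1$ over $\LL_\pi$) function $f:\pi \to \pi$ admits an admissible closure point $\alpha < \pi$. Since the earlier lemma already gives that $\LL_\pi$ is admissible, it suffices, given such an $f$, to produce an admissible $\alpha < \pi$ closed under $f$.

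First I would fix a total $\pi$-recursive $f:\pi \to \pi$ and set
\[ X = \{\alpha < \pi : \forall \beta < \alpha\; f(\beta) < \alpha\}. \]
The matrix ``$f(\beta)<\alpha$'' is $\Sigma_1$ (the graph of $f$ is $\Sigma_1$ and totality lets us assert $\exists\gamma(f(\beta)=\gamma\wedge\gamma<\alpha)$), and the outer quantifier is bounded by $\alpha$; applying $\Sigma_1$-collection in the admissible structure $\LL_\pi$ absorbs this bounded quantifier, so $X$ is $\Sigma_1$ over $\LL_\pi$. It is plainly closed under suprema, and it is unbounded because from any $\gamma<\pi$ admissibility of $\LL_\pi$ lets us close off under $f$ inside $\LL_\pi$, reaching a point of $X$ above $\gamma$. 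Thus $X$ satisfies the hypotheses of Theorem \ref{thm.improved}.

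The key remaining step is to verify that $\mathbf P(\alpha)\equiv$ ``$\alpha$ is admissible'' is $\ci$-decidable, with $\mathbf P(\pi)$ holding (the latter being the earlier lemma). For $\ci$-decidability I would exhibit the required total $\Delta:\N^\N\to\N$ as follows. Given $f$, use $\su$ (computable in $\ci$) to decide whether $f\in\WO$; if not, output $1$. If $f\in\WO$ with order type $\alpha$, apply the $^2E$-computable map sending a code for $\alpha$ to a code for the structure $\LL_\alpha$, the map invoked in the proof that $\mathcal M=\LL_\pi$. It then remains to decide, from a code of the countable structure $\LL_\alpha$, whether $\LL_\alpha\models\mathsf{KP}$. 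For each fixed $\Sigma_1$ formula the associated $\Delta_0$-collection instance unwinds, over a structure presented by a code, into a statement all of whose quantifiers range over $\N$; hence it is arithmetical in the code, uniformly in the numerical index of the formula. Quantifying over all formula codes adds one further number quantifier, so ``$\LL_\alpha$ is admissible'' is arithmetical in the code of $\LL_\alpha$, and manifestly independent of the chosen code. Since arithmetical predicates are computable in $^2E$ and $^2E$ is computable in $\ci$, the whole procedure is $\ci$-computable, establishing $\ci$-decidability.

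With these pieces assembled, Theorem \ref{thm.improved} applied to $X$ and $\mathbf P$ yields an $\alpha\in X$ with $\mathbf P(\alpha)$, that is, an admissible closure point of $f$ below $\pi$, which is exactly what recursive Mahloness demands. I expect the main obstacle to lie in the $\ci$-decidability step: one must confirm that the passage from a well-ordering code to a code for $\LL_\alpha$ is $^2E$-uniform for \emph{arbitrary} countable $\alpha$ (via effective transfinite recursion along the given order, not only along well-orders of length below $\pi$), and that the Kripke--Platek axioms translate uniformly into a single arithmetical condition on that code. Everything else is a direct invocation of the preceding results.
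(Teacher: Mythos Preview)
Your argument is correct, and the essential idea---apply Theorem \ref{thm.improved} with $\mathbf P$ the property of being admissible, after checking that admissibility is $\ci$-decidable and that $\pi$ itself is admissible---is exactly the paper's. Your verification of $\ci$-decidability (via $\su$ to test $f\in\WO$, $^2E$ to pass to a code for $\LL_\alpha$, and an arithmetical check of the KP axioms on that code) is more explicit than the paper's one-line assertion that ``the class of countable, admissible ordinals is $\ci$-decidable'', but it is the same content.

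The one genuine difference is in the definition of recursively Mahlo you unpack. The paper takes the club formulation directly: $\pi$ is recursively Mahlo iff every $\pi$-computable closed unbounded $X\subseteq\pi$ contains an admissible ordinal. Since Theorem \ref{thm.improved} is already stated for $\Sigma_1$ clubs, the corollary is then a one-line application. You instead start from the equivalent characterisation via total $\pi$-recursive functions $f:\pi\to\pi$ and manufacture the club of $f$-closure points, checking its $\Sigma_1$-definability via $\Sigma_1$-collection. This is sound but adds a detour: you are first translating the function hypothesis into a club and then invoking the theorem, whereas the paper simply begins with the club. Neither approach buys anything the other lacks; yours just has one extra (routine) step.
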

 
 \begin{proof} We have to prove that if $X \subseteq \pi$ is $\pi$-computable, closed and unbounded, then $X$ contains an admissible ordinal. Since the class of countable, admissible ordinals is $\ci$-decidable, this is a direct consequence of Theorem \ref{thm.improved}. \end{proof}
 Since being recursively Mahlo and other even stronger closure properties are also $\ci$-decidable, we may extend this argument in order to prove that $\pi$ satisfy these stronger properties, and that every closed unbounded subset of $\pi$ that are $\Sigma_1$ over $\LL_\pi$ also contain elements satisfying these stronger properties. We will not pursue this further here.
 \smallskip
 
  We will now consider an alternative way of expressing that $\pi$ must be a ``large" countable ordinal. What is ``large" is of course subject to the perspective one is taking.

\begin{definition}{\em An ordinal $\gamma$ is \emph{reflecting} if for all formulas $\Phi(x_1 , \ldots , x_n)$ and elements $X_1 , \ldots ,X_n$ in $\LL(\gamma)$, 
\[\LL_\gamma \models \Phi(X_1 , \ldots , X_n) \Rightarrow \exists \beta < \gamma [L_\beta \models \Phi(X_1 , \ldots , X_n).]\]}\end{definition}
Note that if $\gamma$ is reflecting, then $\gamma$ is admissible and recursively inaccessible.   
\begin{corollary} The closure ordinal $\pi$ of $\ci$ is reflecting.\end{corollary}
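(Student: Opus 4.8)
The plan is to derive reflection directly from the strong stationarity principle of Theorem \ref{thm.improved}, by encoding ``satisfaction in $\LL_\alpha$ of a fixed formula with fixed parameters'' as a $\ci$-decidable property of the ordinal $\alpha$. So fix a formula $\Phi(x_1,\ldots,x_n)$ and parameters $X_1,\ldots,X_n \in \LL_\pi$ with $\LL_\pi \models \Phi(X_1,\ldots,X_n)$. Since $X_1,\ldots,X_n \in \LL_\pi = \mathcal M$, each $X_i$ has a code that is computable in $\ci$. I would then define the class of ordinals $\bf P$ by declaring ${\bf P}(\alpha)$ to hold iff $X_1,\ldots,X_n \in \LL_\alpha$ and $\LL_\alpha \models \Phi(X_1,\ldots,X_n)$.

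The first substantial step is to verify that $\bf P$ is $\ci$-decidable. Given a code $f$ for an ordinal $\alpha$, I would use $^2E$ to compute a code for $\LL_\alpha$, invoking the $^2E$-computable map from codes for $\alpha$ to codes for $\LL_\alpha$ already used in the proof that $\mathcal M = \LL_\pi$. Using the $\ci$-computable codes for $X_1,\ldots,X_n$ together with $^2E$, one can decide whether each $X_i$ embeds into $\LL_\alpha$ and, if so, locate its position inside the computed code. For the fixed formula $\Phi$, the satisfaction relation $\LL_\alpha \models \Phi(X_1,\ldots,X_n)$ is arithmetic in the code for $\LL_\alpha$ together with the positions of the parameters, since the quantifiers of $\Phi$ range over the domain $D \subseteq \N$ of the code and hence become number quantifiers decidable by $^2E$. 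Composing these steps yields a single $\ci$-computable function $\Delta$ deciding $\bf P$.

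It then remains to observe that ${\bf P}(\pi)$ holds by our standing assumption, and to apply Theorem \ref{thm.improved} with $X$ taken to be the (class of all) ordinals below $\pi$, or equivalently any $\Sigma_1$ club such as the set of limit ordinals below $\pi$; this set is $\Delta_0$ over $\LL_\pi$, hence $\Sigma_1$, and is closed and unbounded. The theorem produces some $\alpha \in X$, in particular $\alpha < \pi$, with ${\bf P}(\alpha)$, that is $\LL_\alpha \models \Phi(X_1,\ldots,X_n)$. Taking $\beta = \alpha$ gives precisely the instance of reflection required, and since $\Phi$ and the $X_i$ were arbitrary, $\pi$ is reflecting.

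I expect the main obstacle to lie in the uniform $\ci$-decidability of the satisfaction predicate. I would take care that the decision procedure $\Delta$ is genuinely uniform in the code $f$ for $\alpha$ and does not secretly depend on a bound on $\alpha$: the parameters are handled through their fixed $\ci$-computable codes rather than through an explicitly chosen level at which they first appear, and the membership tests $X_i \in \LL_\alpha$ are carried out directly against the computed code for $\LL_\alpha$. Once this uniformity is in place, the conclusion is an immediate application of Theorem \ref{thm.improved}, so no further combinatorial work is needed.
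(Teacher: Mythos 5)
Your proof is correct and follows essentially the same route as the paper: both encode satisfaction of the fixed formula $\Phi(X_1,\ldots,X_n)$ at levels $\LL_\gamma$ as a $\ci$-decidable property containing $\pi$ and then invoke Theorem \ref{thm.improved} with a trivial $\Sigma_1$ club. The only cosmetic difference is that the paper ensures the parameters lie in $\LL_\gamma$ by restricting to $\gamma$ above a level $\alpha$ containing them, whereas you build the membership test $X_i \in \LL_\alpha$ into the property ${\bf P}$ itself.
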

\begin{proof}  If $\alpha < \beta$, $X_1 , \ldots , X_n$ are in $\LL_\alpha$ and $\LL_\pi \models \Phi(X_1 , \ldots , X_n)$, then the set of $\gamma > \alpha$ such that $\LL_\gamma \models \Phi(X_1 , \ldots , X_n)$ is $\ci$-decidable, contains $\pi$ and thus, by Theorem \ref{thm.improved}, contains an ordinal $\beta$ with $\alpha < \beta < \pi$.
\end{proof}
\noindent $\pi$ will not be the least reflecting ordinal:
\begin{corollary} Let $\pi$ be the closure  ordinal of $\ci$. If $\alpha < \pi$, then  there is a reflecting ordinal $\gamma$ with $\alpha < \gamma < \pi$. \end{corollary}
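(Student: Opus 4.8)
The plan is to apply Theorem \ref{thm.improved} with the property ${\bf P}(\gamma) \equiv$ ``$\gamma$ is reflecting'' and with $X$ the tail $\{\gamma < \pi : \gamma > \alpha\}$. The preceding corollary already supplies ${\bf P}(\pi)$, and the tail $X$ is plainly closed and unbounded in $\pi$; since $\alpha \in \LL_\pi$, the relation ``$\gamma > \alpha$'' is $\Delta_0$ with parameter $\alpha$, so $X$ is $\Sigma_1$ over $\LL_\pi$. Granting that ${\bf P}$ is $\ci$-decidable, Theorem \ref{thm.improved} then produces a reflecting ordinal $\gamma \in X$, that is, $\alpha < \gamma < \pi$, which is exactly the statement.

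So the only genuine work is to verify that being reflecting is an $\ci$-decidable property of ordinals, i.e.\ to exhibit the $\ci$-computable $\Delta$. First I would reject any $f \notin \WO$ using $\su$, which is $\ci$-computable. For $f \in \WO$ coding $\gamma$, one computes with $^2E$ alone a code for $\LL_\gamma$ together with codes for all $\LL_\beta$, $\beta \leq \gamma$, via the $^2E$-computable map sending a code for an ordinal to a code for the corresponding level of $\LL$ noted earlier. The key point is that the first-order satisfaction relation $\LL_\delta \models \Phi(\vec X)$, as a relation in the code for $\LL_\delta$, the number coding $\Phi$, and the indices of the parameters $\vec X$ in the countable structure $\LL_\delta$, is computable in $^2E$ \emph{uniformly}: it is defined by recursion on the syntactic complexity of $\Phi$, each quantifier being handled by a single application of $^2E$ to the (total) truth-value function of the immediate subformula. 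With this in hand, the reflecting condition
\[
\forall \Phi\, \forall \vec X \in \LL_\gamma\, \bigl[\, \LL_\gamma \models \Phi(\vec X) \rightarrow \exists \beta < \gamma\,(\LL_\beta \models \Phi(\vec X)) \,\bigr]
\]
has all of its quantifiers ranging over $\N$ (codes of formulas, indices of elements of $\LL_\gamma$, codes of ordinals $\beta < \gamma$) over a matrix computable in $^2E$, and is therefore itself computable in $^2E$, a fortiori in $\ci$.

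The step I expect to need the most care is precisely this uniform $^2E$-computability of satisfaction, together with the bookkeeping that ``for all formulas $\Phi$'' is a single number quantifier over one $^2E$-computable relation, rather than an unbounded alternation of set quantifiers; the dependence of the length of $\vec X$ on $\Phi$ must be folded into a single code. Once that is pinned down, closedness and unboundedness of the tail $X$ and its $\Sigma_1$-definability over $\LL_\pi$ are immediate, and the appeal to Theorem \ref{thm.improved} is routine, completing the proof.
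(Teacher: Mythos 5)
Your proof is correct and follows essentially the same route as the paper: apply Theorem \ref{thm.improved} to the property ${\bf P}(\gamma) \equiv$ ``$\gamma$ is reflecting'' (with ${\bf P}(\pi)$ supplied by the preceding corollary) and to the tail $X = \{\gamma < \pi : \gamma > \alpha\}$, which is closed, unbounded and $\Sigma_1$ over $\LL_\pi$. Your detailed verification that reflection is $\ci$-decidable (in fact $^2E$-decidable on $\WO$-codes, via the uniform $^2E$-computability of satisfaction for coded countable structures) fills in precisely the step the paper asserts without proof.
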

\begin{proof}

This is also a consequence of Theorem \ref{thm.improved}, since being reflecting is $\ci$-decidable. Indeed, if $X \subseteq \pi$ is closed, unbounded and $\Sigma_1$ over $\LL_\pi$, then $X$ contains arbitrarily large reflecting ordinals.

\end{proof}
\begin{remark}{\em These results do of course not imply that $\LL_\pi$ has an elementary substructure, or even a substructure satisfying the same first order sentences. The \emph{theory} of $\LL_\pi$ is not $\ci$-decidable, so there is no way to unify these arguments to all formulas simultaneously.}\end{remark}

\section{Classes of  functionals of type 3}\label{3.}
\subsection{Motivation}\label{3.1}
We introduced the functional $\ci$ in Section \ref{2.} and illustrated its computational strength through an analysis of the companion in Section \ref{Comp}. In this section we will give an analysis of computations relative to  $\ci$ resembling an \emph{operational semantics}. Our approach is inspired by the success-story of using \emph{nested sequential procedures} for modelling functionals definable in LCF (Scott, \cite{scott}) or equivalently in PCF (Plotkin, \cite{plotkin}) from objects of type 1. For an introduction to nested sequential procedures, see \cite[Chapter 6]{LN}. 
\smallskip

Since we are only concerned with functionals of type $\leq 3$ in this paper, we can forget the qualifier `nested', while we will add the qualifier `hyper' in order to deal with discontinuity. We will aim for more and more restricted concepts of \emph{hyper-sequential procedures} until we find a characterisation of the functionals of type 3 that are computable in $\ci$ and some functional of type 2. The gain will mainly be that we obtain a more civilised, and less ad hoc, way of expressing relative computability for certain functionals of type 3 than when we refer to the Kleene schemes directly. We will use this to give mathematical support to the informal claim that if realisers of classical theorems based on compactness arguments are computable in $\ci$, then the full power of $\ci$ is required.

\medskip

A functional $\Phi$ of type 3 is \emph{normal} if $^3E$ is computable in $\Phi$, where 
\[^3E(F) = \left \{ \begin{array}{ccc} 0 & {\rm if} & \forall f \in \N^\N (F(f) = 0) \\ 1&{\rm if}&\exists f \in \N^\N(F(f) > 0)\end{array}\right . , \]
and where $F$ is assumed to be total.

The set  of functionals of type 3 that are neither normal nor computable in type 2 objects is mainly unexplored with respect to computability-theoretical properties. The classical object of this kind is Gandy's Superjump $\mathbb S$.  $\mathbb S$ is of course a natural functional in the context of higher  order computability theory. Recently, examples that are natural from other perspectives have emerged. In \cite[\S 3]{P1} we introduced classes of realisers  $\Theta$ for the general Heine-Borel theorem and a weaker class of functionals $\Lambda$ that compute realisers for the Vitali Covering theorem. In \cite{N18} we  also considered    functionals $\Xi$ that serve as realisers for the Lindel\"of Lemma for Baire Space. In this paper we introduced $\ci$, which, under the name IND, was proved in \cite{N18} to compute Lindel\"of realisers $\Xi$. This plethora of elements in a so far unexplored class of functionals justifies a more coherent study of this class. We will return to some of these functionals in Section \ref{7.2}.
\medskip

 Hartley \cite{Hartley} investigated the fully typed hierarchy of hereditarily countably based functionals, based on a definition due to Stan Wainer, and obtained some general results. For instance, he proved that if we assume the Continuum hypothesis together with  ZFC, $\Phi$ is countably based if and only if $^3E$ is not computable in $\Phi$ and any functional of type 2. 
\smallskip

The original definition of the countably based functionals is by a generalisation of the definition of the \emph{continuous} functionals e.g. as based on \emph{domain theory}, see \cite[Chapter 10]{LN} for a recent introduction. In this paper, we will only be interested in objects of types 0, 1, 2 and 3, and we define the countably based functionals for these cases, suiting our own purposes:
\begin{definition}{\em  All integers are countably based. Moreover
\begin{enumerate}
\item All total functions $f:\N \rightarrow \N$ are countably based.
\item All partial functionals $F$ mapping a subset of $\N^\N$ to $\N$ are countably based.
\item Let $\Phi$ be a partial functional taking countably based functionals of type 2 as arguments and yielding integers as values. $\Phi$ is countably based if we for each $F$ and $n$ such that $\Phi(F) = n$ find a countable set $A \subseteq \N^\N$ such that $F$ is total on $A$ and such that for all $G$ of type 2, if $G$ is total on $A$ and agrees with $F$ on $A$, then $\Phi(G) = n$.
\end{enumerate}
In (3), a \emph{base element} for $\Phi$ will be a countable set $A$ together with the restriction of an $F$ to $A$ with the property described.}\end{definition}
$^3E$ will not be countably based, since in order to know that $^3E(O^2) = 0$  we need to know that $O^2(f) = 0$ for all $f \in \N^\N$.

\medskip

 One problem with the countably based functionals $\Phi$ is that the  base elements of $\Phi$ are not well structured as individual sets, and a suitable class of base elements for $\Phi$ may not be well structured as a class. Much of the way of thinking inherited from the computability theory of the continuous functionals is useless. The aim of this section is to introduce a more restricted class, the hyper-sequential functionals, where we have added some further structure. Examples of hyper-sequential functionals will be the Superjump and $\ci$. However, the first concept we introduce will be too general for our purpose, for instance, all functionals of type 3 computable using an \emph{infinite time Turing machine} the way suggested by Welch \cite{Welch} will be hyper-sequential.
 
 \subsection{Hyper-sequential functionals}\label{3.2}
 \subsubsection{The definitions}
 In this section we will define what we mean with a hyper-sequential procedure. A transfinite calculation using a functional $F$ as an oracle can be viewed as a sequence of queries of the form  ``what is $F(f)?$", where the next query will depend on the answer to the previous ones. We will capture such deterministic procedures with our concepts defined in \ref{def.proc}.  Our aim will be to isolate the procedures that will correspond to computations relative to $\ci$, and in order to fully capture those , our calculations also must contain some \emph{documentation}. In a computation $\{e\}(\ci , F , \vec f , \vec a)$, there may be subcomputations with extra arguments $g$ or $b$ of type 1 or 0. Our abstract calculations will contain a LOG of functions $g$, and the use of this LOG will be to show that procedures corresponding to Kleene-computations in $\ci$ are definable at the level of $\Pi^1_1$. This will be made precise later. In order to formally describe this LOG we take the liberty to add an extra element $\ast$ to $\N$, and to claim that objects involving this $\ast$ will be of a certain complexity, for instance $\Pi^1_1$, without going to the trouble of coding.
 \begin{definition}{\em \hspace*{2mm}\label{def.proc}
 \begin{itemize}
 \item[a)] A \emph{string} is a sequence $\{(f_\beta , a_\beta)\}_{\beta < \alpha}$ where $\alpha$ is a countable ordinal, each $f_\beta \in \N^\N$ and each $a_\beta \in \N\cup\{\ast\}$. We call $f_\beta$ a \emph{query}, and sometimes writes it as $F(f_\beta) = ?$. 

   \item[b)] A \emph{hyper-sequential procedure} is a set $\Omega$ of strings where each string will be given an integer value, and  such that whenever  $\{(f_\beta,a_\beta)\}_{\beta < \alpha}$ and $ \{(f'_\beta,a'_\beta)\}_{\beta < \alpha'}$ are in $\Omega$  they are either equal or there is a  $\beta < \min\{\alpha,\alpha'\}$ such that $f_\beta = f'_\beta$, $a_\beta \neq a'_\beta$, $a_\beta \neq \ast$, $a'_\beta \neq \ast$ and $(f_\gamma,a_\gamma) = (f'_\gamma , a'_\gamma)$ for all $\gamma < \beta$. 
 Formally,  will let $\Omega$ be a set of pairs $(t,b)$ where $t$ is a string and $b$ is the associated value.
 \item[c)] If $\{(f_\beta , a_\beta)\}_{\beta < \alpha}$ is a string and $F \in Tp(2)$, we say that the string \emph{matches} $F$ if $F(f_\beta) = a_\beta$ for all $\beta < \alpha$ with $a_\beta \in \N$. 
 \item[d)] If a string $t$ is in a a hyper-sequential procedure $\Omega$, has a value $a$ and matches $F$, we call $t$ a {\em calculation}, calculating $\Omega(F) = a$.
  \item[e)] If $\Omega$ is a hyper-sequential procedure, then $\Omega$ \emph{defines} (or \emph{computes}) the partial functional $\Phi(F) = \Omega(F)$ of type 3. When $\Phi(F)$ is defined, the calculation of $\Omega(F)$ will be unique.
 \item[f)] A \emph{total} functional of type 3 is hyper-sequential if there is a hyper-sequential procedure that defines it. 
 \item[g)] If $t = \{(f_\beta , a_\beta)\}_{\beta < \alpha}$ is a calculation, and $a_\beta = \ast$ we say that $\beta$ is in the LOG of $t$.
 
 \end{itemize}
 }\end{definition}
 \smallskip
We will  from now on  use the words \emph{procedure} and \emph{sequential} in the meaning of hyper-sequential procedure and hyper-sequential.
 \begin{remark}{\em A procedure can be viewed as a \emph{strategy} for a transfinite \emph{game} where Player I, the computing device, plays \emph{queries} and Player 2, the input, \emph{answers} each query using $F$. In some matches of the games, corresponding to the calculations, Player 1 wins in  the sense of providing an output, while in other matches, Player II wins because it either stops after countably many steps without a value, or  it goes on through $\aleph_1$ many steps. We will discuss this further when we consider procedures with more structure. If we then still use the picture of games with rules, the LOG will represent places where Player 1 will enter a sub-game following different rules, and the LOG will help the referee to verify that the whole match is played according to the general, nested,  rules of the game.
 
 Note that if $t$ is a string that is an initial segment of several calculations, then the next $f_\beta$ will be the same for all such extensions, and if $\beta$ is in the LOG of one of them, it will be in the LOG of all extensions.
 
 }\end{remark}

 \begin{definition}{\em
 Let $\{(f_\beta , a_\beta)\}_{\beta < \alpha}$ be a  string.

 A \emph{sub-string} is a sequence $\{(f_\gamma,a_\gamma)\}_{\gamma < \beta}$ for some $\beta \leq \alpha$.

 }\end{definition}
 
 We can concatenate strings in the usual way: If we for each ordinal $\gamma < \gamma_0$  have a string $\{(f_{\gamma,\beta} , a_{\gamma,\beta})\}_{\beta < \alpha_\gamma}$ we let the \emph{concatenation} $\{(f_\beta , a_\beta)\}_{\beta < \alpha}$ be defined by
 \begin{itemize}
 \item[-] $\alpha = \sum_{\gamma < \gamma_0}\alpha_\gamma$
 \item[-] If $\beta = \sum_{\gamma < \gamma_1}\alpha_\gamma + \beta_1$ where $\gamma_1 < \gamma_0$ and $\beta_1 < \alpha_{\gamma_1}$, then $(f_\beta , a_\beta) = (f_{\gamma_1 , \beta_1},a_{\gamma_1 , \beta_1})$.
 
 \end{itemize}
  
 \bigskip

 We will prove that the class of sequential functionals of type 3 is closed under Kleene-computability as defined through the schemes S1 - S9. To be more precise, we will prove that if $\vec \Phi = (\Phi_1 , \ldots , \Phi_n)$ consists of sequential functionals and \[\lambda F. \{e\}(\vec \Phi,F,\vec f , \vec a)\] is total, then it is itself sequential. To make this precise, we need to extend S8 to deal with general inputs of type 3. For the sake of notational simplicity, we assume that the arguments of our computations will be of the form as above, that we drop the scheme S6 of permutation and that we use an alternative indexing for scheme S8 so that we can read out from the index for which of the arguments in the list $\vec \Phi$ the oracle call is made. (Alternatively we could modify S6 to cater for permutations of the list of inputs of all four types.) We still leave out S5, primitive recursion, partly because it is redundant in the presence  of S9, and partly because it can be handled in analogy   to  composition S4. Thus we add the following scheme to Definition \ref{Kleene}, while replacing the one occurrence  of $\ci$ with a sequence $\vec \Phi$ of functionals of type 3:
 \begin{itemize}
 \item[S8] If $e = \langle 8,3,i,d\rangle $ then 
 \[\{e\}(\vec \Phi , \vec F , \vec f , \vec a) = \Phi_i(\lambda f.\{d\}(\vec \Phi , \vec F , f,\vec f , \vec a))\]
 \end{itemize}
 
In the original definition by Kleene, this is only supposed to make sense when $\{d\}(\vec \Phi , \vec F , f,\vec f , \vec a)$ terminates for all $f \in \N^\N$, but when we are working with countably based $\Phi_i$ we normally only require that a base element is a sub-function of $\lambda f. \{d\}(\vec \Phi, \vec F , \vec f , \vec a)$.
 
 \smallskip
 
 As we will see in the sequel, being sequential the way we define it here is quite general, and thus the fact that this class is closed under Kleene computability may be of restricted interest. However, we will later refer to the construction of procedures imbedded in the proof of Lemma \ref{lemma.hyp} in situations where we will show that much more restricted classes of functionals still are Kleene closed.
 \smallskip
 
 Since we, in this section, are primarily interested in functionals of type 3 computable in a given sequence of sequential functionals of the same type, we restrict the number of arguments of type 2 to one. We can do this because the number of type 2 arguments will not increase as we move down the paths of the computation tree. The number of arguments of type 0 and of type 1 may increase, so we need to consider arbitrarily long finite lists of such input arguments.
 \begin{definition}{\em \label{Def.3.3}
 Let $\vec \Phi = (\Phi_1 , \ldots , \Phi_n) $ be a sequence of sequential functionals defined from the procedures $\Omega_1 , \ldots , \Omega_n$. Let $F$ be of type 2 and let $\vec f$, $\vec a$ be finite sequences of objects of type 1 and 0 resp.
 Assume that $\{e\}(\vec \Phi , F , \vec f , \vec a) = b$. By recursion on the length of this computation we define the calculation $t_{e, \vec \Phi , F , \vec f , \vec a}$ with value $b$ as follows, where we use $\what$ to denote concatenation of strings (recall that \emph{calculations} are strings that, in the given context, have values) : 
 \begin{itemize}
 \item[-] If $e$ is an index for an initial computation, i.e.\ for S1, S2, S3 or S7, we let $t_{e, \vec \Phi , F , \vec f , \vec a}$ be the empty string, i.e.\ with $\alpha = 0$.
 \item[-]If \[\{e\}(\vec \Phi , F , \vec f , \vec a) = \{e_1\}(\vec \Phi , F , \vec f , \{e_2\}(\vec \Phi , F, \vec f , \vec a) , \vec a), \] let $c = \{e_2\}(\vec \Phi , F \vec f , \vec a) $. Let \[t_{e, \vec \Phi , F , \vec f , \vec a} = t_{e_2, \vec \Phi , F , \vec f , \vec a}\what t_{e_1, \vec \Phi , F , \vec f , c,\vec a}.\]
 \item[-] In the case of S9, we just use the calculation for the immediate subcomputation.
 \item[-] Let $\{e\}(\vec \Phi , F , \vec f , \vec a) = F(\lambda c.\{e_1\}(\vec \Phi , F , \vec f , c , \vec a))$. Let $f(c) = \{e_1\}(\vec \Phi , F , \vec f , c , \vec a)$. Then 
 \[t_{e, \vec \Phi , F , \vec f , \vec a} = t_{e_1, \vec \Phi , F , \vec f , 0,\vec a}\what t_{e_1, \vec \Phi , F , \vec f , 1,\vec a}\what \underbrace{\cdots \cdots }_\omega\what(f,F(f)).\]
 \item[-] Let $\{e\}(\vec \Phi , F , \vec f , \vec a) = \Phi_i(\lambda g.\{e_1\}(\vec \Phi , F , g , \vec f , \vec a))$. Let $H(g) = \{e_1\}(\vec \Phi , F , g , \vec f , \vec a)$ and let $\{(g_\beta , b_\beta)\}_{\beta < \alpha}$ be the calculation  in $\Omega_i$  that is  matching $H$.
 \newline
 We let $t_{e, \vec \Phi , F , \vec f , \vec a}$ be the concatenation of \[\{(g_\beta,\ast) \what t_{e_1, \vec \Phi , F , g_\beta, \vec f , \vec a}\}_{\beta < \alpha}.\] \end{itemize}
 This ends the definition.
  
 }\end{definition}
 \begin{remark}\label{remark.dummy}{\em   We inserted the pairs $(g_\beta , \ast)$ in the LOG in order to remind us of the fact that we  at that stage are simulating a subcomputation with an extra argument $g_\beta$. We need the information about this extra argument  in order to say that a string is `correct', in a sense made precise later. }\end{remark}
 \begin{lemma}\label{lemma.hyp} Let $e$,$\vec \Phi$, $\vec f$ and $\vec a$ be fixed as in Defintion \ref{Def.3.3}. Then the set  \[\{(t_{e, \vec \Phi , F , \vec f , \vec a},b) : \{e\}(\vec \Phi,F, \vec f , \vec a)= b\}\] will be a procedure. \end{lemma}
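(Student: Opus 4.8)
The plan is to verify the coherence condition of Definition~\ref{def.proc}(b) for the set $\Omega = \{(t_{e,\vec\Phi,F,\vec f,\vec a},b) : \{e\}(\vec\Phi,F,\vec f,\vec a)=b\}$, together with the requirement that each string carries a single value. Since $e,\vec\Phi,\vec f,\vec a$ are fixed, two members of $\Omega$ arise from two type-$2$ arguments $F,F'$, so everything reduces to comparing $t := t_{e,\vec\Phi,F,\vec f,\vec a}$ with $t' := t_{e,\vec\Phi,F',\vec f,\vec a}$. I would prove, by transfinite induction on the length of the $F$-computation and simultaneously for every choice of the extra type-$0$ and type-$1$ arguments that occur in subcomputations, the following two statements for all $F'$ for which the corresponding computation is defined: (a) $t$ and $t'$ are either equal, or there is a first position $\beta$ at which they differ, and there $f_\beta = f'_\beta$ while the answers are distinct genuine integers ($a_\beta \ne a'_\beta$, both $\ne \ast$); and (b) if $t=t'$ as strings then the two computations return the same value. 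Claim~(b) must be carried through the induction because composition S4 branches on the value $c$ of a subcomputation, and the oracle step for $\Phi_i$ branches on the answers that $\Omega_i$ receives.

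First I would record the elementary \emph{determinism} consequences of the procedure axiom for each $\Omega_i$: no calculation is a proper sub-string of another, and if a sub-string $s$ is a common initial segment of two calculations then the query at position $|s|$ agrees in both, and subsequent branching occurs only through differing genuine integer answers. Consequently the calculation of $\Omega_i$ matching a function $H$ is uniquely determined, and its length and its successive queries depend only on the answers $H$ returns to the queries already posed. These are exactly the facts needed to synchronise the outer construction for $F$ and $F'$ in the oracle step.

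The induction then proceeds by cases on the scheme indexed by $e$. The initial cases S1, S2, S3, S7 give the empty string, with value depending only on the fixed $\vec f,\vec a$, so (a) and (b) are immediate; S9 inherits its calculation verbatim from a subcomputation. For composition S4 one has $t = t_{e_2,\dots}\what t_{e_1,\dots,c,\dots}$, applies the hypothesis to the $e_2$-block, and, if that block is common to $t$ and $t'$, uses (b) to conclude $c=c'$ before applying the hypothesis to the $e_1$-block; any first divergence is thereby located at a genuine $F$-query. The type-$2$ oracle step is similar: the $\omega$ subcomputations $t_{e_1,\dots,c,\dots}$ are compared blockwise, and if they all coincide then (b) forces $f=f'$, so the strings can diverge only at the terminal query $(f,F(f))$ versus $(f,F'(f))$, which differ precisely when $F(f)\ne F'(f)$.

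The main work, and the step I expect to be the real obstacle, is the oracle step for a type-$3$ argument $\Phi_i$, where $t$ is the concatenation over $\beta<\alpha$ of the blocks $(g_\beta,\ast)\what t_{e_1,\vec\Phi,F,g_\beta,\vec f,\vec a}$ and $t'$ the analogous concatenation for $F'$. Here I would argue by transfinite induction on the block index: assuming the blocks agree below $\beta_0$, hypothesis~(b) applied to each subcomputation gives that $H$ and $H'$ return the same answer to the common query $g_\beta$, so $\Omega_i$ sees identical histories; the determinism of $\Omega_i$ then forces $g_{\beta_0}=g'_{\beta_0}$ and synchronises termination, so either all blocks agree---yielding $t=t'$ and, since both $\Omega_i$-calculations coincide, equal values and hence (b)---or there is a genuine first block $\beta_0$ present in both strings whose leading LOG entries $(g_{\beta_0},\ast)$ agree while the subcomputations differ. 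Applying hypothesis~(a) to that subcomputation locates the first difference at a genuine $F$-query with distinct integer answers, and pulling it back through the concatenation gives exactly the divergence demanded by Definition~\ref{def.proc}(b). The delicate points are that the $\ast$-entries never constitute a branch point, since equal histories force equal $\Phi_i$-queries, and that termination of the two $\Omega_i$-calculations stays synchronised; both rest on the determinism extracted from the procedure axiom for $\Omega_i$ rather than on any property of $F$. Finally, (a) for arbitrary $F,F'$ is the coherence clause of Definition~\ref{def.proc}(b), and (b) makes the value attached to each string well defined, so $\Omega$ is a procedure.
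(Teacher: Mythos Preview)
Your proposal is correct and follows essentially the same route as the paper: induction on the ordinal rank of the computations, case analysis on the Kleene scheme, and, in the type-$3$ oracle step, using the procedure axiom for $\Omega_i$ to synchronise the two $\Omega_i$-calculations before appealing to the induction hypothesis on the subcomputation for the first block where they differ. The one organisational difference is that you carry the value-agreement claim~(b) explicitly through the induction, whereas the paper establishes it in passing within each case; making it explicit is cleaner and costs nothing.
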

\begin{proof}
 Assume that both $\{e\}(\vec \Phi , F , \vec f , \vec a)$ and $\{e\}(\vec \Phi , G , \vec f , \vec a)$ terminate. 
 We prove by induction on the ordinal ranks of the computations that the corresponding calculations  satisfy Definition \ref{3.2} b). The proof is split into cases corresponding to the Kleene schemes.
\bigskip

If $e$ is an index for an initial computation, the claim is trivial, and the induction step is trivial in the case of application of S9.
\smallskip

 Let $e$ be an index for composition, and let $e_1$ and $e_2$ be as in the construction.
 If the calculations for $\{e_2\}(\vec \Phi , F , \vec f , \vec a)$ and $\{e_2\}(\vec \Phi, G , \vec f , \vec a)$ are different, then by the induction hypothesis they split at a first point, and there the $f$-parts are the same while the $a$-parts differ.  Since these calculations are initial segments of the calculations under consideration, the concatenated calculations   also   satisfy the definition. 

 If the calculations for the $e_2$-computation are equal, then, by the indiction hypothesis, the values are the same, $c$, and then our conclusion follows from the induction hypothesis for $\{e_1\}(\vec \Phi , F/G, \vec f , \vec a)$.
\smallskip

 Application of $F/G$: In  this case, we construct calculations as the concatenation of $\omega + 1$ items, first the corresponding calculations for each $c \in \N$, and at the end, pairs $(f,F(f))$ and $(f',G(f'))$ respectively. If there is a least $c$ where the corresponding two calculations differ, the $f$-parts will agree while the $a$-parts will differ at a minimal location in these calculations, by the induction hypothesis. Then the $f$-parts will agree and the $a$-parts will differ at the corresponding minimal location in the concatenated calculation. If the two concatenations of the calculations inherited for each $c$ are equal, it follows from the induction hypothesis that the arguments $f$ and $f'$ for $F$ and $G$ resp. are equal, so at the top pair $(f,F(f))$  and $(f',G(f'))$ we will have that the query parts are equal. 
\smallskip

Application of $\Phi_i$: Let $\{(g_\beta , b_\beta)\}_{\beta < \alpha}$ and $\{(g'_\beta,b'_\beta\}_{\beta < \alpha'}$ be the two  calculations  in $\Omega_i$  matching the corresponding functionals $H$ and $H'$ as in the definition in this case. First we see that if the two concatenated calculations agree as far as they both go, we can use the induction hypothesis, sub-induction on $\beta < \min\{\alpha,\alpha'\}$ and the fact that $\Omega_i$ is a procedure to show that $g_\beta = g'_\beta$ and that $H(g_\beta) = H'(g'_\beta)$ for all $\beta$. Since $\Omega_i$ is a procedure, it follows that $\alpha = \alpha'$, that the two concatenated calculations are equal and that the values are the same.
 
 If the two concatenated calculations differ, there will be a least $\beta < \min\{\alpha,\alpha'\}$ such that they differ in the sections computing $H(g_\beta)$ and $H'(g'_\beta)$. Then $g_\beta = g'_\beta$, so by the induction hypothesis there is a least location in those sections where they differ, and there the $f$ parts are equal while the $a$-parts differ. So, the calculations constructed will satisfy the definition.
 \end{proof}
 \begin{theorem} \label{3.4} The class of hyper-sequential functionals of type 3 is closed under relative Kleene-computability.
 \end{theorem}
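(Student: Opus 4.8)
The plan is to recognise that the genuine structural content of this closure result has already been delivered by Lemma \ref{lemma.hyp}, so that what remains is only to check that the procedure it produces actually \emph{computes} the intended functional in the sense of Definition \ref{def.proc} e)--f). So I would fix hyper-sequential $\vec \Phi = (\Phi_1 , \ldots , \Phi_n)$ with defining procedures $\Omega_1 , \ldots , \Omega_n$, fix an index $e$ together with parameters $\vec f , \vec a$, and assume that the type-3 functional $\Psi = \lambda F. \{e\}(\vec \Phi , F , \vec f , \vec a)$ is total. First I would let $\Omega_e$ denote the set of calculations $\{(t_{e, \vec \Phi , F , \vec f , \vec a}, b) : \{e\}(\vec \Phi , F , \vec f , \vec a) = b\}$ furnished by Definition \ref{Def.3.3}; by Lemma \ref{lemma.hyp} this is a procedure, so the only thing still in doubt is that the functional it defines is exactly $\Psi$.

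The first step toward this is a straightforward induction on the ordinal rank of a terminating computation $\{e\}(\vec \Phi , F , \vec f , \vec a) = b$, following the case split of Definition \ref{Def.3.3}, to establish two facts: (i) the string $t_{e, \vec \Phi , F , \vec f , \vec a}$ \emph{matches} $F$ in the sense of Definition \ref{def.proc} c), and (ii) its associated value is $b$. For the initial schemes and for S9 this is immediate; for composition it follows by concatenating the two inductively matching substrings; in the application-of-$F$ case the appended final pair is exactly $(f , F(f))$ with $f(c) = \{e_1\}(\vec \Phi , F , \vec f , c , \vec a)$, which by definition matches $F$, while the preceding $\omega$-block matches $F$ by the induction hypothesis. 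The one case needing care is application of $\Phi_i$: there every pair whose answer lies in $\N$ occurs inside one of the subcalculations $t_{e_1, \vec \Phi , F , g_\beta , \vec f , \vec a}$ (which match $F$ by the induction hypothesis), whereas the inserted pairs $(g_\beta , \ast)$ carry the LOG-marker $\ast$ and are therefore simply ignored by the matching relation. Hence $t_{e, \vec \Phi , F , \vec f , \vec a}$ lies in $\Omega_e$ as a calculation matching $F$ with value $b = \Psi(F)$.

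Next I would argue uniqueness, which upgrades ``there is a matching calculation'' to ``the procedure defines $\Psi$''. Suppose $t$ and $t'$ are two distinct strings of $\Omega_e$ that both match $F$. Because $\Omega_e$ is a procedure, they branch at some least $\beta$ where the queries coincide but the answers satisfy $a_\beta \neq a'_\beta$ with $a_\beta , a'_\beta \in \N$; the branching clause of Definition \ref{def.proc} b) explicitly forbids $\ast$ at the splitting point. But then $a_\beta = F(f_\beta) = a'_\beta$, a contradiction. Thus for each total $F$ the calculation matching $F$ is unique, and by the previous paragraph it is $t_{e, \vec \Phi , F , \vec f , \vec a}$ with value $\Psi(F)$; this is precisely what it means for $\Omega_e$ to define $\Psi$. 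Since $\Psi$ was assumed total, $\Psi$ is hyper-sequential, and as $e$, $\vec \Phi$, $\vec f$, $\vec a$ were arbitrary the class is closed under relative Kleene-computability.

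I expect the main obstacle to be entirely contained in the application-of-$\Phi_i$ case of the matching induction: one must be sure that the calculation $\{(g_\beta , b_\beta)\}_{\beta < \alpha}$ selected inside $\Omega_i$ as the one matching $H(g) = \{e_1\}(\vec \Phi , F , g , \vec f , \vec a)$ really is available — that is, that $H$ is defined on exactly the queries $g_\beta$ that $\Omega_i$ poses, so that each subcomputation $\{e_1\}(\vec \Phi , F , g_\beta , \vec f , \vec a)$ terminates and the induction hypothesis applies. This is guaranteed because the top computation terminates by assumption, forcing all subcomputations it actually invokes to terminate; the relaxed reading of S8 adopted here, under which a procedure $\Omega_i$ inspects only the countably many queries along the branch matching $H$ rather than demanding totality of $H$, is exactly what lets the construction proceed with no additional totality hypotheses on the intermediate type-2 functions.
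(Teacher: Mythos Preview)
Your proposal is correct and follows exactly the paper's approach: the paper's proof is simply ``Immediate from Lemma \ref{lemma.hyp}'', and you have recognised this, merely spelling out in detail why that lemma suffices (that the constructed string matches $F$ with the right value, and that the procedure property forces uniqueness of the matching calculation). The extra verification you supply is sound and is precisely what the paper leaves implicit under the word ``immediate''.
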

\begin{proof} Immediate from Lemma \ref{lemma.hyp}.\end{proof}
\begin{remark}{\em We can deduce, from the proof of Theorem \ref{3.4}, that all functionals of type 3 computable in functionals of lower types will be hyper-sequential. }\end{remark}
 \subsubsection{Mixed  types}\label{3.3}
 Some of the objects we are interested in are of types at  level $\leq 3$ that are not pure, $\ci$ is one prominent example. There are two natural ways to extend the concept of sequential functionals to objects of such types. One is to identify such types as the fixed points of computable retracts on the corresponding pure types, the retracts being explicitly definable as Kleene-computable where the schemes S5 and S9 are not needed. Then an object will be, by definition, sequential if the representation in the pure type is so.
 \newline
 The other alternative is to extend the intuition  of sequentiality to objects of these general types. A type like this will be of the form \[\sigma_1 , \ldots , \sigma_n \rightarrow \N,\]
 where each $\sigma_i$ has level $\leq 2$. Thus a calculation will be a well-ordered set of queries with answers where each query is of the form $F_i(\vec f) = ?$ for some $i$, varying with the query. Each $\vec f$ will consist of functions and/or integers, and the functions may be of one or several number variables. To keep track of all this in its full generality will require some heavy notation, but there will be no genuine mathematical problems. Given this, we can define what we mean with a \emph{procedure} adjusted to each type, and then the sequential objects of that type.
It is obvious that the two approaches are equivalent, but not being pressed, we prefer to omit all details. In some of our examples, we will use the latter, intuitive approach.
 \subsubsection{Examples}\label{Ssec3.4}
 Our first example is what motivated us to isolate the concept of hyper-sequential functionals:
 \begin{theorem}The functional $\ci$ is hyper-sequential.
 \end{theorem}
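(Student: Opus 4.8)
The plan is to exhibit an explicit hyper-sequential procedure $\Omega_{\ci}$ whose associated functional is $\ci$, working directly with the intuitive notion of a procedure as a strategy that issues queries of the form $F(f)=?$ in transfinite time. Recall from Definition \ref{def.ind} that $\ci(F)=f_{\alpha_F}$, where the sequence $\{f_\beta\}$ is generated by iterating $\max\{f_\beta,F(f_\beta)\}$ and taking suprema at limits, and that $\alpha_F$ is the least closure ordinal. The key observation is that this iteration is \emph{itself} a transfinite deterministic procedure of queries to $F$: at stage $\beta$ we already know $f_\beta$ (it is determined by the answers to all earlier queries), we issue the query $F(f_\beta)=?$, receive an answer $a_\beta\in\Ca$, and thereby determine $f_{\beta+1}=\max\{f_\beta,f_\beta\cup a_\beta\}$; at limit stages we pass to the supremum of the $f_\gamma$ already computed. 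The procedure halts, outputting (a fixed integer coding) the relevant value of $\ci(F)$, at the first $\beta$ where the answer $a_\beta$ satisfies $a_\beta\le f_\beta$, i.e.\ $F(f_\beta)\le f_\beta$.

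First I would set up the string for a given total $F$. Since $\ci$ as a type-3 object actually returns a function in $\Ca$ rather than an integer, I would (following Remark \ref{Remark.2.1} and the mixed-types discussion of \ref{3.3}) treat $\ci$ via its integer-valued components $\lambda(F,n).\ci(F)(n)$, so that for each fixed $n$ the procedure outputs the bit $f_{\alpha_F}(n)$; the query structure is identical for all $n$, only the final read-off differs. The string matching $F$ is then $t_F=\{(f_\beta,a_\beta)\}_{\beta<\alpha_F+1}$, where each $f_\beta$ is the query, $a_\beta=F(f_\beta)$ is the answer, and the value attached to $t_F$ is $f_{\alpha_F}(n)$. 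Crucially, because $f_\beta$ at every stage is a function of the sequence of answers received so far (this is exactly the content of clauses (1)--(3) of Definition \ref{def.ind} together with clause (2)'s dependence on $F(f_\gamma)$), the next query is determined by the earlier query/answer pairs.

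The main verification — and the step I expect to be the real obstacle — is checking the branching condition in Definition \ref{def.proc}(b): that any two strings in $\Omega_{\ci}$ are either equal or split at a stage where the queries agree but the (genuine, non-$\ast$) answers differ. I would argue this by transfinite induction on the stage $\beta$: if two strings $t_F$ and $t_G$ agree on all pairs $(f_\gamma,a_\gamma)$ for $\gamma<\beta$, then the entire computed data $\{f_\gamma\}_{\gamma\le\beta}$ coincides, since $f_\beta$ is determined by the common earlier answers (at limit stages by the common supremum, at successors by the common $\max$); hence the next query $f_\beta$ is literally the same function for both. Therefore the first place two distinct strings differ is necessarily at an \emph{answer} $a_\beta\neq a'_\beta$ following a common query, which is precisely the required splitting condition. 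Since $\ci$ uses no LOG entries (there are no nested sub-games here, only direct queries to the single oracle $F$), the side-conditions $a_\beta\neq\ast$, $a'_\beta\neq\ast$ hold trivially.

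Finally I would confirm that $\Omega_{\ci}$ \emph{defines} $\ci$ in the sense of Definition \ref{def.proc}(d)--(f): for every total $F$ the unique string matching $F$ is $t_F$, it terminates at stage $\alpha_F$ by Definition \ref{def.ind}(b), and its value is $\ci(F)$, so the procedure is total and computes $\ci$. The determinacy of the calculation for each $F$ follows from the same transfinite-induction argument, giving uniqueness as required. I would remark that the only subtlety beyond bookkeeping is ensuring the queries are honestly determined by past answers \emph{before} the current query is answered; this is immediate from the recursion in Definition \ref{def.ind}, since $f_\beta$ depends only on $\{F(f_\gamma):\gamma<\beta\}$ and never on $F(f_\beta)$ itself.
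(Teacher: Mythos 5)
Your overall strategy is the same as the paper's: run the transfinite iteration of Definition \ref{def.ind} as a deterministic transfinite sequence of queries, observe that each query is determined by the earlier answers, and conclude that two distinct strings must first differ at an answer, which is exactly the splitting condition of Definition \ref{def.proc} b). Your transfinite-induction verification of that condition is correct, and is in fact more explicit than the paper's proof, which dismisses this point with ``this is clearly a hyper-sequential procedure''.

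There is, however, a formal gap in how you set up the strings. By Definition \ref{def.proc} a), the answers $a_\beta$ in a string must lie in $\N \cup \{\ast\}$, and matching (Definition \ref{def.proc} c)) is only defined against functionals $F \in Tp(2)$, i.e.\ integer-valued ones. Your string $t_F=\{(f_\beta,a_\beta)\}_{\beta<\alpha_F+1}$ has $a_\beta = F(f_\beta) \in \Ca$, a \emph{function-valued} answer, so it is not a string in the paper's sense, and ``matches $F$'' is not even defined for your oracle $F:\Ca\rightarrow\Ca$. You invoke Remark \ref{Remark.2.1} and the mixed-type discussion of Section \ref{3.3}, but you apply the componentwise coding only to the \emph{output} of $\ci$ (the bit $f_{\alpha_F}(n)$), whereas the real issue is the oracle's \emph{answers}. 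The paper's proof fixes precisely this point by currying the oracle: set $F'(a \what f) = F(f)(a)$ and replace your single query $F(f_\beta)=?$ by the $\omega$-block of queries $F'(a \what f_\beta)=?$ for $a \in \N$, each with an integer answer; one step of the induction then occupies an $\omega$-segment, the whole calculation has length $\omega(\alpha_F+1)$, and it terminates (with value $f_\beta(n)$ for the component-$n$ procedure) after the first complete block revealing $F(f_\beta)\leq f_\beta$. With this change your argument goes through verbatim: two distinct calculations first disagree at some integer answer $F(f_\beta)(a)\neq G(f_\beta)(a)$ inside a block, and the next query is still determined by all earlier answers. The repair is mechanical, but it is not cosmetic either: this $\omega$-block structure is exactly what the $\Pi^1_1$ d-procedure of Lemma \ref{Lemma4.2} and the blocking/delay analysis of Section \ref{5.} are later built on, so a proof that bypasses it proves a statement about a modified notion of procedure rather than the one defined in the paper.
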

\begin{proof}
 Let $F:2^\N \rightarrow 2^\N$, $F'(a \what f) = F(f)(a)$ and let $f_0^F$ be the constant 0. We find $f^F_1 = F(f^F_0)$ through the $\omega$-series of queries $F'(a \what f_0^F) = ?$, then $f^F_2 = F(f^F_1) \cup f^F_1$ (identifying a characteristic function with the corresponding set) through the $\omega$-sequence of queries $F'(a \what f_1^F) = ?$ and so on. This is clearly a hyper-sequential procedure. \end{proof} 
 \medskip
 
 In his CiE-2019-paper \cite{Welch}, Philip Welch introduced infinite time Turing machines that can take functionals $F$ of type 2 as oracles. The idea is to have a special oracle tape, and whenever the oracle $F$ is called upon, we consider the oracle tape as the input information, and what the consequence of the oracle call will be will depend on the precise ITTM-model we are using. We have
\begin{theorem} Every ITTM-computable functional is sequential. \end{theorem}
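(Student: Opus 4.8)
The plan is to read a hyper-sequential procedure $\Omega$ off the program of the given machine, so that the functional it computes, call it $\Phi$, is exactly the functional defined by $\Omega$. Since, by the convention fixed after Definition \ref{def.proc}, \emph{sequential} and \emph{procedure} abbreviate hyper-sequential and hyper-sequential procedure, exhibiting such an $\Omega$ is precisely what is needed to conclude that $\Phi$ is sequential.

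Recall the model \cite{Welch}: a run is a deterministic transfinite sequence of configurations, governed by the ordinary transition rules at successors and by the lim sup rule (with the head returned to the left and the machine placed in a distinguished limit state) at limits; the type-$2$ oracle $F$ is consulted by writing an $f \in \N^\N$ on a dedicated oracle tape, entering the query state, and receiving the single integer $F(f)$. Fix $F$. The stages at which a query is posed form a well-ordered subset of the ordinals, and I would enumerate the queries in order as $f_0 , f_1 , \ldots , f_\beta , \ldots$, recording $a_\beta = F(f_\beta) \in \N$; this yields the string $t_F = \{(f_\beta , a_\beta)\}_{\beta < \alpha_F}$ of Definition \ref{def.proc} a). Because $\Phi$ is total the run halts for every $F$, and by the basic theory of these machines a halting run halts at a countable stage, so $\alpha_F$ is a countable ordinal and $t_F$ is a legitimate string. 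I assign $t_F$ the value $\Phi(F)$ read off the output tape at the halting stage and set $\Omega = \{(t_F , \Phi(F)) : F \in Tp(2)\}$. No LOG is needed: there is a single type-$2$ oracle and no nesting, so every $a_\beta$ lies in $\N$ rather than requiring $\ast$.

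The heart of the argument is that the \emph{next query} is a function of the recorded query/answer history alone. A run is determined by its initial configuration together with the sequence of oracle answers received, so the configuration at the $\beta$-th query-stage --- in particular the oracle-tape contents, which constitute $f_\beta$ --- is fixed once $\{(f_\gamma , a_\gamma)\}_{\gamma < \beta}$ is given. Granting this, the branching condition of Definition \ref{def.proc} b) is immediate: if $F \neq G$ give $t_F \neq t_G$, let $\beta$ be least with divergence; for $\gamma < \beta$ the two runs are literally identical, so $f_\gamma = f'_\gamma$ and $a_\gamma = a'_\gamma$, while at stage $\beta$ the common history forces $f_\beta = f'_\beta$ and divergence forces $a_\beta = F(f_\beta) \neq G(f_\beta) = a'_\beta$, with both answers in $\N$. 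Hence $\Omega$ is a procedure, each $t_F$ is a calculation matching $F$ with value $\Phi(F)$, and by Definition \ref{def.proc} e)--f) the total functional $\Phi$ is sequential. The one delicate point --- the main obstacle --- is the well-definedness of the next-query map at \emph{limit} positions $\beta$ of the query sequence, where $f_\beta$ emerges from the lim sup configuration produced by the limit rule over all intervening internal stages; one must check that this limit configuration depends only on the recorded history and not on $F$, which it does, since the whole block of internal stages between recorded queries is itself determined by that history. Once a concrete presentation of the machine is fixed, this is bookkeeping.
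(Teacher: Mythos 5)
Your proposal is correct and is precisely the argument the paper has in mind: the paper omits the proof as ``easy, but requires familiarity with the ITTM-model,'' and your observation that a deterministic transfinite run (including its limit-rule stages) is fully determined by the recorded query/answer history $\{(f_\gamma , a_\gamma)\}_{\gamma < \beta}$ is exactly that easy proof, yielding both the well-defined next-query map and the branching condition of Definition \ref{def.proc}~b). One small point to make explicit: your least-divergence case analysis should also rule out the possibility that one string is a \emph{proper initial segment} of another, but this follows from the same determinism --- after a common complete history the two runs coincide through the next block of internal stages, so either both halt (with the same value, which also shows $\Omega$ is well defined when $t_F = t_G$ for $F \neq G$) or both pose the same next query, so no proper-prefix pair can occur in $\Omega$.
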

We leave this theorem without a proof, since the proof is easy, but requires familiarity with the ITTM-model.

\medskip

Clearly, all sequential functionals are countably based. To what extent the converse is true is unknown, but we do have:

\begin{theorem}\label{3.9}
If the continuum hypothesis CH holds, all countably based total functionals will have extensions to partial functionals that are are sequential.
\end{theorem}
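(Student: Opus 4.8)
The plan is to build, under CH, a single hyper-sequential procedure $\Omega$ that contains, for every total $F$ of type $2$, exactly one matching calculation, and whose value on that calculation is $\Phi(F)$. The partial functional defined by $\Omega$ will then be the desired sequential extension of $\Phi$. The first ingredient is to enumerate the base elements of $\Phi$. Recall that a base element is a pair $(A,h)$ with $A\subseteq\N^\N$ countable and $h\colon A\to\N$, such that every total $G$ of type $2$ agreeing with $h$ on $A$ yields the same value $\Phi(G)=n(A,h)$. Under CH we have $|\N^\N|=\aleph_1$, so the number of countable subsets of $\N^\N$ is $\aleph_1^{\aleph_0}=\aleph_1$, and over each such $A$ there are at most $\aleph_0^{\aleph_0}=\aleph_1$ maps $h$; hence there are at most $\aleph_1$ base elements, and I would fix an enumeration $\{(A_\xi,h_\xi,n_\xi):\xi<\omega_1\}$ of them together with enumerations $A_\xi=\{f^\xi_0,f^\xi_1,\dots\}$. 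The role of countable-basedness is exactly that every total $F$ has a base element $(A,F|_A)$ with value $\Phi(F)$, so every total $F$ is consistent with $(A_\xi,h_\xi)$ for some $\xi<\omega_1$.

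Next I would describe the deterministic strategy underlying $\Omega$. It processes the base elements in the order of the enumeration: while at $\xi$ it queries $f^\xi_0,f^\xi_1,\dots$ in turn; upon the first answer disagreeing with $h_\xi$ it abandons $\xi$ and moves to $\xi+1$, while if every query on $A_\xi$ agrees with $h_\xi$ it halts and outputs $n_\xi$. I take $\Omega$ to be the set of (string, value) pairs arising as the runs of this strategy against the various inputs. Correctness is immediate from the base property: if the run against a total $F$ halts at index $\xi_F$, then $F$ agrees with $h_{\xi_F}$ on all of $A_{\xi_F}$, whence $\Phi(F)=n_{\xi_F}$, the value output.

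The main obstacle, and the point at which CH is indispensable, is the bookkeeping showing that every run is genuinely a string, i.e.\ has countable length. Given a total $F$, let $\xi_F<\omega_1$ be the least index with which $F$ is consistent, which exists by the first paragraph. For each $\eta<\xi_F$ the input $F$ disagrees with $h_\eta$ somewhere on $A_\eta$, hence at a finite position of the enumeration of $A_\eta$, so the block of queries devoted to $\eta$ is finite; summing over the countably many $\eta<\xi_F$ gives a countable ordinal, after which the block for $\xi_F$ contributes at most $\omega$ further queries. Thus the run against $F$ has countable length, as required by Definition \ref{def.proc}. Here it is essential that the enumeration has length exactly $\omega_1$, so that each $\xi_F$ is a countable ordinal; without CH the first base element of $F$ could sit at an index cofinal in an uncountable length, and the resulting run would fail to be a string.

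Finally I would check that $\Omega$ satisfies the branching clause of Definition \ref{def.proc}(b). Since the next query issued is a function of the sequence of answers received so far — across blocks, within a block, and at limit stages, because from the answer-sequence one reconstructs the current index $\xi$ and the position in $A_\xi$ — any two distinct runs first diverge at a stage $\beta$ where the query parts $f_\beta$ coincide while the answers $a_\beta$ differ, and both answers lie in $\N$ (no LOG entries occur in this flat construction), so the clause holds. The same determinacy of the halting condition shows that no run is a proper initial segment of another. Hence $\Omega$ defines a partial functional $\Psi$, and since $\Psi(F)=\Phi(F)$ for every total $F$, $\Psi$ is the sought sequential extension of $\Phi$.
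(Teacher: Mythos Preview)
Your proof is correct, and the idea is the natural one: use CH to well-order a countable-basis witness set in type $\omega_1$ and then search through it deterministically. The paper, however, organises the search differently. Instead of enumerating the \emph{base elements} of $\Phi$, it fixes an enumeration $\{f_\alpha\}_{\alpha<\aleph_1}$ of $\N^\N$ itself and simply queries $F(f_0),F(f_1),\ldots$ in that order, halting at the first $\alpha_0$ for which the answers already recorded contain some base element $(A,\phi,a)$ (i.e.\ $A\subseteq\{f_\alpha:\alpha<\alpha_0\}$ and $F\!\restriction\!A=\phi$); the output is then $a$, which is well-defined since any such $a$ equals $\Phi(F)$. Termination is immediate because the domain $A$ of some base element for $F$ is countable and hence sits inside an initial segment of the enumeration. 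Compared with your argument, this avoids the cardinality computation for the set of base elements and the block-by-block bookkeeping: the calculation is just a single linear pass through $\N^\N$, with no need to argue that each rejected block is finite. Your version, on the other hand, makes the structure of the search more explicit (finite blocks followed by one $\omega$-block) and gives a tighter handle on the length of a given calculation. Either route proves the theorem.
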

\begin{proof} We work within ZFC + CH. 
Let $\{f_\alpha\}_{\alpha < \aleph_1}$ be an enumeration of $\N^\N$. Let $\Phi$ be countably based and let $X$ be  set of base elements for $\Phi$. The elements of $X$ will be triples $(A,\phi,a)$ where $A \subseteq \N^\N$ is countable,  $\phi:A \rightarrow \N$ and $a \in \N$. The significance is that whenever $F$ extends $\phi$ to all of $\N^\N$, then $\Phi(F) = a$, and that for each $F$ there will be at least one $(A,\phi,a) \in X$ where $F$ is an extension of $\phi$.
\smallskip

The sequential procedure will then be to compute $F(f_\alpha)$ up to the first $\alpha_0$ where there is some $(A,\phi,a) \in X$ such that
\begin{itemize}
\item $A \subseteq \{f_\alpha : \alpha < \alpha_0\}$.
\item For $f_\alpha \in A$ we have that $F(f_\alpha) = \phi(f_\alpha)$.
\end{itemize}
We will have that $\Phi(F) = a$ independent of which $(A,\phi,a)$ we chose with this property.
\end{proof}
\subsection{Denotation procedures}\label{4.}
There is no reason to believe that the continuum hypothesis can be avoided in Theorem \ref{3.9}, but the theorem still suggests that the concept of hyper-sequential functional is too general to be of interest, and the intention is to investigate possible refinements of the concept. Now we will consider procedures that will include some extra information, a number or \emph{denotation} $d_\beta$ for each $\beta$ in the index ordinal of a calculation. In its full generality, this does not restrict the class of functionals definable from procedures, but it gives us a tool for discussing the complexity of them. Thus, in the theorems of this section, the constructions of the  procedures  with denotations used to prove them will be as important as the theorems themselves.

\begin{definition}\label{Def.4.1}{\em A \emph{denotation procedure} $\Omega$, \emph{d-procedure} for short,  will be a set $\Omega$ of \emph{  calculations with denotations}  \[( \{( f_\beta,a_\beta,d_\beta )\}_{\beta < \alpha},c)\] where each $a_\beta \in \N \cup \{\ast\}$ and \begin{enumerate}
\item The \emph{denotations} $d_\beta$ are in $\N$.
\item The corresponding set of calculations without the denotations is a procedure.
\item For each $( \{( f_\beta,a_\beta,d_\beta )\}_{\beta < \alpha},c) \in \Omega$, if $\beta < \gamma < \alpha$, then $d_\beta \neq d_\gamma$.

\end{enumerate}
 }\end{definition}
 By abuse of terminology, we will use $\Omega$ both for a d-procedure and for the corresponding procedure, making it clear in each case if we consider the denotations or not.
Clearly, all d-procedures will define sequential, partial functionals as well. In fact we have
 \begin{observation} By the axiom of choice, all procedures $\Omega$ can be extended to  d-procedures.\end{observation}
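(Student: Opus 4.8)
The plan is to produce, for each string occurring in $\Omega$, a natural-number labelling of its positions that is injective \emph{along that string}, and then to collect the resulting labelled strings into a single set. The key observation is that Definition \ref{Def.4.1} imposes no compatibility requirement \emph{between} distinct calculations: conditions (1) and (2) are pointwise or refer to the whole underlying set, while condition (3) is internal to a single calculation. Hence it suffices to treat each string of $\Omega$ in isolation, and the only genuine ingredient is that a countable ordinal injects into $\N$.

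Concretely, I would argue as follows. Each element of $\Omega$ is a pair $(t,c)$ with $t = \{(f_\beta,a_\beta)\}_{\beta<\alpha}$ a string of some countable length $\alpha < \omega_1$. Since $\alpha$ is countable, the set $I_t$ of injections $\alpha \to \N$ is nonempty. The family $\{I_t\}_{(t,c)\in\Omega}$ is then a set of nonempty sets, so by the axiom of choice there is a choice function selecting some $j_t \in I_t$ for every $(t,c)\in\Omega$ simultaneously. This is the sole appeal to choice, and it is exactly what the statement advertises: the difficulty is not in any single string but in picking injections uniformly across the (possibly uncountably many) strings of $\Omega$. One may equivalently first fix, for each countable ordinal $\alpha$, a single injection $\alpha \to \N$ (still a use of choice, now over the countable ordinals) and then read off $j_t$ from the length of $t$.

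It then remains to set
\[\Omega^{\ast} = \{(\{(f_\beta,a_\beta,j_t(\beta))\}_{\beta<\alpha},c) : (\{(f_\beta,a_\beta)\}_{\beta<\alpha},c)\in\Omega\},\]
and to check the three clauses of Definition \ref{Def.4.1}. Clause (1) holds because each $j_t(\beta)\in\N$; clause (2) holds because deleting the third coordinate from every triple returns precisely $\Omega$, which is a procedure by hypothesis; and clause (3) holds because $j_t$ is injective, so $\beta<\gamma<\alpha$ forces $d_\beta = j_t(\beta) \neq j_t(\gamma) = d_\gamma$. Each verification is immediate, so there is no real obstacle beyond the choice principle itself.

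I should flag one point of honesty about scope. Definition \ref{Def.4.1} does \emph{not} demand that the denotations agree on a common initial segment of two distinct calculations, so the per-string construction above is complete. If one additionally wanted such prefix-consistency (natural for the intended bookkeeping, though not part of the definition), the per-string choices no longer suffice, and the construction would require more care: one cannot simply avoid the denotations of earlier positions along a branch, since those may already exhaust $\N$ before a limit height is reached. The remedy is to assign denotations by transfinite recursion on height while maintaining the invariant that, below every limit position, infinitely many naturals remain reserved, so that arbitrarily long countable extensions of a branch can still be labelled injectively; I would treat this refinement separately and do not claim it is routine.
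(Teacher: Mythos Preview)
Your argument is correct and is essentially identical to the paper's: use the axiom of choice to select, for each calculation in $\Omega$, an enumeration (injection into $\N$) of its countable index ordinal, and read off the denotations from that enumeration. Your additional discussion of prefix-consistency is a nice observation but, as you note, is not required by Definition~\ref{Def.4.1} and goes beyond what is needed here.
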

 We simply use the axiom of choice to select one enumeration of $\alpha$ for each calculation $(\{(f_\beta, a_\beta)\}_{\beta < \alpha} , c)  \in \Omega$ and use this to define the additional $d_\beta$s for each calculation. There is of course no extra knowledge to be harvested from this argument, but it illustrates a possibility that we have to bring under control in the d-procedures that we construct:
 \begin{definition}\label{def.delay}{\em 
 
 Let $\Omega$ be a d-procedure, let $\{(f_\beta , a_\beta , d_\beta)\}_{\beta < \alpha}$ be a calculation in $\Omega$ and let $\beta < \alpha$. The \emph{delay} of the denotation of the calculation at point $\beta$ is the least ordinal $\gamma$ such that for all other calculations $\{(f'_\delta , a'_\delta , d'_\delta)\}_{\delta < \alpha'}$ in $\Omega$, if $(f_\delta , a_\delta) = (f'_\delta , a'_\delta)$ for all $\delta < \beta + \gamma$, then $d_\delta = d'_\delta$ for all $\delta \leq \beta$. }\end{definition}
 The delay tells us for how much longer we must run a calculation before we can tell  what the denotation will be.

 \bigskip

\noindent A key property of a d-procedure is that we can use the denotations  to code the procedure  in a manageable way as a subset of the continuum. 
\begin{definition}{\em \hspace*{2mm}
\begin{itemize}
\item[a)] Let $\Omega$ be a d-procedure. The \emph{representation} of $\Omega$ will be the set of quadruples $(D,\prec , \{(f_d,a_d)\}_{d \in D},c)$ derived from calculations $(\{(f_\beta , a_\beta , d_\beta)\}_{\beta < \alpha},c)$ in $\Omega$ as follows:
\begin{itemize}
\item[i)] $D$ is the set of $d_\beta$ for $\beta < \alpha$ and $\prec$ is the corresponding ordering on $D$.
\item[ii)] When $d = d_\beta$, $f_d$ is the $f_\beta$ and $a_d$ is the $a_\beta$ of the calculation.
\item[iii)] $c$ is the value of the calculation.
\end{itemize}
We code these items as elements of $\N^\N$ in some standard way.
\item[b)] We say that a d-procedure $\Omega$ is $\Pi^1_1$ if the representation of $\Omega$ is a $\Pi^1_1$-set.
\item[c)] If $(D,\prec , \{(f_d,a_d)\}_{d \in D},c)$ is a calculation in a d-procedure and $(D',\prec' , \{(g_d,b_d)\}_{d \in D'})$ satisfies that $\prec'$ is a well ordering of $D'$, each $g_d$ is of type 1 and each $b_d$  is of type 0, we say that $(D',\prec' , \{(g_d,b_d,\}_{d \in D'})$ is \emph{isomorphic to} an initial segment of $(D,\prec , \{(f_d,a_d)\}_{d \in D})$ if there is an order isomorphism $\rho$ from $D'$ to an initial segment of $D$ such that $f_{\rho(d)} = g_d$ and $a_{\rho(d)} = b_d$ for all $d \in D'$.

\end{itemize}}\end{definition}
\begin{lemma}\label{Lemma4.2} The functional $\ci$ is definable from a  d-procedure that is $\Pi^1_1$. \end{lemma}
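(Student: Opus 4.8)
The plan is to take the canonical hyper-sequential procedure for $\ci$ — the one exhibited when we proved that $\ci$ is hyper-sequential — equip it with denotations in all possible ways, and then check that the resulting representation is cut out by a $\Pi^1_1$ predicate. Recall that the natural procedure computes $\ci(F)$ by running through blocks indexed by the stages $\beta \leq \alpha_F$ of the induction: block $\beta$ consists of the $\omega$ queries $a \what f_\beta = ?$ (for $a \in \N$), whose answers are the bits $F(f_\beta)(a)$, and the whole calculation has order type $\omega \cdot (\alpha_F + 1)$, the final block being the one at which closure $F(f_{\alpha_F}) \leq f_{\alpha_F}$ is detected. Since no higher-type oracle is nested inside this procedure, no entries of the LOG occur, so every answer lies in $\N$.

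To turn this into a d-procedure I would let $\Omega$ consist of all calculations-with-denotations obtained from such a calculation together with an \emph{arbitrary} injection $\beta \mapsto d_\beta$ of its index ordinal into $\N$; conditions (1)--(3) of Definition \ref{Def.4.1} are then immediate, the middle one being exactly the statement that $\ci$ is hyper-sequential. Allowing every injective relabelling is deliberate: it makes the representation of $\Omega$ coincide with the set of all quadruples $(D,\prec,\{(f_d,a_d)\}_{d\in D},c)$, with $D \subseteq \N$ and $\prec$ a well-ordering of $D$, that describe a correct run of the induction. The task then reduces to verifying that this latter set is $\Pi^1_1$.

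So, given a quadruple, the verification predicate has to assert: $\prec$ well-orders $D$; the order type decomposes into successive $\omega$-blocks (two points lie in the same block iff the $\prec$-interval between them is finite, and each block is infinite, both arithmetical since a subset of $\N$ is finite iff bounded); in the first block the queries are $a \what \mathbf 0$; in the block of any point the recorded query $f_d$ reconstructs the current approximation, i.e.\ $f_d(0)$ is the within-block position and $f_d(n+1) = 1$ iff some point in an \emph{earlier} block, at position $n$, carries the answer $1$; the last block is a closure block (every answer $1$ is already present) while no earlier block is; and the value $c$ equals $f_{\alpha_F}(b)$ for the recorded type-$0$ argument $b$, i.e.\ $c = 1$ iff some point at position $b$ carries the answer $1$. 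The decisive observation is that, apart from ``$\prec$ is a well-ordering,'' every clause is \emph{arithmetical} in the code of the quadruple: the approximation $f_\beta$ never has to be recomputed by a set quantifier but is read off from the logged answers through a $\prec$-bounded number quantifier over $D \subseteq \N$. Hence the verification predicate is a conjunction of a $\Pi^1_1$ condition with arithmetical ones, so it is $\Pi^1_1$, and this set is exactly the representation of $\Omega$.

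The main work, and the only genuinely delicate point, is this last reduction of the induction's dynamics to arithmetic: one must resist re-running the transfinite recursion, which would reintroduce a function quantifier and overshoot $\Pi^1_1$, and instead recover every intermediate set purely from the answers already present in the string. Two bookkeeping matters are routine and I would only flag them: handling the type-$0$ argument $b$, recorded as a parameter of the calculation so that distinct values of $b$ do not collide on a common string, and the coding of the strings and well-orderings as elements of $\N^\N$; neither affects the complexity count. Note finally that the lemma asks only for $\Pi^1_1$-ness and not for any bound on the delay of Definition \ref{def.delay}, so the crude device of admitting all relabellings is harmless here.
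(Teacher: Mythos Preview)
Your argument is correct: once well-foundedness of $\prec$ is assumed, the block decomposition and all the local correctness conditions are arithmetical in the coded string, precisely because each intermediate $f_\beta$ is read off directly from the logged answers rather than recomputed by a transfinite recursion. (One minor slip: the recorded answers are values of $G$, not of $F_G$, so ``carries the answer $1$'' should read ``carries a positive answer''.) The paper takes a different route: instead of admitting every injective relabelling, it fixes a single canonical denotation, giving the query at position $a$ in a non-final block $\beta$ the label $\langle x+1, a\rangle$, where $x$ is the least integer newly added at stage $\beta+1$, and labelling the final block with first coordinate $0$. This still yields a $\Pi^1_1$ representation, now with delay exactly $\omega$ rather than unbounded. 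Your construction is cleaner for the lemma as stated; the paper's specific denotation, however, is what later makes the procedure \emph{tame} and supports the blocking machinery, where the functions {\sc denote} and {\sc redenote} must be computable in $^2E$ from initial segments of the calculation --- something the all-relabellings d-procedure cannot supply. You correctly anticipated this limitation in your final remark.
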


\begin{proof}

For each $b$, we will construct a procedure for the 0 - 1-valued function \[\lambda G. \ci(F_G)(b),\] where $F_G(f)(a) = \min\{1,G(a\what f)\}$ is as in Remark \ref{Remark.2.1}. The only difference between these procedures will be in the value part, the $c$ in each string. 
\smallskip

Let $G$ be given. We will describe the  calculation with denotation that will match $G$ and conclude with the value $\ci(F_G)(b)$.
Let  $\{f_\beta\}_{\beta \leq \alpha}$ be the sequence constructed while defining $\ci(F_G)$.

For each $\beta \leq \alpha$, let $g_{a,\beta} = a \what f_\beta$. We see that in order to ``compute" $\ci(F_G)(b)$ we have to evaluate $G$ on all functions $g_{a,\beta}$ for all $\beta \leq \alpha$, a sequence of queries of order type $\omega(\alpha + 1)$. So, we define the  calculation matching $G$ as
\[(\{( h_\gamma , b_\gamma , d_\gamma)\}_{\gamma < \omega(\alpha + 1)} , c) \] where
\begin{itemize}
\item[-] $h_{\omega \cdot \beta + a} = g_{a,\beta}$ for $\beta \leq \alpha$ and $a < \omega$.
\item[-] $b_{\omega \cdot \beta + a} = G(g_{a,\beta})$ for $\beta$ and $a$ as above.
\item[-] $d_{\omega \cdot \alpha + a} = \langle 0,a \rangle$ for $a \in \omega$.
\item[-] $d_{\omega \cdot \beta + a} = \langle x+1 , a \rangle$, where $x$ is minimal such that $f_{\beta+ 1}(x) = 1$ while $f_\beta(x) = 0$, if $\beta < \alpha$ and $a \in \omega$.
\item[-] $c = f_\alpha(b)$.
\end{itemize}
It remains to prove that the representation is $\Pi^1_1$. We do this through the following steps:
\begin{enumerate}
\item Since the set of pairs $(D,\prec)$ where $D \subseteq \N$ and $\prec$ is a well ordering of $D$ is $\Pi^1_1$, the set $\Omega_1$ of quadruples $(D,\prec,\{(h_d,b_d)\}_{d \in D},c)$ where $(D,\prec)$ is a well ordering as above is $\Pi^1_1$. We call the elements in $\Omega_1$ \emph{strings}.
\item If $\Omega_2$ is is the set of strings in $\Omega_1$ where the order type of  $(D,\prec)$ equals  $\omega \cdot(\alpha + 1)$ for some $\alpha$, we still have a $\Pi^1_1$-set.
\item Let $\Omega_3$ be the strings in $\Omega_2$ that corresponds to a possible evaluation of $\ci$ on some $G$. This requires that the calculation is \emph{locally correct}, i.e.  that each $(h_d,a_d)$ is in relation to its $(D,\prec)$-predecessors as prescribed by the recursion step. This can be decided arithmetically, so $\Omega_3$ is also $\Pi^1_1$.
\item For a string in $\Omega_3$, we can arithmetically decide if the enumeration $(D,\prec)$ is as in the construction above, so the representation $\Omega_4$ of the calculations with denotations in the procedure for $\ci$ will also be $\Pi^1_1$. \end{enumerate}
\end{proof}
\begin{remark}{\em We introduce delays in this construction. Whenever we simulate one step in the induction, we must wait until we know if we are at the final step or not before we can decide what the denotation will be, and this involves a delay of length $\omega$.}\end{remark}

\begin{definition}{\em Let $\Phi$ be a total functional of type 3. We say that $\Phi$ is \emph{$\Pi^1_1$-definable} if $\Phi $ is definable from a $\Pi^1_1$ d-procedure.}\end{definition}
\begin{lemma}\label{Lemma.4.4}
The class of $\Pi^1_1$-definable total functionals of type 3 is closed under relative Kleene computability.
\end{lemma}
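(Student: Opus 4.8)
The plan is to build on the procedure already produced in the proof of Lemma \ref{lemma.hyp} and to upgrade it to a $\Pi^1_1$ d-procedure by equipping each of its calculations with a carefully chosen denotation scheme. Fix $\vec\Phi = (\Phi_1,\ldots,\Phi_n)$, each $\Phi_i$ definable from a $\Pi^1_1$ d-procedure $\Omega_i$, and fix $e$, $\vec f$, $\vec a$ with $\lambda F.\{e\}(\vec\Phi,F,\vec f,\vec a)$ total. Lemma \ref{lemma.hyp} gives the procedure $\Omega_e = \{(t_{e,\vec\Phi,F,\vec f,\vec a},b) : \{e\}(\vec\Phi,F,\vec f,\vec a) = b\}$, so all that remains is (i) to assign denotations making $\Omega_e$ a genuine d-procedure, and (ii) to show that the resulting representation is $\Pi^1_1$. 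As in Lemma \ref{Lemma4.2}, the denotations will do all the work, since they let us code a transfinite calculation as a well-ordering carried by a subset of $\N$.

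I would define the denotations by recursion following Definition \ref{Def.3.3}, in parallel with the recursive construction of $t_{e,\vec\Phi,F,\vec f,\vec a}$, using pairing to record position in the computation tree. At an $F$-query block (the $\omega+1$ steps ending in $(f,F(f))$) and at composition blocks the assignment is routine: tag each step with a code for its scheme together with the denotation inherited from the relevant sub-calculation. The essential case is an application of some $\Phi_i$, where the block is the concatenation over $\beta < \alpha$ of a LOG step $(g_\beta,\ast)$ followed by the sub-calculation $t_{e_1,\ldots,g_\beta,\ldots}$, and where $\{(g_\beta,b_\beta,d^{(i)}_\beta)\}_{\beta<\alpha}$ is the $\Omega_i$-calculation matching the functional $H$ fed to $\Phi_i$. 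Here I would set the denotation of the LOG step in block $\beta$ to $\langle d^{(i)}_\beta, 0\rangle$ and the denotation of the $\gamma$-th step of the sub-calculation in block $\beta$ to $\langle d^{(i)}_\beta, d^{\mathrm{sub}}_\gamma + 1\rangle$, where $d^{\mathrm{sub}}_\gamma$ is the recursively assigned denotation of the sub-computation. Distinctness along any calculation (condition (3) of Definition \ref{Def.4.1}) follows because the $d^{(i)}_\beta$ are distinct along the $\Omega_i$-calculation and the $d^{\mathrm{sub}}_\gamma$ are distinct along each fixed sub-calculation, while condition (2) is exactly Lemma \ref{lemma.hyp}. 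The delays (Definition \ref{def.delay}) introduced by the $\Phi_i$-calls are simply inherited from those of $\Omega_i$, so they cause no trouble. Since this recursion follows the well-founded computation, it is well-defined, and I would make it official using the recursion theorem to absorb the self-reference arising from S9.

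To see that the representation is $\Pi^1_1$ I would follow the four-step template of Lemma \ref{Lemma4.2}. A candidate quadruple $(D,\prec,\{(f_d,a_d)\}_{d\in D},c)$ lies in the representation if and only if: $(D,\prec)$ is a well-ordering, a $\Pi^1_1$ condition supplying the well-founded backbone; the denotation of each $d\in D$ decodes arithmetically to a scheme-tag consistent with its $\prec$-predecessors, a per-position ``local correctness'' check that remains $\Pi^1_1$ under the universal quantifier over $d\in D$; and, for each position whose tag marks it as the head of a $\Phi_i$-call, the sub-family of $D$ carrying first coordinate $d^{(i)}_\beta$ assembles into a quadruple lying in the representation of $\Omega_i$. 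The last clause is where the hypothesis enters: membership in the representation of $\Omega_i$ is $\Pi^1_1$, and asserting it for all such call-heads is a universal number quantification over a $\Pi^1_1$ matrix, hence again $\Pi^1_1$. The conjunction of these finitely many $\Pi^1_1$ conditions is $\Pi^1_1$, and the totality hypothesis ensures the functional so defined is total, giving the lemma.

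The main obstacle is the interaction between the transfinite rank of the computation tree — unbounded, since S9 and the recursion theorem permit self-reference — and the demand that the verification be a single $\Pi^1_1$ formula that never mentions that rank. The denotation scheme must therefore be designed so that a position's role in the computation (which scheme, and which $\Phi_i$-block it belongs to) is recoverable arithmetically from its denotation together with the ordering $\prec$, and so that the membership tests for the transfinitely many $\Phi_i$-sub-calls collapse into one bounded quantifier over a $\Pi^1_1$ predicate. Getting the pairing-and-tagging bookkeeping exactly right, so that extracting the sub-calculation attached to a given call-head is arithmetic and distinctness is preserved through arbitrary nesting, is the delicate part; everything else is a matter of transcribing the construction of Lemma \ref{lemma.hyp} into the denotation framework.
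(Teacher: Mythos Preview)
Your denotation scheme is essentially the paper's: the pairings $\langle d^{(i)}_\beta, 0\rangle$ and $\langle d^{(i)}_\beta, d_\gamma^{\mathrm{sub}}+1\rangle$ at a $\Phi_i$-call match the paper exactly, and the other schemes are routine. The gap is in the $\Pi^1_1$ verification. You say that at each $\Phi_i$-call-head ``the sub-family of $D$ carrying first coordinate $d^{(i)}_\beta$ assembles into a quadruple lying in the representation of $\Omega_i$'' and treat this as a $\Pi^1_1$ matrix. But the quadruple you must feed to $\Omega_i$ is $(D',\prec',\{(g_{d'},b_{d'})\}_{d'\in D'},c)$, and the answers $b_{d'} = H(g_{d'}) = \{e_1\}(\vec\Phi,F,g_{d'},\vec f,\vec a)$ are \emph{not recorded anywhere in the d-string}: they are the values of the sub-calculations. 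Recovering them is itself a recursion down the computation tree --- if the sub-computation for $H(g_{d'})$ is headed by another $\Phi_j$-call, its value is fixed only by an $\Omega_j$-quadruple whose own answer-column depends on yet deeper sub-computations. The four-step template worked in Lemma~\ref{Lemma4.2} only because the procedure for $\ci$ has no nested calls, so each step is related to its predecessors arithmetically; that is precisely what fails here, and calling it ``bookkeeping'' does not close the gap.

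The paper takes a different route for this half: rather than writing a $\Pi^1_1$ formula directly, it uses the recursion theorem for $^2E$-computability to design an algorithm relative to $^2E$ that, given $e$, $\vec f$, $\vec a$ and a candidate d-string, semi-checks whether the string represents a genuine calculation and simultaneously \emph{produces the value}. At the $\Phi_i$-case the algorithm first recursively verifies each sub-block --- thereby obtaining each $b_{d'}$ --- and then uses Gandy selection to search the $\Pi^1_1$ set $\Omega_i$ for the assembled quadruple and read off the value $c$. Since the halting set of a partial $^2E$-computable function is $\Pi^1_1$, this yields the bound without ever flattening the nested value-extraction into a single formula. The recursion theorem is the missing idea, and it is needed for the \emph{verifier}, not for the denotation assignment (that is plain transfinite recursion on computation rank; S9 introduces no self-reference there).
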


\begin{proof}
We build on the proof of Theorem \ref{3.4} and the construction in Definition \ref{Def.3.3}. We just have to show how to add the denotations $d_\beta$ to each item in the calculation, and then show that the complexity of the representation is preserved. We define the d-procedure as follows:
\smallskip

 In the cases of initial computations there are no ordinals to be denoted, and in the case of S9 we can keep the denotations as they are.
 \smallskip
 
 In the case of composition, we can use $d \mapsto \langle 0,d\rangle$ to denote the items in the first part and the map $d \mapsto \langle 1,d \rangle$ to denote the items in the second part.
 \smallskip
 
 When we compute $g$ and then apply $F$ to $g$, we use the map $d \mapsto \langle c+1,d\rangle$ to  denote the items coming from the calculation computing $g(c)$ and end the full subcalculation with $(g,F(g),\langle 0,0 \rangle)$.
 \smallskip
 
 In the case where we apply the procedure $\Omega_i$ for $\Phi_i$ to a partial functional $H$ of type 2 for which we have an index, our calculation will be the concatenation of the calculatioins related to the computations of $H(g_\beta) = a_\beta$, where we also inserted $(g_\beta ,\ast)$ in front of each such local calculation.  If the $\Omega_i$-denotation for the pair $(g_\beta , a_\beta)$ in the calculation evaluating $\Phi_i(H)$ is $d_1$, we use $\langle d_1,0\rangle$ to denote $(g_\beta , \ast)$ in the calculation we construct, and if an item in the calculation defined from the computation of $H(g_\beta) = a_\beta$ is $d_2$, we let $\langle d_1,d_2+1\rangle$ be the denotation of the corresponding item in the concatenated calculation.
\smallskip

It is clear that if two calculations, as in Definition \ref{Def.3.3} are equal, the denotations will be the same as well. This defines a d-procedure.

\medskip

 It remains to show that the representation of this d-procedure will be $\Pi^1_1$ when the representations of the d-procedures for $\Phi_1 , \ldots , \Phi_n$ are $\Pi^1_1$. This will be the hard, technical part of our proof, and we first give a brief explanation of what we aim to do:

\smallskip

We let the $\Pi^1_1$-representations for $\vec \Phi$ be given. Using the recursion theorem for computing relative to $^2E$, we will design an algorithm that, given $e$, $\vec f$, $\vec a$ and a representation \[(D,\prec , \{(f_d,a_d)\}_{d \in D} )\] of a string with denotations (a d-string for short) will semi-check, in the sense of providing an algorithm relative to $^2E$ that terminates when the property holds,    if there is some $F$ such that this string matches $F$ and that the d-string gives us  the representation of the  calculation  we constructed for the computation $\{e\}(\vec \Phi,F,\vec f , \vec a)$.  In addition, if our algorithm finds the  representation $(D,\prec , \{(f_d,a_d)\}_{d \in D} )$   adequate as the representation of a d-calculation, it will produce the value of the computation $\{e\}(\vec \Phi,F,\vec f , \vec a)$, which then will be the same for any $F$ matching the given d-string (which by now is confirmed as a d-calculation). Since termination of $^2E$-algorithms is of complexity  $\Pi^1_1$, this will prove the lemma. Without stressing this point everywhere needed, we assume that the given d-string matches itself, in the sense that if both $(f,a)$ and $(f,a')$ occur, maybe at different places, then $a=a'$.

\medskip

 As usual, our $^2E$-procedure will be defined by cases following S1 - S9, where we only focus on the nontrivial cases.
 \smallskip

If $e$ is an index for an initial computation, we check if the given string is empty. If so, it is fine as a calculation, and we can read off the value from the index, the given $\vec f$ and $\vec a$.
\smallskip

\noindent Composition:\[\{e\}(\vec \Phi , F , \vec f , \vec a) = \{e_1\}(\vec \Phi , F , \vec f , \{e_2\}(\vec \Phi , F , \vec f , \vec a),\vec a).\]
First we check if $(D,\prec)$ is of the form $(D_2,\prec_2) + (D_1,\prec_1)$ where each $d \in D_2$ is of the form $\langle 0,d'\rangle$ and each $d \in D_1$ is of the form $\langle 1,d'\rangle$.

Let $D_2' = \{d' : \langle 0 , d'\rangle \in D_2\}$ and consider the corresponding string inherited from the given one. If this is ok for the computation $\{e_2\}(\vec \Phi , F , \vec f , \vec a)$, we compute the value $c'$, and now ask if the $(D_1,\prec_1)$ is ok for $\{e_1\}(\vec \Phi , F , \vec f , c', \vec a)$ in the same sense.
\smallskip

 \noindent Application of $F$:
\[\{e\}(\vec \Phi , F , \vec f , \vec a) = F(\lambda c.\{e_1\}(\vec \Phi , F , \vec f , c , \vec a)).\]
First we check if the given string has a last element $(g,a,\langle 0,0\rangle)$ and if what comes before can be seen as an $\omega$-sum of intervals $I_c$ where the denotations are of the form $\langle c+1,d\rangle$.

If this is the case, the given string generates, in analogy with the case for composition, strings $t_c$, and we check for each of them if they are ok for the computation $\{e_1\}(\vec \Phi , F , \vec f , c , \vec a)$, and with value $g(c)$. 

If they are all ok we accept the given string  as a calculation, and see that the value of $F(\lambda c.\{e_1\}(\vec \Phi , F , \vec f , c , \vec a))$ must be $a$.
\smallskip

\noindent Application of $\Phi_i$: 
\[\{e\}(\vec \Phi , F , \vec f , \vec a) = \Phi_i(\lambda g.\{e_1\}(\vec \Phi , F , g , \vec f , \vec a)).\]
This is where we need the extra information stored in the LOG. We proceed as follows:

In the given string, first check if $(D,\prec)$ is the union of intervals where the first element of the interval is of the form $\langle d_1,0\rangle$ and the rest are of the form $\langle d_1,d_2 + 1\rangle$.

Then check for each of these intervals , where $g = f_{\langle d_1 , 0\rangle}$, if the rest of this interval, after replacing $\langle d_1,d_2 + 1\rangle$ with $d_2$, is ok for $\{e_1\}(\vec \Phi , F , g , \vec f , \vec a)$, and if so, compute the value $a$.

Finally, we collect these pairs $(g,a)$ with denotation $d_1$ into a string, and check if this is a calculation in  $\Omega_i$ with some value $c$. For this, we use Gandy selection, and we then find the correct value as well.
\medskip

In order to complete the argument we must prove that if this process works, then the computation in question, relative to any $F$ matching the given string, will terminate with the chosen value, and prove that if the computation terminates for a total $F$, then our process terminates on the corresponding  representation of the d-calculation, and again, that it gives the right value. Both arguments are by induction on the length of computations, the first for $^2E$-computations and the latter for the computation of $\{e\}(\vec \Phi,F,\vec f , \vec a)$. The details are trivial.
\end{proof}
\section{Inductive procedures}\label{5.}
As a consequence of Lemmas \ref{Lemma4.2} and \ref{Lemma.4.4} we see that all total functionals of type 3 computable in $\ci$ will be definable from a d-procedure that is $\Pi^1_1$, but the converse is not true, see Theorem \ref{Theorem.5.10}.
\smallskip

\noindent
The aim of this section is to narrow down a subclass of the d-procedures further in order to approach a characterisation of the class we are primarily interested in, the functionals computable in $\ci$.
\subsection{Computability in $^2E$}
Matters are trivial if the d-procedure is hyperarithmetical:
\begin{theorem}
Let $\Phi$ be of type 3. Then $\Phi$ is computable in $^2E$ if and only if $\Phi$ is definable from a d-procedure with a $\Delta^1_1$-representation.\end{theorem}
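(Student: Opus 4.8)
The plan is to prove both directions of the equivalence, with the reverse direction being the main work.

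\textbf{The easy direction} ($\Rightarrow$): Suppose $\Phi$ is computable in $^2E$. By Lemmas \ref{Lemma4.2} and \ref{Lemma.4.4}, the class of $\Pi^1_1$-definable total functionals is closed under Kleene computability, and $\ci$ itself is $\Pi^1_1$-definable; but here I want only $^2E$, not $\ci$, so I would instead directly produce a d-procedure whose representation is $\Delta^1_1$. The point is that a $^2E$-computation of $\Phi(F)$ makes only countably many oracle calls arranged in a well-ordered fashion, and the associated d-procedure can be built exactly as in the proof of Lemma \ref{Lemma.4.4}, restricted to the schemes S1--S9 where the only type-2 object applied is $F$ and the only auxiliary type-3 object is $^2E$. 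Running that construction and checking the complexity bookkeeping, the ``semi-check relative to $^2E$'' that terminates precisely on correct d-calculations is a $\Pi^1_1$ predicate; but since $^2E$-computability is itself $\Delta^1_1$ (termination of an $^2E$-computation is $\Pi^1_1$ \emph{and} $\Sigma^1_1$, equivalently hyperarithmetical-in-the-oracle), the representation is in fact $\Delta^1_1$. I would emphasise that the key improvement over Lemma \ref{Lemma.4.4} is that, without the genuinely type-3 oracle $\ci$, the Gandy-selection step collapses and the delays remain hyperarithmetical, so both a $\Pi^1_1$ and a $\Sigma^1_1$ description of the representation are available.

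\textbf{The hard direction} ($\Leftarrow$): Suppose $\Phi$ is defined from a d-procedure $\Omega$ whose representation is $\Delta^1_1$; I must show $\Phi$ is computable in $^2E$. Given a total $F$, I want to compute the unique calculation in $\Omega$ matching $F$ and read off its value. The idea is to reconstruct this calculation by transfinite recursion along the well-ordering $\prec$ encoded by the denotations, using $^2E$ to drive the search. At each stage I have an initial segment (a d-string) $(D',\prec',\{(g_d,b_d)\}_{d\in D'})$ that is isomorphic to an initial segment of some calculation matching $F$; I must find the next query $f_\beta$ and then answer it by computing $F(f_\beta)$. The crucial structural fact is that the \emph{next query is determined} by the initial segment together with $F$'s answers so far, because $\Omega$ is a procedure: any two calculations agreeing on a segment have the same continuation query. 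So I would search, using $^2E$ over the $\Delta^1_1$ representation, for a d-calculation extending the current segment consistently with $F$, extract its next query, evaluate $F$ on it, and iterate.

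\textbf{The main obstacle} is controlling the \emph{search at limit (and successor) stages} so that the whole transfinite recursion is carried out by a single $^2E$-computation of bounded ordinal height. The difficulty is that the representation being $\Delta^1_1$ does not a priori bound the length $\alpha$ of the matching calculation below $\omega_1^{\mathrm{CK},F}$; I must argue that it does. The plan is to exploit $\Delta^1_1$-ness via the boundedness theorem: the ordinals $\alpha$ arising as lengths of well-orderings $(D,\prec)$ occurring in a $\Delta^1_1$ (hence hyperarithmetic) set are bounded strictly below $\omega_1^{\mathrm{CK},F}$ uniformly, so the matching calculation for each total $F$ has length below $\omega_1^{\mathrm{CK},F}$ and can be followed by transfinite recursion computable in $^2E$ (equivalently, over the admissible set $\LL_{\omega_1^{\mathrm{CK},F}}$, which is exactly the companion of $^2E$ relativised to $F$). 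Concretely, I would phrase the recursion inside $\LL_{\omega_1^{\mathrm{CK},F}}$: the predicate ``$d$-string $s$ is an initial segment of the (unique) calculation matching $F$'' is $\Delta^1_1(F)$, so by $\Sigma_1$-recursion in the admissible structure the calculation is built, and its value is $\Phi(F)$. The delays introduced by denotations must be checked to stay hyperarithmetic so that ``the next denotation is determined'' is a $\Delta^1_1(F)$ event; this is where I expect the bookkeeping to be most delicate, and I would handle it by reusing the delay analysis from Definition \ref{def.delay} together with the observation that a $\Delta^1_1$ representation forces all delays to be themselves hyperarithmetically bounded.
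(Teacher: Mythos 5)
Your right-to-left direction (from a $\Delta^1_1$ d-procedure to computability in $^2E$) is essentially the paper's own argument: $\Sigma^1_1$-boundedness applied to the well-orderings occurring in the representation, followed by a transfinite recursion, driven by $^2E$ and Gandy selection, that reconstructs the calculation matching $F$ up to isomorphism, the next query being a $\Delta^1_1$-singleton relative to the segment built so far. Two cosmetic points: since the representation is lightface $\Delta^1_1$, boundedness already gives a single \emph{computable} ordinal $\lambda$ bounding all calculation lengths (cleaner than your relativised bound below $\omega_1^{\mathrm{CK},F}$, though either suffices), and your worry about delays is unnecessary in this direction, since you work up to isomorphism and never need to recover the actual denotations.

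The genuine gap is in the left-to-right direction. Your only justification for the representation being $\Delta^1_1$ rather than merely $\Pi^1_1$ is the claim that ``termination of an $^2E$-computation is $\Pi^1_1$ \emph{and} $\Sigma^1_1$''. This is false: the relation $\{e\}(^2E,F)\!\downarrow$ is $\Pi^1_1$-complete, since the $^2E$-semicomputable relations are exactly the $\Pi^1_1$ relations. (What is true is that the \emph{graph} of a total $^2E$-computable functional is $\Delta^1_1$; but you never invoke totality of $\Phi$, and that distinction is exactly where the work lies.) As it stands, your construction delivers only what Lemma \ref{Lemma.4.4} already gives, namely a $\Pi^1_1$ representation; the improvement to $\Delta^1_1$, which is the entire content of this direction, is unsupported. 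Note also that the obvious description of the representation begins with ``$(D,\prec)$ is a well-ordering'', itself a $\Pi^1_1$-complete condition, so some device is needed to avoid it.

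The paper supplies two ingredients that your proposal lacks. First, the calculation attached to a terminating computation $\{e\}(^2E,F)$ is computable uniformly in $e$, $F$ and $^2E$ with \emph{no delay}: an initial segment determines not only the next query but also its denotation (this is where the absence of the type-3 oracle $\ci$ is used). Second, when checking whether a given d-string is a genuine calculation, one can drop the requirement that $(D,\prec)$ be well-ordered: the checking procedure of Lemma \ref{Lemma.4.4} is partial $^2E$-computable and halts (accepting or rejecting) on every self-consistent string, because $\Phi$ is total, so some total $F$ matching the string has a terminating computation. Membership in the representation is then expressible both as ``the checker halts and accepts'' (a $\Pi^1_1$ condition) and as ``the string is self-consistent and the checker does not halt and reject'' (a $\Sigma^1_1$ condition), whence $\Delta^1_1$. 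The false complexity claim in your proposal cannot substitute for these two steps.
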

\begin{proof}
First let $\Phi$ be definable from the d-procedure $\Omega$, and assume that the representation is $\Delta^1_1$. By the boundedness theorem for $\Sigma^1_1$-sets of codes for ordinals, see e.g.  \cite[Exercise II 5.9]{Sacks}, there will be a computable ordinal $\lambda$ such that all calculations in $\Omega$ have order-types bounded by $\lambda$. Let $(X,\lhd)$ be a computable well-ordering of length $\lambda$, and for each $x \in X$, let $X_x = \{y \in X : y \lhd x\}$. For each $F$, and by recursion on the $\lhd$-rank of $x \in X$, we will use $F$ and $^2E$ to \emph{compute} a string indexed by $X_x$ that matches $F$ and is, modulo the choice of denotations, isomorphic to the calculation in $\Omega$ matching $F$, until $\Omega $ tells us what the value $\Phi(F)$ must be. We use the recursion theorem, and explain the step from $x$ to its $\lhd$-successor $x'$. So, as an induction hypothesis, we assume that we have constructed the string $t = \{(f_y , a_y,y)\}_{y \lhd x}$. This string is \emph{isomorphic} to an initial segment of $((D,\prec,\{(f'_d,a'_d)\}_{d \in D},c)$ if there is a $d \in D$ with the same rank as $x$, and the corresponding isomorphism $r$ from $X_x$ to $D_d$ will satisfy that $f_y = f'_{r(y)}$ and $a_y = a'_{r(y)}$ for all $y \in D_x$. Now, the set $\Omega_t$ of calculations in $\Omega$ such that the string $t$ is isomorphic to an initial segment will be $\Delta^1_1$ relative to $t$. 
\smallskip

In order to know what to do next, we first have to split between the two cases: are we able to give out a value for $\Phi(F)$ or must we continue the evaluation, that is, identifying, up to isomorphism,  a larger part of the calculation matching $F$? Since $\Omega$ has a calculation for  $\Phi(F)$, we know that there is at least one calculation in $\Omega$ of which $t$ is isomorphic to an initial segment. Moreover, if $t$ is actually isomorphic to a calculation in $\Omega$, this is unique, and $\Omega$ provides us with the value. So, in order to decide between the two cases, we ask a $``\Delta^1_1(t)"$-question, i.e. a $\Sigma^1_1(t)$-question and a $\Pi^1_1(t)$-question that are equivalent. Those are: \begin{center}`is $t$ isomorphic to a proper initial segment of some element of  $\Omega_t$?'\end{center} and  \begin{center}`is $t$ isomorphic to proper initial segments of all elements in $\Omega_t$?' .\end{center}  In the case the answer is `no', we have constructed a copy of the calculation matching $F$. Then we can compute the unique value $\Phi(F) = c$ from the data. On the other hand, if the answer is `yes' ,  there will be a next query $f_x$ that will be unique for all calculations  in $\Omega_t$.  $\{f_x\}$ is a $\Delta^1_1$-singleton relative to $t$, and we can compute each $f_x(n)$ from $t$ and $^2E$. In both cases, we can rely on Gandy selection. 
This proves the theorem one way.

\medskip

 Now assume that $\Phi(F) = \{e\}(^2E,F)$. We can construct a d-procedure $\Omega$ for $\Phi$  in analogy with the one we constructed in the proof of Lemma \ref{Lemma.4.4}, without relativizing the construction to a set of $\vec \Phi$ with  $\Pi^1_1$-procedures. That the result now will be $\Delta^1_1$ can be seen from the following consideration:
\begin{itemize}
\item[i)] Whenever $\{e\}(^2E,F)\!\!\downarrow$ we can compute the corresponding  calculation
\[(D,\prec ,\{(f_d,a_d)\}_{d \in D},c)\] uniformly in $e$, $F$ and $\exists^2$, by use of the recursion theorem. It is worth noticing that there will be no delay here, given $\{f_{d},a_{d'}\}_{d' \prec d}$, we do not only have a unique value for the next $f$, but also for its denotation $d$, even if the calculations  are matching different $F$s.

\item[ii)] Next we observe that when checking if a representation $(D,\prec , \{f_d,a_d\}_{d \in D})$ of a d-string is a real representation of a real calculation of a value, we can relax the requirement that $(D,\prec)$ is a well ordering. The checking the way we did it in the proof of Lemma \ref{Lemma.4.4} is partially computable by the recursion theorem, and can be proved to terminate for a given $e$ under the assumption that there is at least one total $F$ matching the given d-string such that $\{e\}(^2E,F)\!\!\downarrow$. 
\end{itemize}
This shows that the  d-procedure will be $\Delta^1_1$ in this case.\end{proof}
\begin{remark}{\em We have essentially used that the element of a $\Delta^1_1$-singleton is itself hyperarithmetical, and this implicitly provides us with a {\sc next}-function in this case.
\newline
There is no delay in the $\Delta^1_1$-d-procedure constructed in the above argument, the denotations in an initial segment of a calculation is uniquely determined by the segment itself.}\end{remark}

\subsection{The class ${\bf W}(\ci)$}
In this section we will give a closer analysis of the class of functionals of type 3 that are computable in $\ci$. 
We will do so by investigating  additional properties of the elements in the following class:
\begin{definition}\label{Def.5.4}{\em Let ${\bf W}(\ci)$ be the set of d-procedures for functionals $\Phi$ computable in $\ci$ as constructed in the proofs of Lemma \ref{Lemma4.2} and Lemma \ref{Lemma.4.4}.
}\end{definition}
\subsubsection{Tame d-procedures}
In this sub-section we will introduce two properties shared by all d-procedures in ${\bf W}(\ci)$.
\begin{definition}{\em 
 Let $\Omega$ be a procedure defining a total functional. 
 \begin{itemize}
 \item[a)]Let  $\Omega_{\rm pre}$ be the set of of  $(D,\prec,\{(f_d,a_d)\}_{d \in D})$ that are isomorphic to an initial segment of a calculation  in $\Omega$.
 \item[b)] Let {\sc next}$_\Omega$ be the function mapping $ t \in \Omega_{\rm pre}$ to the disjoint union of $\N^\N$ and $\N$ such that \begin{itemize}
 \item[-] If $t$ is isomorphic to a calculation $t'$ in $\Omega$, then {\sc next}$_\Omega(t)$ is the value of this calculation
 \item[-] If $t$ is isomorphic to a proper initial segment $t'$ of a calculation $t''$ in $\Omega$, then {\sc next}$_\Omega(t) = (f,c)$ where $F(f) = ?$ is the next query after $t'$ in $t''$ (independent of the choice of $t''$) and $c \in \{0,1\}$. Moreover, if $c = 0$, then the  next pair in $t''$ after $t'$ will be of the form $(f,\ast)$ while if $c = 1$, we continue $t'$  with a pair $(f,a)$ in $t''$ for some $a \in \N$.
  \end{itemize} 
 \end{itemize}

}\end{definition}
\begin{definition}{\em Let $\Omega$ be a d-procedure for a total functional. We say that $\Omega$ is \emph{tame} if $\Omega$ is $\Pi^1_1$, $\Omega_{\rm pre}$ is $\Pi^1_1$ and {\sc next}$_\Omega$ is partially computable in $^2E$.}\end{definition}
\begin{lemma}\label{Lemma.5.2} Let $\Omega$ be the d-procedure for $\ci$. Then $\Omega$ is tame.\end{lemma}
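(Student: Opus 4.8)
The plan is to verify the three clauses in the definition of tameness separately. That $\Omega$ is $\Pi^1_1$ is exactly Lemma \ref{Lemma4.2}, so the work lies in showing that $\Omega_{\rm pre}$ is $\Pi^1_1$ and that {\sc next}$_\Omega$ is partially computable in $^2E$. Throughout I would use the explicit shape of the calculations found in the proof of Lemma \ref{Lemma4.2}: a calculation matching $G$ has order type $\omega(\alpha+1)$, the query at the position of rank $\omega\cdot\beta + a$ is $a\what f_\beta$ (with $f_\beta$ the stage-$\beta$ set of the induction of Definition \ref{def.ind}), the answer recorded there is $G(a\what f_\beta)$, and the value is $f_\alpha(b)$. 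The key observation is that the whole induction is recoverable from the recorded answers: writing $b_{\omega\gamma+a}$ for the answer at the position of rank $\omega\gamma+a$, one has $f_\beta(x)=\max_{\gamma<\beta}\min\{1,b_{\omega\gamma+x}\}$, which is a number-quantifier computation along the underlying well-ordering and hence decided by $^2E$.

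For $\Omega_{\rm pre}$ I would mimic the layered argument $\Omega_1,\ldots,\Omega_4$ of Lemma \ref{Lemma4.2}. A structure $(D',\prec',\{(g_d,b_d)\}_{d\in D'})$ lies in $\Omega_{\rm pre}$ exactly when (i) $(D',\prec')$ is a well-ordering; (ii) it is \emph{locally correct}, i.e.\ reconstructing $f_\beta$ from the answers by the recovery formula above, each query $g_d$ equals the prescribed $a\what f_\beta$ dictated by the rank of $d$; and (iii) there is \emph{no premature closure}: for every stage $\beta$ all of whose $\omega$ queries are present and which is followed by a query of higher rank, the induction did not close at $\beta$, i.e.\ $f_{\beta+1}\neq f_\beta$. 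Clause (i) is $\Pi^1_1$, while clauses (ii) and (iii) refer only to $\prec'$-earlier positions and are therefore arithmetical (in particular decided by $^2E$), so the conjunction is $\Pi^1_1$. Conversely, any structure passing these tests agrees with the initial segment of the $\ci$-calculation of any $G$ extending the partial assignment it records — such $G$ exists because the queries $a\what f_\beta$ are pairwise distinct under (iii) — so the three clauses characterise $\Omega_{\rm pre}$ exactly.

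Finally, for {\sc next}$_\Omega$ I would, on input $t\in\Omega_{\rm pre}$, reconstruct the $f_\beta$ and compute the order type $\mu$ of $t$ using $^2E$, then branch. If $\mu=\omega\beta+a$ with $0<a<\omega$ we are mid-stage and return the next query $(a\what f_\beta,1)$; if $\mu=\omega\beta$ with $\beta$ a limit we return the first query $(0\what f_\beta,1)$ of stage $\beta$; and if $\mu=\omega(\beta+1)$ we have just completed stage $\beta$ and test closure $F(f_\beta)\leq f_\beta$ (equivalently $f_{\beta+1}=f_\beta$) with $^2E$: if it holds, $t$ is isomorphic to a full calculation and we output the value $f_\beta(b)$, and otherwise we output the next query $(0\what f_{\beta+1},1)$. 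Since the $\ci$-procedure queries $G$ directly and inserts no LOG entries, the flag is always $1$ in the non-terminating case. Every step — ordinal bookkeeping along $\prec'$, the reconstruction of $f_\beta(x)$, and the closure test — is a number-quantifier computation, so {\sc next}$_\Omega$ is partially computable in $^2E$.

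I expect the only genuinely delicate point to be clause (iii): local correctness alone does not forbid a ``stuttering'' continuation past a closure stage, where $f_{\beta+1}=f_\beta$ makes the stage-$(\beta+1)$ queries and answers merely repeat those of stage $\beta$, yet such a structure is too long to be an initial segment of any calculation in $\Omega$. Formulating the premature-closure condition correctly, and ensuring that $\Omega_{\rm pre}$ and {\sc next}$_\Omega$ decode stages and closure in the same way, is where the care is needed; the rest is routine bookkeeping with $^2E$.
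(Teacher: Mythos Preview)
Your proof is correct and follows essentially the same route as the paper: the $\Pi^1_1$ complexity is carried entirely by the well-ordering condition, while local correctness of the $\omega$-blocks, the reconstruction of the $f_\beta$, and the closure test are all arithmetical in the representation and hence handled by $^2E$. The paper's proof is terser and folds your clause~(iii) into the phrase ``locally satisfies the recursion in $\ci$'' without comment, so your explicit isolation of the no-premature-closure condition is an improvement in rigour rather than a different argument; the stuttering phenomenon you flag is real and must indeed be excluded, and your formulation of~(iii) does that arithmetically as required.
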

\begin{proof}
We need the full complexity of $\Pi^1_1$ to formulate that we are dealing with well orderings $(D,\prec)$, the rest is actually arithmetical. Each step in the underlying recursion takes $\omega$ many steps when we evaluate according to $\Omega$. It is clearly arithmetical to decide if any ordered set of pairs $(f,a)$ indexed over $\N$ locally satisfies the recursion in $\ci$, so checking if a d-string is in $\Omega_{\rm pre}$ is arithmetical when we know that the representation is well ordered. If a string is locally correct, the next query is arithmetically defined if there is one, and the value is arithmetically expressible from the  list of queries answers if the d-string corresponds to a calculation, so {\sc next}$_\Omega$ is computable in $^2E$ as requested.
\end{proof}
\begin{lemma} 
The class of functionals definable from tame d-procedures $\Omega$  is closed under Kleene computability.
\end{lemma}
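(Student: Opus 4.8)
The plan is to reuse the construction from the proof of Lemma \ref{Lemma.4.4} without change. Given tame d-procedures $\Omega_1 , \ldots , \Omega_n$ defining $\Phi_1 , \ldots , \Phi_n$ and an index $e$ for which $\lambda F.\{e\}(\vec\Phi , F , \vec f , \vec a)$ is total, that proof already produces a d-procedure $\Omega$ defining this functional together with a verification that $\Omega$ is $\Pi^1_1$. Hence the first of the three defining conditions of tameness comes for free, and the whole task is to verify the other two: that $\Omega_{\rm pre}$ is $\Pi^1_1$ and that {\sc next}$_\Omega$ is partially computable in $^2E$. Throughout I would use the standard fact, already exploited in Lemma \ref{Lemma.4.4}, that a set is $\Pi^1_1$ precisely when it is the halting domain of some algorithm relative to $^2E$.

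For $\Omega_{\rm pre}$ I would run the same $^2E$-algorithm that, in Lemma \ref{Lemma.4.4}, semi-checks whether a representation of a d-string is the representation of a genuine d-calculation, but relax its bookkeeping so that it instead semi-checks whether the d-string is isomorphic to an \emph{initial segment} of a calculation. The case analysis on the schemes is identical: in the composition case $t$ may stop inside the $\langle 0,\cdot\rangle$-part or inside the $\langle 1,\cdot\rangle$-part; in the $F$-application case it may stop inside one of the $\omega$ blocks or just before the terminal pair $(f,F(f))$; and in the $\Phi_i$-application case it may stop inside one of the blocks computing $H(g_\beta)$, while the completed blocks must assemble into an element of $\Omega_{i,\rm pre}$. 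Since $\Omega_{i,\rm pre}$ is $\Pi^1_1$ by the tameness of $\Omega_i$, its membership test can be invoked as a semi-decision subroutine inside the algorithm. As before the algorithm is defined by the recursion theorem relative to $^2E$ and halts exactly on the intended inputs, so $\Omega_{\rm pre}$ is $\Pi^1_1$.

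The function {\sc next}$_\Omega$ I would define directly by the recursion theorem relative to $^2E$, by cases on the schemes, feeding in as oracles the functions {\sc next}$_{\Omega_i}$, which are computable in $^2E$ by the tameness of the $\Omega_i$. Given $t\in\Omega_{\rm pre}$ one parses the outer denotation tags to locate the active scheme and the block in which $t$ currently ends. In the composition and $F$-application cases one recurses into the active sub-block: if it is still a proper initial segment one returns the query and LOG-bit that the recursive {\sc next} prescribes, whereas if it has become complete one reads off its value and passes control onward — to the first query of the next sub-block in composition, to the first query of the next of the $\omega$ blocks in $F$-application, or, once all $\omega$ blocks are complete, to the terminal query $f$ whose value at each $c$ is the value of the $c$-th block, tagged with LOG-bit $1$ because $F(f)$ is a genuine oracle answer. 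Observe that {\sc next}$_\Omega$ need only return the next query together with its LOG-bit, never a denotation, so the length-$\omega$ delay that afflicts denotations in the procedure for $\ci$ plays no role here.

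The main obstacle is the $\Phi_i$-application case of {\sc next}$_\Omega$. Here $t$ is a concatenation of blocks, the $\beta$-th block being the log entry $(g_\beta,\ast)$ followed by the sub-calculation computing $H(g_\beta)=a_\beta$, where $H(g)=\{e_1\}(\vec\Phi , F , g , \vec f , \vec a)$. If the last block is still a proper initial segment, one recurses into it and returns the query and LOG-bit it prescribes. If the last block is complete, one collects the pairs $(g_0,a_0),\ldots,(g_\beta,a_\beta)$ accumulated so far — each $a_\delta$ being the value delivered by the recursive {\sc next} on block $\delta$ — which by construction form an element of $\Omega_{i,\rm pre}$, and applies {\sc next}$_{\Omega_i}$ to it: if this returns a fresh query $g_{\beta+1}$ then {\sc next}$_\Omega(t)=(g_{\beta+1},0)$, the LOG-bit $0$ recording that the next pair is the log entry $(g_{\beta+1},\ast)$, while if it returns a value $c$ then $t$ is isomorphic to a full calculation and {\sc next}$_\Omega(t)=c$. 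The delicate point is that recognising a block boundary is exactly the act of detecting, through the recursion, that the value branch of the inner {\sc next} has been reached; correctness therefore rests on the induction on the length of computations already carried out for Lemma \ref{Lemma.4.4}. Granting that, the computability of {\sc next}$_\Omega$ in $^2E$ follows from the computability of each {\sc next}$_{\Omega_i}$ together with the fact that all remaining parsing and bookkeeping is arithmetical, and the three conditions together make $\Omega$ tame.
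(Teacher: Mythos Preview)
Your overall strategy---recursion on the Kleene schemes via the recursion theorem for $^2E$, feeding in the tameness data of the $\Omega_i$---is the same as the paper's, and the case analysis you outline is essentially what the paper does. There is, however, one genuine slip that you should repair.

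By definition, an element $(D,\prec,\{(f_d,a_d)\}_{d\in D})$ of $\Omega_{\rm pre}$ is only required to be \emph{isomorphic} to an initial segment of a calculation in $\Omega$, and the notion of isomorphism (Definition immediately preceding Lemma~\ref{Lemma4.2}, item c)) matches only the $(f,a)$-data, not the denotations. Hence the denotations in an element of $\Omega_{\rm pre}$ are arbitrary integers that happen to well-order the string; they carry no structural information whatsoever. Your plan to ``parse the outer denotation tags to locate the active scheme and the block in which $t$ currently ends'' therefore cannot work, and neither can your proposal to reuse the Lemma~\ref{Lemma.4.4} algorithm directly for $\Omega_{\rm pre}$: that algorithm crucially inspects tag values such as $\langle 0,d'\rangle$ versus $\langle 1,d'\rangle$, which is legitimate when testing membership in $\Omega$ itself (where the denotations are canonical) but meaningless for $\Omega_{\rm pre}$.

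The paper handles this by abandoning tag-parsing altogether and instead performing $(D,\prec)$-recursion along the given string: at each point one tests the current initial segment against the recursively defined set $X_{e_2,\vec f,\vec a}$ (or the appropriate analogue), using the recursive {\sc next} to detect when that segment has become a full sub-calculation with a value, and only then moving on to the next component. This is exactly the mechanism you yourself identify as ``the delicate point'' in the $\Phi_i$-case---detecting block boundaries by seeing the value branch of the inner {\sc next}---but it must be the \emph{sole} mechanism in every case, not a supplement to tag-parsing. The paper also stresses that because the sets involved are only semi-decidable in $^2E$, one cannot take NO for an answer; one must run the $(D,\prec)$-recursion and use Gandy selection rather than ask yes/no membership questions. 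Once you rewrite your argument in these terms, it coincides with the paper's.
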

\begin{proof}
Let $\vec \Phi = \Phi_1, \ldots ,\Phi_n$ be defined from the tame d-procedures $\Omega_1 , \ldots , \Omega_n$. We already know that the d-procedure for any $\Phi$ computable in $\vec \Phi$ is $\Pi^1_1$.
\smallskip

We use the recursion theorem to define, for each index $e$ and extra inputs $\vec f$ and $\vec a$, a set $X_{e,\vec f , \vec a}$ of d-strings \[((D, \prec , \{(f_d,a_d)\}_{d \in D})\] 
that is  $\Pi^1_1$ uniformly in $e,\vec f , \vec a$, together with the function
\begin{center}{\sc next}$_{e, \vec f , \vec a}$\end{center}
defined on $X_{e,\vec f , \vec a}$, and show that 
\begin{itemize}
\item[i)] If $F$ is of type 2 and $\{e\}(\Phi_1 , \ldots , \Phi_n, F , \vec f , \vec a) = c$ and $(D,\prec , \{(f_d,a_d)\}_{d \in D} , c)$ is the associated d-calculation obtained from the proof of Lemma \ref{Lemma.4.4}, then any string isomorphic to an initial segment of $(D,\prec , \{(f_d,a_d)\}_{d \in D} )$ is in $X_{e,\vec f , \vec a}$.
\item[ii)] If a string is in $X_{e,\vec f , \vec a}$ and matches some $F$ for which $\{e\}(\Phi_1 , \ldots , \Phi_n, F , \vec f , \vec a)$ terminates, then the string is isomorphic to an initial segment  of the d-calculation  obtained through the proof of Lemma \ref{Lemma.4.4}.
\item[iii)] {\sc next}$_{e,\vec f , \vec a}$ is computable in $^2E$ uniformly in the parameters and acts as specified.
\end{itemize}

\smallskip

We define $X_{e,\vec f , \vec a}$ and {\sc next}$_{e , \vec f , \vec a}$ by cases according to the scheme $e$ represents. Note that since we are dealing with semi-decidable sets, we cannot take NO for an answer, and search-procedures have to use Gandy selection:

\smallskip

- $e$ is an index for an initial computation given by S1-S3, S7: $X_{e,\vec f , \vec a}$ will consist of the empty string only. {\sc next}$_{e, \vec f , \vec a}$ is trivially given in all these cases.

\smallskip

- Composition: \[\{e\}(\vec \Phi , F , \vec f , \vec a) = \{e_1\}(\vec \Phi , F , \vec f , \{e_2\}(\vec \Phi , F , \vec f , \vec a ) , \vec a).\]
Given $(D, \prec , \{(f_d , a_d)\}_{d \in D})$, where $(D,\prec)$ is a well ordering, we use $(D,\prec)$ recursion to test if the initial segments of $(D, \prec , \{(f_d , a_d)\}_{d \in D})$ are in $X_{e_2 , \vec f , \vec a}$ until we either found an initial segment  that is in  $X_{e_2 , \vec f , \vec a}$ and with a value $c$ or we find that the given string is in  $X_{e_2 , \vec f , \vec a}$, and thus in $X_{e,\vec f , \vec a}$. If this search fails, the given string is not in  $X_{e , \vec f , \vec a}$, and non-termination is not a problem. If this search ends with a proper substring that is in  $X_{e_2 , \vec f , \vec a}$ and with a value $c$, we compare the rest of the string with  $X_{e_1 , \vec f , c, \vec a}$ in the same way. The {\sc next}-function for $e$ will be inherited from the {\sc next}-functions of $e_2$ and $e_1 , c$, and these can be used to check that the given string does not go too far, beyond where we should have a valued string.
\smallskip

- We leave the cases for permutation and S9 for the reader, as those cases are even simpler.
\smallskip  

-  Application of $F$:
\[\{e\}(\vec \Phi , F , \vec f , \vec a) = F(\lambda a. \{e_1\}(\vec \Phi , F , \vec f , a , \vec a)).\]
In analogy with how we dealt with composition, we can compare a given string poin-by-point with elements in $X_{e_1 , \vec f , 0 , \vec a}$ , in $X_{e_1 , \vec f , 1 , \vec a}$ and so forth until we either find that the given string is a concatenation of finitely many strings in these sets, all except the last one maximal, that it is the concatenation of one maximal string from each $X_{e_1 , \vec f , a , \vec a}$ in increasing order or that it even contains a final $(g,b)$ at the end. In the last case, we also must check if $g(a)$ is the value of the string from $X_{e_1 , \vec f , a , \vec a}$ used in the concatenation before accepting the given string. When accepted, we inherit the {\sc next}-function in the obvious way.
\smallskip 

- Application of $\Phi_i$:
\[\{e\}(\vec \Phi , F , \vec f , \vec a) = \Phi_i(\lambda g.\{e_1\}(\vec \Phi , F , g , \vec f , \vec a)).\]
Here it may be useful to look back on Definition \ref{Def.3.3}. We explain informally how we, point by point, compare the initial segments of the given string $(D,\prec , \{(f_d,a_d)\}_{d \in D})$ with the requirements for $\Omega_i$ and the various sets \[X_{e_1,g,\vec f , \vec a}\] where we may assume that we have defined these sets as a part of  the induction hypothesis:
\begin{itemize}
\item If $d_0$ is the $(D , \prec)$-least element, $f_{d_0}$ has to be the first query $g_0$ in $\Omega_i$, given to us by {\sc next}$_{\Omega_i}$, with $a_{d_0} = \ast$.
\item We check the next segment of $(D,\prec , \{(f_d,a_d)\}_{d \in D})$ for membership in $X_{e_1,g,\vec f , \vec a}$ until we have reached a value or until the given string is exhausted. 
\item In the latter case, the string is in $X_{e,\vec f , \vec a}$ and in the first case, we let $b_0$ be the value, check that the  the pair $(g_0 , b_0)$ is in in ${\Omega_i}_{\rm pre}$ and use the {\sc next}-function of $\Omega_i$ to verify that the next query in $\Omega_i$ will be the next query in $(D,\prec , \{(f_d,a_d)\}_{d \in D})$. 
\item By transfinite recursion on $(D,\prec)$ we can continue this comparison until the given string is exhausted or until it does not compare with strings in $\Omega_i$ (whenever we have found a new value there, and can use its {\sc next}-function to find the new $g_\beta$), or with the strings in the sets  $X_{e_1,g_\beta,\vec f , \vec a}$.

\end{itemize}
It is now routine to verify the  properties i) - iii). \end{proof}

\subsubsection{Blocking}
It is not the case  that all functionals definable by a tame d-procedures will be computable in $\ci$, see Theorem \ref{Theorem.5.10}. The point with the denotations is that they may make it easier to design non-monotone inductions that are copying evaluations in a procedure, but the obstacle will be that we will not, in general, be able to tell from a part of a calculation what the correct denotation of the next query will be, there may be a  \emph{delay} as defined in \ref{def.delay}. We find this phenomenon in the procedure for  $\ci$, where we first must establish, within each in  a series of $\omega$-length sub-calculations, if we reached the final fixed point or not before reading off the correct denotation. That the situation would be simpler without this  obstacle, is seen from the following lemma:
\begin{lemma} \label{obs.2} Let $\Omega$ be a d-procedure for a total functional such that for any  sub-string $\{(f_\beta , a_\beta , d_\beta)\}_{\beta < \alpha}$ of a calculation in $\Omega$, the denotations $d_\beta$ are uniquely given by $\{(f_\beta , a_\beta)\}_{\beta < \alpha}$.
\newline
If $\Omega$ in addition is tame, and the unique choice of $d_\beta$ is computable from $\{(f_\gamma,a_\gamma\}_{\gamma \leq \beta}$ and $^2E$ uniformly at each stage, then the functional defined by $\Omega$ is computable in $\ci$.\end{lemma}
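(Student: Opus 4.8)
The plan is to simulate the evaluation of $\Omega$ on an input $F$ of type $2$ by a single non-monotone induction, driven by $\ci$, whose stages reproduce the positions $\beta < \alpha$ of the unique calculation $\{(f_\beta,a_\beta,d_\beta)\}_{\beta<\alpha}$ of $\Omega$ matching $F$. The object built up by the induction will be a code $A \subseteq \N$ of the \emph{representation} $(D,\prec,\{(f_d,a_d)\}_{d\in D})$ of the initial segment of that calculation constructed so far; concretely $A$ stores labelled nodes coding $\langle 0,d,f_d,a_d\rangle$ together with the order facts $\langle 1,d,d'\rangle$ recording $d \prec d'$. The point of the no-delay hypothesis is precisely that it makes each \emph{individual} step of this induction computable in $^2E$ and $F$: when we are about to append the $\beta$-th node we already know its denotation $d_\beta$, since by assumption $d_\beta$ is a $^2E$-computable function of the initial segment $\{(f_\gamma,a_\gamma)\}_{\gamma\le\beta}$ that we have in hand.

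First I would fix $F$ and define an inductive operator $\Gamma_F:\Ca\to\Ca$ as follows. Given $A$, decode from it the representation $t = (D,\prec,\{(f_d,a_d)\}_{d\in D})$ of the current initial segment; storing the order facts explicitly is what keeps this decoding arithmetical in $^2E$, since the order $\prec$ cannot in general be recovered from the bare set of denotations without replaying the whole transfinite process. Now apply {\sc next}$_\Omega$, which is partially computable in $^2E$ by tameness. If {\sc next}$_\Omega(t)$ returns a value, $\Gamma_F$ adds nothing. If it returns a pair $(f,c)$, put $f_\beta = f$, let $a_\beta = \ast$ when $c=0$ and $a_\beta = F(f_\beta)$ when $c=1$, compute the denotation $d_\beta$ from $\{(f_\gamma,a_\gamma)\}_{\gamma\le\beta}$ and $^2E$ by the hypothesis, and let $\Gamma_F(A)$ consist of the new node coding $\langle 0,d_\beta,f_\beta,a_\beta\rangle$ together with the order facts $\langle 1,d_\gamma,d_\beta\rangle$ for every $d_\gamma$ already present in $A$. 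Since {\sc next}$_\Omega$, the choice of $d_\beta$ and the reading of $F(f_\beta)$ are all computable in $F$ and $^2E$, the operator $\Gamma_F$ is computable in $F$ and $^2E$ uniformly in $F$; crucially we never have to verify the $\Pi^1_1$-condition that the decoded $t$ really lies in $\Omega_{\rm pre}$, because the invariant below guarantees that the only sets $\Gamma_F$ is ever applied to along the induction are genuine initial segments.

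Next I would verify, by transfinite induction on $\beta$, that the stage-$\beta$ set $A_\beta$ of the induction $\ci(\Gamma_F)$ is exactly the code of the length-$\beta$ initial segment of the calculation matching $F$. The successor step is immediate from the definition of $\Gamma_F$ and the determinism of {\sc next}$_\Omega$, while at a limit $\lambda$ the union $\bigcup_{\beta<\lambda}A_\beta$ is precisely the code of the length-$\lambda$ initial segment: that segment is the union of its proper initial segments, and since denotations within one calculation are distinct no two stages contribute conflicting codes. Because $\Omega$ defines a \emph{total} functional, the matching calculation exists and has some length $\alpha$; at stage $\alpha$ the decoded string is the full calculation, so {\sc next}$_\Omega$ returns its value and $\Gamma_F$ adds nothing, whereas at every earlier stage {\sc next}$_\Omega$ returns a query and $\Gamma_F$ adds a node carrying a fresh denotation. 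Hence the closure ordinal is exactly $\alpha$ and $\ci(\Gamma_F)=A_\alpha$ codes the entire calculation. One final application of {\sc next}$_\Omega$ to $A_\alpha$, again computable in $^2E$, extracts the value $\Phi(F)$. Since $^2E$ is computable in $\ci$, this exhibits $\Phi(F)$ as computed from $F$ together with a single application of $\ci$, uniformly in $F$, so $\Phi$ is computable in $\ci$.

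The main obstacle, and the only place the extra hypotheses are genuinely used, is making each induction step a bona fide $^2E$-computation. Tameness supplies {\sc next}$_\Omega$, but on its own it would only reveal the next query $f_\beta$ and its answer, not the denotation $d_\beta$ under which the new node must be filed; were there a delay, $d_\beta$ could depend on later stages and $\Gamma_F$ could not be evaluated with the information available at stage $\beta$. The no-delay hypothesis removes exactly this obstruction, and the explicit bookkeeping of the order $\prec$, rather than trusting the denotations to encode it, is what keeps the reconstruction of the current string arithmetical in $^2E$ instead of transfinite.
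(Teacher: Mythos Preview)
Your proposal is correct and follows essentially the same approach as the paper: code the growing initial segment as a set of labelled triples together with the ordering of the denotations, and drive the construction by a single application of $\ci$ to an operator $\Gamma_F$ whose successor step calls {\sc next}$_\Omega$, evaluates $F$ (or records $\ast$), and then uses the no-delay hypothesis to compute the fresh denotation $d_\beta$ in $^2E$.

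The one point of divergence worth flagging is how totality of $\Gamma_F$ is handled. The paper makes $\Gamma_F$ \emph{total} by first using the Suslin functional $\su$ (computable in $\ci$) to test whether the given $X$ codes an element of $\Omega_{\rm pre}$, returning $X$ unchanged if not; this is possible because $\Omega_{\rm pre}$ is $\Pi^1_1$. You instead leave $\Gamma_F$ \emph{partial} and argue via the invariant that along the actual induction $\Gamma_F$ is only ever applied to codes of genuine initial segments, so {\sc next}$_\Omega$ and the denotation algorithm are always defined where needed. This is legitimate because the paper has already established that the $p$- and $t$-interpretations of computability in $\ci$ coincide, but you should be explicit that you are invoking the $p$-interpretation here: {\sc next}$_\Omega$ is only partially computable in $^2E$, so your $\Gamma_F$ is not a total operator, and the appeal to $\ci(\Gamma_F)$ is through Definition~\ref{Part} rather than the total schemes. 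Either route works; the paper's $\su$-guard buys a total $\Gamma_F$ at the cost of a slightly heavier operator, while your version is lighter but leans on the earlier equivalence theorem.
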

\begin{proof}
We code an entry $(f_\beta , a_\beta , d_\beta)$ as the set \[\{\langle \bar f_\beta(n) , a_\beta , d_\beta\rangle : n \in \N\}.\]
These sets will be disjoint, so we may code each initial segment of a string as a pair of sets, where one is the union $X_\beta$ of such single codes and the other is the corresponding ordering of the denotations. We will use that the Suslin functional $\su$ is computable in $\ci$.
\newline
Given $F$, we design a non-monotone inductive definition $\Gamma_F$ computable in $\su$ that simulates the evaluation of the calculation matching $F$:
\begin{enumerate}
\item Given $X$, we use $\su$ to check if $X$ codes an initial segment of a calculation in $\Omega$ as coded above. If not, let $\Gamma_F(X) = X$, and if it does, continue.
\item Use {\sc next}$_\Omega$ to find the next query $f_\alpha$, use $F$ to find $a_\alpha = F(f_\alpha)$ and finally the $^2E$ algorithm that gives us the unique denotation $d_\alpha$.
\item Let $\Gamma_F(X)$ be $X$ extended with the code for $(f_\alpha , a_\alpha , d_\alpha)$ in the set to the left and all pairs $\langle d_\beta , d_\alpha\rangle$ for $\beta < \alpha$ in the set to the right.
\end{enumerate}
It is clear that the set $\ci(\Gamma_F)$ will code the calculation in $\Omega$ matching $F$, together with the ordering of all the denotations used in that calculation, and we can use {\sc next}$_\Omega$ to compute $\Omega(F)$.
\newline
Further details are left for the reader. \end{proof}
\bigskip

  We will now add  further structure to d-procedures, \emph{blocks}. It will be like  inserting commands of the form $\backslash\!$begin\{{\tt block}\} and $\backslash$\!end\{{\tt block}\} bracketing blocks and sub-blocks. These imaginary commands must satisfy, for each calculation, the standard rules of bracketing, allowing for infinite branchings in the length, but only finite nesting in depth. Where to put these commands will determined by the initial segment of the string up to where the command is, and the use will be to mark that there is now an uncertainty to what the denotations will be at the end, and that we have to carry out a sub-procedure, or evaluate the calculation for $F$ a bit further,  in order to find the true denotations of the calculation. We will consider two examples before giving the abstract definition of a tame d-procedure with blockings:
\begin{example}{\em Let $\Omega$ be the d-procedure for $\ci$. Recall that, given $G$, $\Omega(G) $ will iterate the induction given by $F_G$, generating the sequence $\{f_\beta\}_{\beta \leq \alpha}$ by evaluating $G$ on $0 \what f_0,1 \what f_0 , \ldots_\omega 0 \what f_1 , 1 \what f_1 \ldots_{\omega(\alpha + 1)}$.
\newline
Each $\omega$-sequence will be a block in this case, and after each block we know what the denotation for the query $G(n \what f_\beta) = \;?$ will be, but not while we are inside a block. However, in order to view the calculation within a block as a sub-procedure, we only need denotations that are unique for queries within this block, and ignore the larger picture. It is not hard to modify the proof of Lemma \ref{obs.2} to see that we can simulate the procedure for $\ci$ using $\ci$. 

}\end{example}
\begin{example}{\em In the case of composition we constructed the calculations as concatenations of strings for the two parts, and when defining the new denotations, we paired the denotations from the first part with 0 and from the second part with 1. We may consider the first part as one block and the other part as another one, but if we from the larger picture know that we are entering a composition, there is no need for this. There is no delay in deciding what the  denotations are inherited in the construction of denotations for compositions, as there is for the construction of denotations for transfinite recursions with an unknown end.

}\end{example}
We will need the blocking structure on calculations to characterise functionals of type 3 computable in $\ci$ in terms of procedures. 

The blocks will be organised in a nested way, with some blocks being sub-blocks of others. The point is that, within each block, we will define denotations along the way, and when the need of a delay is observed, we enter a sub-block where we form temporary denotations that at the end of the block will be rewritten to the true ones. The nesting of the blocks will reflect that there may be  delays within a period of delay, so the rewriting of denotations may go through several levels.

We will now give the full definition:
\begin{definition}{\em 
Let $\Omega$ be a tame d-procedure.
\begin{itemize}
\item[a)] A \emph{block} in $\Omega$ is an interval $t_2$ in a calculation $t_1 \what t_2 \what t_3$ in $\Omega$. Blocks $t_2$ in $t_1 \what t_2 \what t_3$ and $t'_2$ in $t'_1 \what t'_2 \what t'_3$ are considered to be equal if $t_1 \what t_2 = t'_1 \what t'_2$.

\item[b)] A \emph{blocking} of $\Omega$ is a set of blocks for each calculation $t$ in $\Omega$ satisfying:
\begin{itemize}
\item[i)] Given two blocks in $t$, they are either disjoint or one is included in the other.
\item[ii)] For each calculation, all chains of blocks totally ordered by inclusion will be finite. 

\item[iii)] $t$ is one of the blocks in $t$.
\item[iv)] The \emph{level} of a block $t_1$ in $t$ is the number of other blocks in $t$ properly containing $t_1$ as a substring.
\item[v)] If $t = t_1\what t_2$ and $t' = t_1 \what t'_2$ are two calculations in $\Omega$ with a common initial substring $t_1$, and if a block of level $m$ in $t$ starts at the beginning of $t_2$, then a block of level $m$ starts in $t'$ at the beginning of $t_2'$. 

Moreover, if the two blocks coincide until one of them ends, they are equal.

\end{itemize}
iii) above just makes the rest easier to express.
\item[c)] The blocking is \emph{tame} if we in addition have 
\begin{itemize}
\item[i)] There is a partial function {\sc block}$_\Omega$ computable in $^2E$ such that  for each string $t$ in $\Omega_{\rm pre}$, {\sc block}$_\Omega(t)$ decides for each $m$ if there is a  block of level $m$  starting at the next query {\sc next}$_\Omega(t)$ and decides the levels of the blocks, if any,  ending before the next query. 
\item[ii)] For each block $s = \{(f_\delta , a_\delta)\}_{\gamma \leq \delta < \beta}$ in a calculation $t$ there is a unique injective denotation $\{d^s_\delta\}_{\gamma \leq \delta < \alpha}$, where these denotations are computed using the two functions {\sc denote} and {\sc redenote} (with subscript $\Omega$ if needed) both computable in $^2E$ and where
\begin{itemize}
\item if $t_1 \what (f_\beta , a_\beta)$ is an initial substring of the calculation $t$ and $s$ is the block of highest level containing $(f_\beta , a_\beta)$ then {\sc denote}$(t_1 , a_\beta)$ will be the denotation $d^s_\beta$.

\item If $s$ is a block of level $m > 1$ contained in the calculation $t = t_1 \what s_1 \what s \what s_2\what t_3$, where  $t_2= s_1\what s \what s_2$ is the block of level $m-1$, then {\sc redenote} with input $t_1 \what s_1 \what s$ and $m$  will give us $d^{t_2}$ restricted to $s_1 \what s$.

\end{itemize}
\end{itemize}
\end{itemize}

}\end{definition}
%
\begin{comment}{\em It is c), ii) that captures the essence of blocking, a block represents the local delay of deciding what the denotation one level up will be like, and will make it possible to simulate the evaluation of a procedure as a nested application of $\ci$ .

}\end{comment}
%
\begin{definition}{\em An \emph{Inductive Procedure} is a tame $\Pi^1_1$-procedure with a tame blocking.}\end{definition}
\begin{theorem}\label{Theorem.5.9} A functional $\Phi$ of type 3 is definable from an inductive procedure if and only if $\Phi$ is computable in $\ci$.
\end{theorem}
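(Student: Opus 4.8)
The plan is to prove the two implications separately, each building on one of the two groups of results already in place. The implication from $\ci$-computability to definability from an inductive procedure is essentially a decoration of the constructions of Section \ref{4.}: by Lemmas \ref{Lemma4.2} and \ref{Lemma.4.4}, together with Lemma \ref{Lemma.5.2} and the closure of tame d-procedures under Kleene computability, every total $\Phi$ computable in $\ci$ is already definable from a tame $\Pi^1_1$ d-procedure $\Omega \in {\bf W}(\ci)$. Thus the only genuinely new work for this direction is to equip such an $\Omega$ with a tame blocking.

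I would build the blocking by recursion on the scheme-structure used to generate $\Omega$ in the proof of Lemma \ref{Lemma.4.4}, in parallel with the calculations of Definition \ref{Def.3.3}. For the base procedure for $\ci$ (the construction in Lemma \ref{Lemma4.2}) each $\omega$-length segment that evaluates $G$ on the functions $n \what f_\beta$ becomes one block: while we are inside such a block we do not yet know whether $f_\beta$ is the final fixed point, so the denotation one level up is undecided, and this is exactly the delay to be recorded by \textsc{redenote}. For composition and for application of the type-$2$ argument $F$ no new delay is introduced, since the prefixes $\langle 0,d\rangle$, $\langle 1,d\rangle$, $\langle c+1,d\rangle$ of Lemma \ref{Lemma.4.4} are determined immediately, so there we only concatenate the inherited blockings. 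For application of $\Phi_i$ we import the blocking of $\Omega_i$ as blocks one level higher and splice, between consecutive queries $g_\beta$ of $\Omega_i$, the blockings of the subcomputations $\{e_1\}(\vec\Phi,F,g_\beta,\vec f,\vec a)$; this is the step that increases nesting depth, but only by one at each $\Phi_i$-call, so every inclusion-chain of blocks stays finite. Throughout, \textsc{block}, \textsc{denote} and \textsc{redenote} are assembled computably in $^2E$ from the corresponding functions of $\Omega_i$ and from \textsc{next}$_\Omega$, exactly in the style of the tameness arguments.

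For the converse I would generalise Lemma \ref{obs.2}, which already handles the delay-free case. Given an inductive procedure $\Omega$, I would use the recursion theorem for $\ci$ to define an evaluator that runs a single block as one non-monotone induction: it uses \textsc{next}$_\Omega$ (and $\su$, hence $^2E$, both computable in $\ci$) to read off the next query $f$ and the answer $F(f)$, uses \textsc{denote} to attach the block-local denotation, and closes the induction when \textsc{block}$_\Omega$ signals that the block ends. Whenever \textsc{block}$_\Omega$ signals the start of a sub-block, the evaluator calls itself through a fresh application of $\ci$ (scheme S8.3); on return it rewrites the temporary denotations of that sub-block to their true values by \textsc{redenote}. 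The outermost call then produces a code for the full calculation in $\Omega$ matching $F$, from which \textsc{next}$_\Omega$ reads off the value $\Omega(F)$.

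The main obstacle is this second, simulating, direction, and within it two points need care. First, one must check that the requirement in the blocking definition that every $\subseteq$-chain of blocks be finite is precisely what guarantees that the nested $\ci$-applications bottom out, so that the Kleene computation tree is well founded for every total $F$ even though its depth is not bounded uniformly in $F$; this is why the recursion theorem, rather than a fixed finite iteration of $\ci$, is essential. Second, the denotation bookkeeping --- showing that the temporary, block-local denotations produced by \textsc{denote} are rewritten coherently by \textsc{redenote} into the unique global denotations, so that the simulated calculation is genuinely isomorphic to the intended one --- is the delicate combinatorial core, and it is exactly the tameness condition governing \textsc{denote} and \textsc{redenote} that keeps this rewriting computable in $^2E$ and hence harmless inside the induction. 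The remaining verifications, that the two constructions invert one another at the level of the defined functionals, are then routine inductions on computation length in the style of Lemmas \ref{Lemma.4.4} and \ref{obs.2}.
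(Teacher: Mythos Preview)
Your proposal is correct and follows essentially the same approach as the paper's own proof: for the direction from inductive procedures to $\ci$-computability you generalise Lemma \ref{obs.2} via the recursion theorem to handle nested blocks, and for the converse you equip the d-procedures of ${\bf W}(\ci)$ with a tame blocking by recursion on the scheme structure, with the only non-trivial case being application of $\Phi_i$, where the $\Omega_i$-blocks and the subcomputation blocks are nested together. The paper presents the two directions in the opposite order and is terser about the trivial cases, but the substance is the same.
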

\begin{proof}
We  show first that if $\Phi$ is definable from an inductive procedure, then $\Phi$ is computable from $\ci$. We use a nested version of the argument in Lemma \ref{obs.2}, using everywhere that the functions {\sc next}, {\sc block}, {\sc denote} and {\sc redenote}  are computable in $^2E$, and thus in $\ci$.
We use the recursion theorem for Kleene computations to make the following precise:
\smallskip

Given $F$, we construct the inductive definition $\Gamma_F$ as in Observation \ref{obs.2}, using the  denotation as it comes, until we hit the beginning of a block $s$. Then we start the execution of a sub-procedure simulating the evaluation of $F$ along $s$ as an inductive definition $\Gamma_F^s$ in the same way. This sub-procedure will come to an end when the full evaluation along $s$ is simulated. At this stage we can describe the next step for $\Gamma_F$: From the output of $\Gamma_F^s$ , $^2E$ and the assumption on blocks, we can compute the correct denotations along the string up to the end of $s$, and $\Gamma_F$ just ads the full simulation of the evaluation of $F$ in $\Omega$ using this denotation.
\newline
If $s$ has sub-blocks, then $\Gamma_F^s$ will have sub-procedures in a similar way, this is why we need the recursion theorem to formally describe this procedure.

\medskip

\noindent In order to prove the other direction we elaborate on the proofs that the class of functionals satisfying our requirements is closed under Kleene computability, and assume that $\Phi_1 , \ldots , \Phi_n$ now are defined from inductive procedures.
\newline
The only case we need to consider is that of application of $\Phi_i$,

\[\{e\}(\vec \Phi , F , \vec f , \vec a) = \Phi_i(\lambda g.\{e_1\}(\vec \Phi , F , g , \vec f , \vec a)).\]
Let $G(g) = \{e_1\}(\vec \Phi , F , g , \vec f , \vec a)$. Then there is a calculation $\{(g_\beta , b_\beta)\}_{\beta < \alpha}$ in  $\Omega_i$ matching $G$. For each $\beta < \alpha$, the pair $(g_\beta,b_\beta)$ will be replaced by a substring of the composed calculation as follows: $(g_\beta , \ast)$ will just be preserved as it is, while $(g_\beta , b_\beta)$ is replaced by a string starting with $(g_\beta , \ast)$ and continued with the calculation of $F(g_\beta)$. 
\smallskip

When we defined the denotations for these composed calculations, we gave them directly from the denotations in the $\Omega_i$-calculation and from the denotations in the calculations of $G(g_\beta)$ without adding any further delay. Thus we can inherit the blocking structure of the $\Omega_i$-calculation, and whenever $(g_\beta,a_\beta)$ is in one of these blocks before we compose all the substrings, we let all blocks in $t$, where  $(g_\beta,\ast) \what t$ is inserted for $(g_\beta , a_\beta)$ and $t$ is the calculation of $G(g_\beta)$, be new blocks of a higher level.

\medskip

When we constructed the d-procedure in this case, we gave the rules for transforming the denotations in this simulating block to denotations of the corresponding items in the full calculation, and this clearly can be relativised to the blocks in $\Omega_i$. 
\smallskip

In order to tie the whole thing up showing that the definability and computability requirements of what we construct are satisfied, we need to use the recursion theorem for $^2E$, induction on the ordinal lengths of $\vec \Phi,F$-computations and induction/recursion on the level of blocks in a string. The details are tedious, but simple. \end{proof}
\subsection{The $\ci$-computable functions revisited}\label{computable}
In this section we will consider pure computations \[\{e\}(\ci , \vec a),\] without function and functional parameters.
\smallskip

In our definition of a procedure, we used the parameter $F$ to give values to queries, but at certain points we also inserted  elements of the LOG, functions appearing as arguments in sub-computations but not necessarily as arguments in the main computable function or functional we design the procedure for.
\smallskip

When transforming a computation in $\ci$ without functional arguments to a procedure this LOG will now be the backbone of the calculations. Since there will be no genuine queries anymore, we can even drop the $\ast$ for marking element-hood in the LOG. Thus a \emph{pure string} will be a triple $(D, \prec , \{g_d\}_{d \in D})$ where $(D,\prec)$ is a well ordering and each $g_d \in \N^\N$. We will consider such strings that are $\Pi^1_1$-singletons, where  the set of other strings isomorphic to initial segments of the given one is $\Pi^1_1$, where we have a {\sc next}-function computable in $^2E$ and where we have functions {\sc block}, {\sc denote} and {\sc redenote} as before, making this one-point set an inductive procedure. We call this a \emph{pure inductive  calculation}, and these pure inductive calculations will reflect  $\ci$-computations with integer inputs.

\begin{remark}{\em When transforming a computation $\{e\}(\ci, \vec a) = b$ to a pure inductive calculation we first of all linearised the computation. Then we hid some of the indexing in the function {\sc next}, and also in how we designed the denotations, we actually ``hid" all intermediate ``Kleene-calculations" that do not involve the scheme S8. However, for transfinite computations, this \emph{hiding} will not significantly reduce the length of a computation. On the other hand, when we translate a pure inductive calculation to a Kleene-computation, we may add to the length of the computation, partly because it takes time to compute {\sc next}, {\sc block}, {\sc denote} and {\sc redenote} whenever needed, and partly because we have to add time to the time-span of an induction in order to verify that the induction comes to a halt when it does.

}\end{remark}

Recall the definition of $\pi$ as the first ordinal with no code computable in $\ci$. We have the following characterisation.
\begin{lemma} $\pi$ is the supremum $\pi^*$ of the ordinal lengths  of the pure inductive calculations. \end{lemma}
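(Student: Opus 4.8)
The plan is to establish the two inequalities $\pi \leq \pi^*$ and $\pi^* \leq \pi$ separately, where $\pi^*$ denotes the supremum of the ordinal lengths of the pure inductive calculations. The point of the preceding development is that pure inductive calculations are, up to the translations already constructed, exactly the transcripts of Kleene computations $\{e\}(\ci,\vec a)$, so the lemma amounts to saying that the length of computations and the closure ordinal $\pi$ of the companion coincide. I would rely heavily on the two translation constructions already sketched in the section: the passage from a computation $\{e\}(\ci,\vec a)=b$ to a pure inductive calculation, and the reverse passage from a pure inductive calculation back to a Kleene computation.

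First I would prove $\pi^* \leq \pi$. Given a pure inductive calculation of length $\lambda$, translate it (as indicated in the preceding remark) into a Kleene computation $\{e\}(\ci,\vec a)=b$. By the correctness of that translation the computation terminates, and its norm is at least $\lambda$ (the hiding performed by {\sc next}, {\sc block}, {\sc denote}, {\sc redenote} does not decrease the ordinal length, as the remark stresses). A terminating computation $\{e\}(\ci,\vec a)$ has a countable norm, and moreover from such a computation one extracts a prewellordering — exactly as in the proof of Lemma \ref{lemma.compare} — whose rank is the norm and whose code is computable in $\ci$. Hence every ordinal arising as the length of a pure inductive calculation has a code computable in $\ci$, so every such length is strictly below $\pi$, giving $\pi^* \leq \pi$.

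Conversely, for $\pi \leq \pi^*$ I would argue that every ordinal $\alpha < \pi$ is bounded by some pure inductive calculation length, so it suffices to show that $\pi^*$ is not a code-computable ordinal, i.e.\ that for each $\alpha < \pi$ there is a pure inductive calculation of length $> \alpha$. Since $\alpha < \pi$, there is a code $f \in \WO$ for $\alpha$ computable in $\ci$; unwinding the Kleene computation that produces this code and translating it into a pure inductive calculation (again via the explicit construction of ${\bf W}(\ci)$ together with Theorem \ref{Theorem.5.9} applied to the relevant $\ci$-computable functional) yields a calculation whose length dominates $\alpha$. Because $\alpha < \pi$ was arbitrary, $\pi \leq \pi^*$. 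Combining the two inequalities gives $\pi = \pi^*$.

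\textbf{The main obstacle} I expect is the bookkeeping in the direction $\pi \leq \pi^*$: one must be careful that translating a code-producing $\ci$-computation into a pure inductive calculation genuinely produces length at least $\alpha$ rather than collapsing ordinals through the hiding mechanism, and that the norm of the reconstructed Kleene computation faithfully tracks the length. The cleanest way to handle this is to invoke the explicit norm bounds built into the definition of $||\cdot||$ in Section \ref{2.3}, matching scheme S8.3 against clause iv) of the norm definition, so that each induction step in a pure inductive calculation contributes honestly to the Kleene norm; the remaining correctness is then the routine double induction (on computation length and on block level) already flagged as ``tedious but simple'' in the proof of Theorem \ref{Theorem.5.9}.
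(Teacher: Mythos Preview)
Your direction $\pi^* \leq \pi$ is essentially fine, if slightly roundabout compared with the paper (which computes a code for the length of a pure inductive calculation directly from $\ci$ via the block structure, rather than first translating back into a Kleene computation and then invoking the norm).

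The genuine problem is in your direction $\pi \leq \pi^*$. You propose to take an $\ci$-computable code $f\in\WO$ for $\alpha$, unwind the Kleene computation that produces $f$, and argue that the resulting pure inductive calculation has length $\geq\alpha$. This fails: producing a code for an ordinal $\alpha$ does not require a computation of norm $\geq\alpha$. For any recursive ordinal $\alpha$ there is a Turing-computable code for $\alpha$, and the Kleene computation outputting each value of that code uses only S1--S7 and has finite norm; its associated pure inductive calculation is correspondingly short. The obstacle you flag is real, but your proposed fix (tracking S8.3 against clause iv) of the norm definition) addresses the wrong inequality: it controls how faithfully the translation preserves length, not whether the code-producing computation was long to begin with.

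The paper's argument avoids this entirely. Given $\alpha<\pi$, it takes an $\ci$-computable well-ordering $(X,\prec)$ of type $\alpha$ and then \emph{constructs a new} pure inductive calculation that simulates the transfinite process of building up $X$ one element at a time, using the pure inductive calculations for ``$x\in X$?'' and ``$x\prec y$?'' as subroutines at each step. This grand calculation has, by design, at least $\alpha$ stages. The missing idea in your proposal is exactly this: you must manufacture a calculation that \emph{runs along} $\alpha$, not merely one that \emph{outputs a name for} $\alpha$.
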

\begin{proof} First, we will prove that $\pi \leq \pi^*$. Let $\alpha < \pi$, and let $(X , \prec)$ be $\ci$-computable and a well ordering of ordinal length $\alpha$. We prove this direction by constructing a pure inductive calculation with at least $\alpha$ many steps. We will use the pure inductive calculations deciding $x \in X$ and $x \prec y$ as building blocks, and with the help of those  we simulate, in the form of a grand pure inductive calculation, the induction building up $X$ one point at each step, a process that needs exactly $\alpha$ many steps.
\smallskip

Then we prove that $\pi^* \leq \pi$. Let $t$ be a pure inductive calculation. Using the same strategy we used when showing that a functional definable from an inductive procedure is computable in $\ci$, a strategy involving the recursion theorem for $\ci$, we can show that there is a nested computation relative to $\ci$ that computes a code for the ordinal length of $t$, whenever we enter a block, we compute the length of that block as a subcomputation, and then at the end of the block, ads a copy of this code to the well ordering we are building up. Actually, it will be the denotations with their ordering we compute, and in a block, the local denotations within that block.  \end{proof}
\begin{theorem}\label{lemma.car.2} Let $\alpha < \pi$. If $\alpha$ is the rank of a pure inductive calculation $t $, then $\alpha$ is $\Pi^1_1$-characterisable. \end{theorem}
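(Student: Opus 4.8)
The plan is to characterise the codes of $\alpha$ by \emph{reconstructing} the calculation $t$ from a candidate code $f$, exploiting that all the data of a pure inductive calculation is computable in $^2E$, and thereby to express ``$f$ codes $\alpha$'' as the convergence of a single $^2E$-computation. It is worth first recording why the obvious route fails, since this pinpoints the difficulty. Writing $P(s)$ for the $\Pi^1_1$ predicate with $P(s)\iff s=t$, one is tempted to set, for $f\in\WO$,
\[ f\text{ codes }\alpha \iff \exists s\,[\,P(s)\wedge |f|=\mathrm{ot}(D_s,\prec_s)\,]; \]
and although ``$|f|=\mathrm{ot}(D_s,\prec_s)$'' is uniformly $\Pi^1_1$ in $(f,s)$ on well orders (it is the conjunction of the two $\Pi^1_1$ assertions that neither order maps order-preservingly onto a proper initial segment of the other), the leading second-order quantifier pushes the whole condition to $\Sigma^1_2$, and the dual $\forall s$-form fares no better, landing in $\Pi^1_2$. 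Thus the entire point is to eliminate any genuine second-order quantification over $t$. This is the main obstacle, and it is resolved by using that $t$ is a \emph{pure} inductive calculation: there are no genuine oracle queries, so {\sc next}$_\Omega$ is fully deterministic, and iterating it from the empty string reconstructs $t$ one entry at a time, exposing its length $\alpha$ exactly at the stage where {\sc next}$_\Omega$ returns a value.

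Concretely, I would define a verdict function $V$ with values in $\{-1,0,1\}$ as follows. Given $f\in\WO$, perform transfinite recursion along $(D_f,<_f)$: at the element of $<_f$-rank $\beta$, having assembled the string consisting of the first $\beta$ entries of $t$, apply {\sc next}$_\Omega$; if it returns the next query, record it as entry $\beta$ and continue, whereas if it returns a value, halt and output $1$ (the calculation closed at length $\beta<|f|$, so $|f|>\alpha$). If the recursion exhausts $f$ with every step returning a query, apply {\sc next}$_\Omega$ once more to the length-$|f|$ string and output $0$ or $-1$ according as it returns a value or a query (so $|f|=\alpha$ or $|f|<\alpha$ respectively). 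Each step is computable in $^2E$ because {\sc next}$_\Omega$ is, and the strings fed to {\sc next}$_\Omega$ are always genuine initial segments of $t$, hence lie in $\Omega_{\rm pre}$ where {\sc next}$_\Omega$ is defined, since the reconstruction started from the empty string and followed the unique continuation at each stage. As every $f\in\WO$ satisfies $|f|<\omega_1^{\rm CK}(f)$, effective transfinite recursion of a $^2E$-computable step along $f$ stays computable in $^2E$; thus $V$ is total on $\WO$ and computable in $^2E$, and by construction $V(f)=0$ precisely when $|f|=\alpha$.

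Finally I would read off the complexity. The graph of a function computable in $^2E$ is $\Pi^1_1$ — convergence of an $^2E$-computation is a $\Pi^1_1$ condition — so
\[ \{\, f : f\text{ codes }\alpha\,\}=\{\, f : f\in\WO \wedge V(f)=0\,\} \]
is the intersection of the $\Pi^1_1$ set $\WO$ with a $\Pi^1_1$ condition, hence $\Pi^1_1$; no real parameters enter, since {\sc next}$_\Omega$ (and the remaining data of the inductive procedure for $t$) is genuinely computable in $^2E$. This exhibits $\alpha$ as $\Pi^1_1$-characterisable. I would close by remarking that the hypothesis $\alpha<\pi$ is in fact automatic: by the preceding lemma the rank of any pure inductive calculation lies below $\pi^\ast=\pi$, so the displayed characterisation applies to every ordinal arising as such a rank.
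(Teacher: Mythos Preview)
Your proposal is correct and follows essentially the same approach as the paper: rebuild $t$ step by step along the well-ordering coded by $f$ using {\sc next}$_\Omega$, and accept $f$ exactly when the reconstruction reaches the end of $t$ precisely at the final step; since termination of $^2E$-computations is $\Pi^1_1$, the resulting set of codes is $\Pi^1_1$. The paper's proof additionally mentions tracking the blocking structure along the way, but your implicit observation that {\sc next}$_\Omega$ alone suffices---because elements of $\Omega_{\rm pre}$ need only be order-isomorphic to initial segments of $t$, so the domain of $f$ itself can serve as the index set during the reconstruction---is valid and arguably streamlines the argument.
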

\begin{proof} If $f \in \WO$ has rank $\beta$, we can decide if $\beta = \alpha$ as follows: By recursion on the wellordering coded by $f$ we can use {\sc next}$_t$ and the $^2E$-algorithm organising the blockings to compute the corresponding stages in $t$ with full information about where in the blocking structure we are at each step. If this simulation terminates exactly at the end of $t$, we accept $f$, otherwise we refute it. The set of $f$s accepted will be $\Pi^1_1$.
\end{proof}

\section{Tracing computations in $\ci$}\label{chapt.7}
\subsection{Partial procedures}
In order to make constructions of procedures smoother, we have not insisted on the natural requirement that for each $f$ and calculation, there is at most one occurrence of the query $F(f) = ?$. However, when it does appear several times, the calculation will be based on the same answer everywhere. When we refer to a query $F(f) = ?$ we will always mean the first occurrence.
When $\{e\}(\ci,F,\vec f , \vec a)\!\!\downarrow$ for all $F$, it is clear that the associated procedure will lead to terminating calculations for all $F$.  This means that when $f$ appears in a query  $F(f) = \; ?$  there will be an extension into a calculation   for all $a \in \N$. Conversely, we can prove that for every  calculation $(\{(f_\beta,a_\beta)\}_{\beta < \alpha},c)$ constructed in the procedure for $\lambda F.\{e\}(\ci , F , \vec f , \vec a)$, if $F$ matches this string, then $c$ will be the value of this computation.
\smallskip

We will now discuss what happens if we consider computations $\{e_0\}(\ci,F)$  that do not terminate for all inputs $F$. In this case, we can still define a set $\Omega_{\rm pre}$ of  strings with denotations, in these strings we may enter blocks and sub-blocks, and they behave as required for inductive procedures, since we have established these properties for each $e$, $F$ , $\vec f$ and $\vec a$ such that  $\{e\}(\ci , F , \vec f , \vec a)\!\!\downarrow$. 
\smallskip

We need to consider parameters $\vec f$ and $\vec a$, since such parameters occur in subcomputations, so we reason within this generality.
\smallskip

If we consider the construction of the inductive procedure  more carefully, we can observe what we construct in the case of non-termination more closely, again by cases according to what the index $e$ is like:
\smallskip

If $e$ is an index for an initial computation, we constructed a trivial procedure yielding the correct output without any queries made.
\smallskip

In the case of composition, we first constructed the procedure for the inner component, and for the calculations in this procedure (the strings that give us an answer), we concatenated with calculations from the procedure of the corresponding outer component. In the case the composed computation does not terminate, we do construct a string modelling  the leftmost non-terminating subcomputation.
\smallskip

 In the cases where there will be exactly one subcomputation, what we do is using the string of that one.
 \smallskip
 
 In the cases of application of $F$ or application of $\ci$, we are doing exactly as above, in case of non-termination we build up the string until we reach the leftmost non-terminating subcomputation, and ending the string in $\Omega$ with a copy of a string for this leftmost one.
 \smallskip
 
 In the case that $e$ is not an index at all, the procedure will be trivial, but with non-termination as the conclusion.
\bigskip

If $\{e\}(\ci , F , \vec f , \vec a)\!\!\uparrow$, there will be a \emph{leftmost} Moschovakis witness, a descending sequence of unsettled computations such that every computation to the left will terminate, and it will be exactly the string corresponding to evaluate $\{e\}(\ci , F , \vec f , \vec a)$ along this descending sequence of subcomputations that will be constructed in this case. If $\Omega$ is constructed like this, we will simply have some strings where the conclusion must be $\bot$ instead a proper value. However, since being a Moschovakis witness is semi-decidable, this was the original point with them, we see that $\Omega_{\rm pre}$ will still be $\Pi^1_1$. We will also have functions {\sc next}, {\sc block}, {\sc denote} and {\sc redenote} computable in $^2E$. What may be lacking is that blocks may be entered without ever being left, that we may have an infinite descending sequence of  blocks (that will then not have end points) and that we will not have a $^2E$-computable way to define the denotations for the blocks unless they have an end. So, the procedure will not be an inductive procedure. This is as it has to be, since we can define the characteristic function of a set of functions of type 2 that is complete semi-computable in $\ci$ using a procedure like this, replace the value $\bot$ with 0 and the value $a \in \N$ with 1 as the values of calculations.
\newline
These considerations contain the proof of
\begin{theorem}\label{Theorem.5.10}There is a total procedure $\Omega$ such that $\Omega_{\rm pre}$ is $\Pi^1_1$, and such that there is a partial function {\sc next}$_\Omega$ that is computable in $^2E$, but where the functional defined by $\Omega$ is not computable in $\ci$.
\end{theorem}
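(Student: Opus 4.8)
The plan is to realise, by a procedure of exactly the kind analysed in the discussion above, the characteristic function of a set of type-2 functionals that is $\ci$-semicomputable but not $\ci$-decidable, and then to read off the three asserted properties from that same discussion while observing that the obstruction to a tame blocking is precisely what keeps the functional off the $\ci$-computable side of Theorem \ref{Theorem.5.9}.

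First I would fix an index $e_0$ such that $W = \{F : \{e_0\}(\ci , F)\downarrow\}$ is $\ci$-semicomputable but its complement is not; such a $W$ exists by the standard diagonal (halting-problem) argument available in any Kleene computability theory, using the universal algorithm axiomatised by S9 together with the recursion theorem, which gives a complete $\ci$-semicomputable set. The characteristic function $\Phi$ of $W$ is total, and it is \emph{not} computable in $\ci$: were it so, $W$ would be $\ci$-decidable, forcing its complement to be $\ci$-semicomputable as well, contrary to the choice of $W$.

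Next I would take $\Omega$ to be the procedure built for $\lambda F.\{e_0\}(\ci , F)$ by the construction carried out in the discussion above, and assign values by the recipe already indicated there: a terminating calculation (one matching some $F \in W$) receives the value $1$, while a calculation following the leftmost Moschovakis witness (matching an $F \notin W$, for which $\{e_0\}(\ci , F)\uparrow$) receives the value $0$ in place of $\bot$. Since for each $F$ exactly one of the two alternatives occurs, $\Omega$ is a total procedure and the functional it defines is exactly $\Phi$. The two remaining requirements are those already secured in the discussion: $\Omega_{\rm pre}$ is $\Pi^1_1$, because being (isomorphic to an initial segment of) a correct partial evaluation — including the initial segments of a leftmost Moschovakis witness — is semi-decidable, which is the defining feature of such witnesses; and {\sc next}$_\Omega$ is computable in $^2E$, because the next query along any such partial evaluation is traced by a $^2E$-computation, just as in the terminating case.

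The main obstacle, and the reason this example does not contradict Theorem \ref{Theorem.5.9}, is that $\Omega$ admits no tame blocking: along a leftmost Moschovakis witness a block may be entered and never left, producing an infinite descending chain of blocks without end points, so there is no $^2E$-computable way to fix the denotations of those blocks through {\sc denote} and {\sc redenote}. Hence $\Omega$ fails to be an inductive procedure, exactly as Theorem \ref{Theorem.5.9} demands for a functional that is not $\ci$-computable. The only genuine verification left is to confirm that assigning the value $0$ at the divergent branches preserves both the $\Pi^1_1$-ness of $\Omega_{\rm pre}$ and the $^2E$-computability of {\sc next}$_\Omega$; both hold branch-by-branch by the discussion above, since neither $\Omega_{\rm pre}$ nor {\sc next}$_\Omega$ refers to the value attached to a completed calculation.
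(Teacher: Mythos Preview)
Your proposal follows the paper's approach essentially exactly: fix a complete $\ci$-semicomputable class of type-2 functionals, build the procedure from the preceding discussion of partial procedures for $\lambda F.\{e_0\}(\ci,F)$, and reassign values $1$/$0$ in place of integer/$\bot$.

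There is, however, one point you dismiss too quickly which the paper treats as the residual content of the proof. You write that neither $\Omega_{\rm pre}$ nor {\sc next}$_\Omega$ ``refers to the value attached to a completed calculation'', but this is not the case for {\sc next}$_\Omega$: by definition, {\sc next}$_\Omega(t)$ must output the \emph{value} whenever $t$ is (isomorphic to) a full calculation, and the next query otherwise. Thus {\sc next}$_\Omega$ must be able, using only $^2E$, to recognise when a given $t \in \Omega_{\rm pre}$ is \emph{maximal} --- in particular, to recognise when $t$ is the full string for the leftmost Moschovakis witness rather than a proper initial segment of it --- and then output $0$. Your sentence ``the next query along any such partial evaluation is traced by a $^2E$-computation'' covers only the non-maximal case; the preceding discussion likewise establishes {\sc next} only along proper initial segments.

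The paper closes this gap by observing that a string $t$ represents a full non-terminating computation exactly when either the current point in the computation tree carries an index that is not one of S1--S9, or the nesting depth of blocks along $t$ is infinite; the latter is decidable in $^2E$ by computing the $\limsup$ of the block levels of the items of $t$ via the {\sc block} function. With this maximality test in hand, {\sc next}$_\Omega$ is total on $\Omega_{\rm pre}$ and computable in $^2E$. This is a small gap in your write-up, but it is the one substantive verification the paper singles out, and it should not be waved away.
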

\begin{proof} The only property left is that we must be able to decide, using a $^2E$-algorithm, if a string $t$ in $\Omega_{\rm pre}$ is maximal or not, and in case it is maximal, if it has a value or not. By recursion on the indexing of $t$ we can follow the points in the computation tree corresponding to the points in $t$.  If this point in the computation tree has an index that is none of the indices of S1 - S9, we can conclude that there is no value. If the blocking depth is infinite, we can conclude that the string represents non-termination. This can be checked by $^2E$, using {\sc block} and calculating the $\limsup$ of the block level of the items of the string. In all other cases, there will be a next query or there will be a value given to us by the original {\sc next}-function  \end{proof}

We also have
\begin{theorem} There is a non-terminating computation $\{e\}(\ci , \vec a)$ such that the length of the string simulating the leftmost Moschovakis witness will have length at least $\pi (= \omega_1^{\ci})$. \end{theorem}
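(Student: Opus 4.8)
The plan is to exhibit divergence coming from a single application of $\ci$ to a $\ci$-computable inductive definition $\Gamma$ that \emph{refuses to close}: one whose induction is defined and strictly increasing through every stage $\beta < \pi$ but whose next step at stage $\pi$ diverges. The point is that, by the construction of the procedure for $\ci$ (the string matching a call $\ci(F_G)$ queries $G$ on $a\what f_\beta$ for all $a<\omega$ and all $\beta$, in order type $\omega(\alpha+1)$), an induction that genuinely passes through all $\beta<\pi$ contributes a sub-string of length $\omega\cdot\pi$. Since $\pi$ is admissible, indeed reflecting, it is closed under ordinal arithmetic, so $\omega\cdot\pi=\pi$. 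Thus once I produce such a $\Gamma$, the leftmost Moschovakis witness for $\{e\}(\ci)=\ci(\Gamma)(0)$ will already have run through $\pi$ stages before reaching its first divergent subcomputation, giving length at least $\pi$.

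The concrete $\Gamma$ is the divergent analogue of the inductive definition used in the proof of Theorem~\ref{thm.improved}, but with the halting clause deleted. Working with sets $A$ coding partially enumerated families $\{f_d : d\in D\}$ of wellorder-codes, I would let $\Gamma(A)=A$ unless $A$ codes such a family of codes in $\WO$, in which case I use $^2E$ to form a code for the supremum $\gamma$ of the coded ordinals and then apply Gandy selection to search for an index $e'$ with $\{e'\}(\ci)\!\!\downarrow$ a code for some ordinal $\gamma'>\gamma$, adding it to $A$. The self-reference (searching among $\ci$-indices) is handled by the recursion theorem for $\ci$, and $\Gamma$ is $\ci$-computable because $\su$, $^2E$ and Gandy selection all are. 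For $\beta<\pi$ the set $f_\beta$ is a $\ci$-computable member of $\LL_\pi$, so $\gamma_\beta<\pi$; since $\pi$ is a limit of $\ci$-computable ordinals, there is a $\ci$-computable code for $\gamma_\beta+1<\pi$, so the Gandy search succeeds and $\Gamma(f_\beta)$ terminates with a strictly larger sup. Hence the induction never closes below $\pi$. At stage $\pi$ one has $f_\pi=\sup_{\beta<\pi}f_\beta$, whose coded ordinals are cofinal in $\pi$ (as $\gamma_\beta\geq\beta$), so $\gamma=\pi$; but no ordinal $\geq\pi$ has a $\ci$-computable code, so the Gandy search for a code of an ordinal $>\pi$ diverges and $\Gamma(f_\pi)$ diverges.

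It follows that $\ci(\Gamma)$ is undefined: the induction is strictly increasing (hence never reaches a fixed point) for all $\beta<\pi$, and $f^\Gamma_{\pi+1}$ cannot be formed because $\Gamma(f^\Gamma_\pi)$ diverges. This is also forced abstractly by Lemma~\ref{lemma.compare}: a $\ci$-computable induction that \emph{were} defined would close at an ordinal $<\pi$, whereas ours is strictly increasing below $\pi$, so it cannot be defined. Now reading off the string that $\Omega$ builds for the non-terminating computation $\{e\}(\ci)$: by the description of the non-termination case for application of $\ci$, the string records all completed stages $\beta<\pi$ and then ends with a copy of the string for the leftmost non-terminating subcomputation, namely the divergent evaluation of $\Gamma(f_\pi)$. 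The completed part alone has length $\omega\cdot\pi=\pi$, so the leftmost Moschovakis witness has length at least $\pi$. I expect the main obstacle to be the bookkeeping confirming that $\Gamma$ is \emph{sufficiently total} exactly on the stages $\beta<\pi$ and that the \emph{first} divergent subcomputation in the linearised evaluation sits precisely at stage $\pi$ (so that the $\omega\cdot\pi$ initial queries are all genuinely present in the witness); this rests on admissibility of $\LL_\pi$ to keep the running suprema $\gamma_\lambda$ below $\pi$ at limit stages $\lambda<\pi$, together with the fact that $\pi$ is the least ordinal with no $\ci$-computable code to force the single point of divergence.
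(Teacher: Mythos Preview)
Your argument is correct, but it takes a genuinely different route from the paper's. The paper gives a two-line indirect proof: if every non-terminating $\{e\}(\ci,\vec a)$ had a leftmost Moschovakis witness of length $<\pi$, then non-termination would become $\ci$-semidecidable (one searches for a code of such a witness), and together with the obvious semidecidability of termination, Gandy selection would make termination $\ci$-decidable, yielding the usual contradiction by diagonalisation. No specific computation is produced.

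You instead construct an explicit witness, essentially re-running the inductive definition from the proof of Theorem~\ref{thm.improved} with the halting clause removed, so that the induction climbs strictly through every $\beta<\pi$ and the Gandy search first diverges at stage $\pi$. This is a perfectly good argument, and it has the advantage of being constructive and of making the length $\omega\cdot\pi=\pi$ visible directly from the shape of the $\ci$-procedure. The two points you flag as obstacles are indeed the places to be careful, and both are handled by facts already in the paper: that each $f_\beta$ for $\beta<\pi$ is $\ci$-computable (so the supremum $\gamma_\beta$ has an $\ci$-computable code and hence $\gamma_\beta<\pi$) follows from the recursion-theorem argument used in Theorem~\ref{thm.improved}, and that $\pi$ is closed under $\alpha\mapsto\omega\cdot\alpha$ follows from admissibility. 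One small refinement: depending on how you code $\Gamma$, some values $\Gamma(f_\pi)(n)$ with $n\in f_\pi$ might terminate trivially before the divergent search is invoked; but this is harmless, since the leftmost divergent subcomputation still sits at stage $\pi$, after the full $\omega\cdot\pi$ initial block of terminating queries.

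What the paper's approach buys is brevity and a stronger qualitative statement (the result holds for structural reasons, not because of a clever example); what yours buys is an explicit computation exhibiting the phenomenon, which is more informative about \emph{how} the witness reaches length $\pi$.
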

\begin{proof}
If this were not the case we can use Gandy selection for $\ci$-computations and make $\ci$-semidecidable equivalent to $\ci$-decidable, obtaining the standard contradiction by diagonalisation. 
\end{proof}

\begin{remark}{\em  Moschovakis witnesses were introduced in \cite{yannis}, and they were significant  for the understanding of higher order computing relative to normal functionals and in set recursion. They are also introduced in \cite[Section 5.2.2]{LN}, and they were applied in the proof of \cite[Theorem 6.6]{P6}.}\end{remark}
\subsection{Functionals computable in  $\ci$}\label{7.2}
In a series of papers \cite{P1,P2,P3,P4,P5,P6,P7,P8}, written jointly with Sam Sanders , we investigate classes of functionals of type 3 that serve as realisers for classical theorems in analysis. There are unsettled question concerning the relative computability of the elements of these classes. In this section we will see that in the case that elements of these classes are computable in $\ci$, we can ``almost" compute the Suslin functional from them, and consequently, they will ``almost" be computationally equivalent to $\ci$ itself. We will make the \emph{ ``almost"} precise by replacing a functional $\Phi$ computable in $\ci$ with one that traces the history of the computation, not just gives the value.  We call this the \emph{honest version of $\Phi$}
\smallskip

There is an analogue with what we do in complexity theory where the complexity of a set is often measured by the resources required to decide membership in the set and not by what we can decide using small resources combined with the set as an oracle.   In a mathematically precise way we will see that if we compute realisers for some classical theorems of analysis from $\ci$, we need the full power of $\ci$ in doing so.
\begin{definition}{\em
Let $\Omega \in {\bf W}(\ci)$  be the inductive procedure constructed from $\Phi$ as computable in $\ci$.
We define the \emph{honest version} ${\mathcal H}(\Phi)$ as the functional (of mixed type) that sends a functional $F$ of type 2 to the fixed point of the inductive definition $\Gamma_F$ as constructed from $\Omega$ in the proof of Theorem \ref{Theorem.5.9}, i.e. as the \emph{history} of the evaluation of $\Phi(F)$ from $\ci$.

}\end{definition}

An \emph{open covering} of a set $X$ in a topological space ${\mathcal T}$ is normally defined as a set $\mathcal C$ of open sets in $\mathcal T$ whose union is a superset of $X$. However, if we make use of the concept of realisers, a \emph{realiser } of the open covering will be  a map sending $x \in X$ to an open set $O_x \in {\mathcal T}$ such that $x \in O_x$. When Borel \cite{B95} gave an attempt of a direct proof of the Heine-Borel theorem, he actually, without knowing the concept,  used this idea of a realiser; with free translation he expressed his assumption as follows:
\begin{itemize}\item[(*)] Assume that we have a way of attaching an open neighbourhood $O_x$ of $x$ to each $x \in [a,b]$.\end{itemize}
In the papers with Sanders, we have considered coverings and related concepts primarily over the \emph{Cantor space} $\Ca = \{0,1\}^\N$ and the \emph{Baire space} $\Ba = \N^\N$ given as functionals $F$ of type 2, where $F(f)$ \emph{defines} the neighbourhoods $\Ca_{\overline{f}(F(f))}$ and $\Ba_{\overline{f}(F(f))}$ of extensions of $\overline{f}(F(f))$,  depending on which space we consider $f$ to be an element of.

\medskip

\noindent We have been looking at  the following three classes:
\begin{definition}{\em \hspace*{10mm}
\begin{itemize}
\item[a)] A \emph{strong realiser }for the Heine-Borel theorem will be a functional $\Theta$ such that whenever $F:\Ca \rightarrow \N$, then $\Theta(F) = \{f_1 , \ldots , f_n\}$ such that \[\Ca \subseteq \Ca_{\overline{f_1}(F(f_1))} \cup \cdots \cup \Ca_{\overline{f_n}(F(f_n))}.\]
\item[b)] A \emph{weak realiser} for the Heine-Borel theorem will be a functional $\theta$ such that whenever $F:\Ca \rightarrow \N$ then $\theta(F) = \{s_1 , \ldots , s_n\}$ where each $s_i$ is a finite binary sequence, $\{\Ca_{s_1}, \ldots , \Ca_{s_n}\}$ is a covering of $\Ca$ and for each $i = 1 , \ldots , n$ there is an $f_i \in C$ such that $\overline{f_i}(F(f_i)) = s_i$.
\item[c)] A \emph{Pincherle realiser} will be a functional $M$ such that whenever $F:\Ca \rightarrow \N$, then $M(F) = N \in \N$ and $N$ satisfies:
\begin{itemize}
\item[(-)] If $G:\Ca \rightarrow \N$ satisfies that $G(g) \leq F(f)$ whenever $\overline{g}(F(f)) = \overline{f}(F(f))$ ($F$ is considered as a realiser for local boundedness) then $G$ is bounded by $N$ on $\Ca$.\end{itemize}
\end{itemize}}\end{definition}
The following lemma is trivial:
\begin{lemma} Every strong realiser for the Heine-Borel theorem computes a weak one, and every weak realiser for the Heine-Borel theorem computes a Pincherle realiser.\end{lemma}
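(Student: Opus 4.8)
The plan is to establish the two implications separately, each by an explicit construction that reads the required data directly off the given realiser; no appeal to $\ci$ or even to $^2E$ will be needed, since in both cases only finite manipulations are involved.

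First I would treat the passage from a strong realiser to a weak one. Given a strong realiser $\Theta$ and an input $F:\Ca\to\N$, I would apply $\Theta$ to obtain a finite set $\{f_1,\ldots,f_n\}$ with $\Ca\subseteq\Ca_{\overline{f_1}(F(f_1))}\cup\cdots\cup\Ca_{\overline{f_n}(F(f_n))}$, and then output the finite binary sequences $s_i:=\overline{f_i}(F(f_i))$. The covering requirement demanded of a weak realiser is then literally the covering property guaranteed by $\Theta$, and each $s_i$ is witnessed by $f_i$ exactly as required; so $\theta(F):=\{s_1,\ldots,s_n\}$ is a weak realiser, evidently computable from $\Theta$.

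For the passage from a weak realiser $\theta$ to a Pincherle realiser, the plan is to set $M(F):=\max\{|s_1|,\ldots,|s_n|\}$, where $\{s_1,\ldots,s_n\}=\theta(F)$ and $|s_i|$ denotes the length of $s_i$. The verification I would carry out runs as follows. By definition of a weak realiser each $s_i=\overline{f_i}(F(f_i))$ for some $f_i\in\Ca$, so $|s_i|=F(f_i)$. Now suppose $G:\Ca\to\N$ satisfies the local boundedness hypothesis, namely $G(g)\le F(f)$ whenever $\overline{g}(F(f))=\overline{f}(F(f))$, and fix any $g\in\Ca$. Since $\{\Ca_{s_1},\ldots,\Ca_{s_n}\}$ covers $\Ca$, we have $g\in\Ca_{s_i}$ for some $i$, i.e. $\overline{g}(|s_i|)=s_i=\overline{f_i}(F(f_i))$; using $|s_i|=F(f_i)$ this is precisely the hypothesis $\overline{g}(F(f_i))=\overline{f_i}(F(f_i))$, whence $G(g)\le F(f_i)=|s_i|\le M(F)$. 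As $g$ was arbitrary, $G$ is bounded by $M(F)$ on $\Ca$, so $M$ is a Pincherle realiser.

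The only step requiring any attention is the bookkeeping in the second implication: one must observe that the length of a covering sequence output by the weak realiser coincides with the value of $F$ at its witness, so that membership of an arbitrary $g$ in the corresponding basic neighbourhood is exactly the situation in which the local boundedness condition controls $G(g)$. Once this identification is made the bound is immediate, and both constructions confirm that the lemma is trivial as stated.
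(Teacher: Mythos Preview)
Your proof is correct and is precisely the kind of direct verification the paper has in mind: the paper declares the lemma trivial and leaves the proof to the reader, and your two explicit constructions (passing to the finite list of initial segments, then taking the maximum of their lengths) are the natural ones. The only point worth noting is that the witnesses $f_i$ for the weak realiser are merely asserted to exist rather than output, but since your definition of $M(F)$ uses only the lengths $|s_i|$ this causes no difficulty.
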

\noindent The proof is left for the reader.
\begin{lemma}\label{Lemma.5.14}
Let $M$ be a Pincherle realiser that is countably based. Let $F:\Ca \rightarrow \N$ be arbitrary, and let $X \subset \Ca$ be countable such that $M(G) = M(F)$ for all $G$ such that $F$ and $G$ are equal when restricted to $X$. Then $\{\Ca_{\overline{f}(F(f))} : f \in X\}$ is a covering of $\Ca$. \end{lemma}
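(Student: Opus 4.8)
The plan is to argue by contradiction, exploiting the defining property of $X$ together with the Pincherle boundedness property of $N := M(F)$. Suppose the family $\{\Ca_{\overline{f}(F(f))} : f \in X\}$ fails to cover $\Ca$, and fix a witness $h \in \Ca$ lying in none of these neighbourhoods; thus $\overline{h}(F(f)) \neq \overline{f}(F(f))$ for every $f \in X$. First I would record the trivial but essential observation that $h \notin X$: if $h$ were in $X$ it would extend $\overline{h}(F(h))$ and hence lie in $\Ca_{\overline{h}(F(h))}$, contrary to the choice of $h$.

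Next I would manufacture a competing realiser and a ``bump'' function that together violate the Pincherle property. Define $\tilde{F}:\Ca \to \N$ by $\tilde{F}(f) = F(f)$ for $f \in X$ and $\tilde{F}(f) = N+1$ for $f \notin X$. Since $\tilde{F}$ agrees with $F$ on $X$, the hypothesis on $X$ gives $M(\tilde{F}) = M(F) = N$, so the Pincherle property applies with $\tilde{F}$ as the local-boundedness realiser and the same bound $N$. Now let $G:\Ca \to \N$ be the function with $G(h) = N+1$ and $G(g) = 0$ for $g \neq h$. The claim, whose verification is the only substantive point, is that $G$ is $\tilde{F}$-locally bounded, i.e.\ $G(g) \leq \tilde{F}(f)$ whenever $\overline{g}(\tilde{F}(f)) = \overline{f}(\tilde{F}(f))$.

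The main obstacle, and the reason the construction is set up this way, is to ensure that no neighbourhood carrying a bound smaller than $N+1$ actually contains $h$. This splits into two cases. For $g \neq h$ the inequality is automatic since $G(g) = 0$. For $g = h$ one must check every $f$ whose neighbourhood contains $h$: if $f \in X$ then $\tilde{F}(f) = F(f)$ and the equality $\overline{h}(F(f)) = \overline{f}(F(f))$ would contradict the choice of $h$, so this subcase is vacuous; if $f \notin X$ then $\tilde{F}(f) = N+1 = G(h)$, so the required inequality holds with equality. Hence $G$ is $\tilde{F}$-locally bounded, and the Pincherle property forces $G(g) \leq N$ for all $g$, contradicting $G(h) = N+1$. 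Therefore no uncovered $h$ exists and the family covers $\Ca$. I expect the only delicate part to be this bookkeeping of which neighbourhoods contain $h$; the choice of the constant value $N+1$ off $X$ is exactly what makes the bump function both well defined and locally bounded, while property of $h$ handles the points of $X$ for free.
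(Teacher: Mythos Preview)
Your proof is correct and follows essentially the same contradiction strategy as the paper: modify $F$ to a function agreeing with it on $X$ but taking the value $N+1$ elsewhere, then exhibit a $G$ that is locally bounded by the modified function yet attains $N+1$. The only difference is cosmetic: the paper sets the modified function equal to $F$ on the whole union $\bigcup_{f\in X}\Ca_{\overline f(F(f))}$ and lets $G$ take the value $N+1$ on the entire uncovered set, whereas you modify off $X$ itself and concentrate $G$ at a single uncovered point $h$; your version makes the local-boundedness check a touch more direct.
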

\begin{proof}
Assume not, let $M(F) = N$ and define
\[F_1(f) = \left\{\begin{array}{ccc}F(g)&{\rm if}& g \in \bigcup_{f \in X}C_{\overline{f}(F(f))} \\N + 1 & {\rm if} & 
g \not \in \bigcup_{f \in X}C_{\overline{f}(F(f))}\end{array}\right.\]
Then $M(F) = M(F_1)$ because the two functions agree on $X$. However, if we define 
\[G(g) = \left\{\begin{array}{ccc}0&{\rm if}& g \in \bigcup_{f \in X}C_{\overline{f}(F(f))} \\N + 1 & {\rm if} & 
g \not \in \bigcup_{f \in X}C_{\overline{f}(F(f))}\end{array}\right.\] then $G$ obviously satisfies the boundedness condition induced by $F_1$, but is not bounded by $N$, contradicting that $N = M(F_1)$.\end{proof}
\begin{theorem} Let $M $ be a Pincherle realiser that is computable in $\ci$. Then the Suslin functional $\su$ is computable in ${\mathcal H}(M)$ and $^2E$. \end{theorem}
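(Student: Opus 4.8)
The plan is to show that $\mathcal{H}(M)$ together with $^2E$ computes a \emph{strong} realiser $\Theta$ for the Heine--Borel theorem on $\Ca$, and then to invoke the fact (established for $\Theta_0$ in \cite{P1}, by an argument uniform in the realiser) that any such $\Theta$, in conjunction with $^2E$, computes $\su$. The entire point of passing to the honest version is that $\mathcal{H}(M)(F)$ exposes, inside the recorded history of the $\ci$-evaluation of $M(F)$, the countable set of queries actually put to $F$, and this set is exactly the kind of countable base that Lemma \ref{Lemma.5.14} converts into a covering of $\Ca$.

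First I would record that $M$, being a Pincherle realiser computable in $\ci$, is definable from an inductive procedure (Theorem \ref{Theorem.5.9}) and hence is a total hyper-sequential functional; in particular $M$ is countably based. Now fix $F:\Ca\rightarrow\N$. By definition, $\mathcal{H}(M)(F)$ is the fixed point of the inductive definition $\Gamma_F$ built in the proof of Theorem \ref{Theorem.5.9}, a set coding the unique calculation $\{(f_\beta,a_\beta,d_\beta)\}_{\beta<\lambda}$ of the procedure for $M$ that matches $F$, together with the ordering of the denotations. Using $^2E$ to decode this set, I would extract the enumerated countable set
\[X_F=\{f_\beta : \beta<\lambda,\ a_\beta\in\N\}\]
of genuine queries to $F$, discarding the $\ast$-entries of the LOG. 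The key observation is that $X_F$ is a base for $M$ at $F$: any $G$ agreeing with $F$ on $X_F$ matches the very same calculation, so $M(G)=M(F)$. Hence Lemma \ref{Lemma.5.14} applies and yields that $\{\Ca_{\overline{f}(F(f))} : f\in X_F\}$ is a covering of $\Ca$.

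Finally, since $\Ca$ is compact and $X_F$ already produces a countable covering of $\Ca$ by basic clopen sets, I would enumerate $X_F=\{g_0,g_1,\dots\}$ and use $^2E$ to carry out the search $\mu n.\,[\,\Ca_{\overline{g_0}(F(g_0))}\cup\cdots\cup\Ca_{\overline{g_n}(F(g_n))}=\Ca\,]$; the bracketed predicate is decidable outright from the finite strings involved, all of which are read off from the history, and a witness exists by compactness. Outputting the resulting finite set defines a strong Heine--Borel realiser $\Theta$ computable in $\mathcal{H}(M)$ and $^2E$, and composing with the \cite{P1} computation of $\su$ from a strong realiser and $^2E$ completes the argument. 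The main obstacle is the middle step: one must verify, from the construction of $\Gamma_F$ in Theorem \ref{Theorem.5.9}, that the queries recorded in the history are precisely the points at which the evaluation of $M$ consults $F$, so that $X_F$ is genuinely a base in the sense demanded by Lemma \ref{Lemma.5.14}. The remaining steps are a routine compactness search and a citation, the latter requiring only that the \cite{P1} argument be read as uniform in the strong realiser rather than tied to the specific $\Theta_0$.
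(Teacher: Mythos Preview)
Your argument is correct and reaches the same conclusion, but by a genuinely different route from the paper's own proof.

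The paper does not construct a Heine--Borel realiser at all. Instead it fixes the single functional $F$ from \cite[Theorem~5.1]{P1}, the one computable in $^2E$ for which no set of hyperarithmetical points can generate a covering of $\Ca$. Exactly as you do, it invokes Lemma~\ref{Lemma.5.14} to see that the queries occurring in the calculation of $M(F)$ must cover $\Ca$; but then, rather than searching for a finite subcover, it observes that some query must fail to be hyperarithmetical. Since {\sc next}$_\Omega$ is $^2E$-computable, any query reached at a recursive stage is hyperarithmetical, so the calculation must have length exceeding $\omega_1^{\rm CK}$. The history $\mathcal{H}(M)(F)$ therefore encodes a well-ordering of length at least $\omega_1^{\rm CK}$, and relative to any such ordering the set of indices of recursive well-orderings is $\Delta^1_1$, hence $^2E$-computable. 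Relativising the construction of $F$ gives $\su$.

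Your approach trades this ordinal-length argument for a uniform construction plus a citation: you use Lemma~\ref{Lemma.5.14} at \emph{every} $F$ to manufacture a strong Heine--Borel realiser $\Theta$ from $\mathcal{H}(M)$ and $^2E$, and then appeal to the \cite{P1} computation of $\su$ from such a $\Theta$ together with $^2E$. This is more modular and arguably more transparent, and it yields the slightly sharper intermediate statement that $\mathcal{H}(M)+{}^2E$ outright computes a strong realiser. The paper's route, on the other hand, is self-contained: it needs from \cite{P1} only the existence of the specific $F$, not the full reduction of $\su$ to an arbitrary realiser, and it makes explicit the mechanism (length of the calculation exceeding $\omega_1^{\rm CK}$) by which the Suslin functional is recovered. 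Your one soft spot is exactly the one you flag: the result cited from \cite{P1} is stated there for the particular $\Theta_0$, so you are relying on the uniformity of that argument in the choice of realiser; this is true, but it does push a nontrivial step outside your write-up.
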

\begin{proof}
In \cite[Theorem 5.1]{P1} it is proved that there is a functional $F$ computable in $^2E$ such that the collection of neighbourhoods defined from $F$ and the hyperarithmetical binary sequences is not a covering of $\Ca$. The construction easily relativizes to an arbitrary $f \in \N^\N$ so it suffices to show how we can compute a complete $\Pi^1_1$-set from ${\mathcal H}(M)$, $F$ and $^2E$.
\newline
For a general procedure $\Omega$ and an arbitrary $G$, the calculation of $\Omega(G)$ will form a countable basis for $\Omega(G)$. If $F$ and $M$ are as given, Lemma \ref{Lemma.5.14} then shows that the calculation of  $\Omega_M(F)$ must contain queries that are not hyperarithmetical. However, in an inductive procedure, if the input functional is computable in $^2E$, then all queries appearing at the level of a computable ordinal must also be computable in $^2E$. This follows from the assumption that the {\sc next}-function is computable in $^2E$. So, the calculation of $\Omega_M(F)$ must have a rank that goes beyond $\omega_1^{CK}$. The set of (indices for the)  computable well-orderings will then be both $\Sigma^1_1$ and $\Pi^1_1$ in this calculation, and thus computable from this calculation using $^2E$. The calculation itself is computable from $F$ and ${\mathcal H}(M)$, so we are through. \end{proof}

 In \cite{N18} it is proved that $\ci$ (under the name of IND) is computable in the Suslin functional $\su$ and any strong realiser for the Heine-Borel theorem. We can improve this as
 \begin{lemma}\label{lemma.pin} Let $M$ be a Pincherle realiser. Then $\ci$ is computable in $M$ and $\su$.\end{lemma}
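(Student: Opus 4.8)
The plan is to show that a Pincherle realiser $M$, used together with $\su$, already computes a \emph{strong} realiser $\Theta$ for the Heine--Borel theorem; the Lemma then follows at once, since it is recalled just above that $\ci$ is computable in $\su$ together with any strong realiser. This is precisely where the improvement lies: the earlier result of \cite{N18} feeds on the strongest of the three notions of realiser, whereas we extract the same computational content from the weakest one, at the cost of a single detour through $\su$.

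For a fixed $F:\Ca\rightarrow\N$ write $U_f=\Ca_{\overline{f}(F(f))}$ for the neighbourhood associated with $f$, so that $\{U_f:f\in\Ca\}$ covers $\Ca$ and $g\in U_f$ exactly when $\overline{g}(F(f))=\overline{f}(F(f))$. The first step is to read off, from the single number $N=M(F)$, a \emph{radius bound} for a finite subcovering. Consider the pointwise largest function that is locally bounded by $F$, namely $m(g)=\min\{F(f):g\in U_f\}$, the minimum existing because $g\in U_g$. If $g\in U_f$ then $f$ is among the functions over which the minimum is taken, so $m(g)\le F(f)$; thus $m$ satisfies the boundedness hypothesis in the definition of a Pincherle realiser, and therefore $m(g)\le N$ for every $g$. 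Hence for each $g$ there is some $f$ with $g\in U_f$ and $F(f)\le N$; equivalently, the finitely many binary strings $s$ with $|s|\le N$ that are \emph{canonical}, in the sense that $s=\overline{f}(F(f))$ for some $f$, already cover $\Ca$.

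The second step is to turn this into an honest strong realiser using $\su$. Being canonical,
\[\exists f\in\Ca\,(\overline{f}(|s|)=s\wedge F(f)=|s|),\]
is a $\Sigma^1_1$ condition in $F$ and is therefore decided by $\su$; and for each canonical $s$ we may use $\su$ for a $\Sigma^1_1$-selection of a witness $f_s\supseteq s$ with $F(f_s)=|s|$. Collecting the canonical strings of length $\le N$ together with their witnesses, and retaining a covering subfamily by the purely finite test that a finite set of binary strings covers $\Ca$, we obtain $\Theta(F)=\{f_{s_1},\dots,f_{s_n}\}$ with $\Ca\subseteq\Ca_{\overline{f_{s_1}}(F(f_{s_1}))}\cup\cdots\cup\Ca_{\overline{f_{s_n}}(F(f_{s_n}))}$, since $U_{f_{s_i}}=\Ca_{s_i}$. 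Thus $\Theta$ is a strong realiser computable in $M$ and $\su$, and composing with the cited result of \cite{N18} shows that $\ci$ is computable in $M$ and $\su$.

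The only genuinely non-formal point is the middle step, the claim that the Pincherle bound $N$ forces the canonical neighbourhoods of radius at most $N$ to cover $\Ca$; this is where compactness is used, and it is the exact reversal of Borel's passage from a pointwise assignment of neighbourhoods to a finite subcovering. One could instead inline the construction behind the cited theorem and drive the induction defining $\ci(F)$ directly: it suffices to produce from $M$ and $\su$ a code for some ordinal $\lambda\ge\alpha_F$, after which effective transfinite recursion of length $\lambda$ — computable in $F$ and $^2E$ along a given code — yields $f_\lambda=\ci(F)$. I would nonetheless keep the modular formulation above, as it isolates the single application of $M$ and makes transparent that nothing beyond the Pincherle property is required.
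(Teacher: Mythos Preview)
Your first step is correct and clean: the function $m(g)=\min\{F(f):g\in U_f\}$ does satisfy the local boundedness condition, so $m\le N=M(F)$ everywhere, and hence every $g\in\Ca$ lies in some $U_f$ with $F(f)\le N$. Thus the canonical neighbourhoods of radius at most $N$ cover $\Ca$.

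The gap is in the second step. You write that ``$s$ is canonical'', i.e.\ $\exists f\in\Ca\,(\overline f(|s|)=s\wedge F(f)=|s|)$, is a $\Sigma^1_1$ condition in $F$ and is therefore decided by $\su$, and that a witness $f_s$ can be selected likewise. But here $F$ is a \emph{type-$2$} parameter and is applied to the quantified variable $f$; this is not a $\Sigma^1_1$ formula in the sense that $\su$ handles (namely, with type-$1$ parameters only). Deciding $\exists f\in\Ca\,(F(f)=k)$ uniformly in the type-$2$ argument $F$ is essentially $^3E$ restricted to $\Ca$, and no combination of type-$2$ oracles --- in particular not $\su$ together with $F$ --- computes this. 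So neither the decision ``is $s$ canonical?'' nor the subsequent selection of a witness $f_s$ is available from $\su$ and $F$ alone. You have extracted the \emph{bound} $N$ from $M$, but not a strong (nor even a weak) realiser; to close the gap you would have to bring $M$ back in at this point, and you do not indicate how.

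The paper avoids this obstacle by not routing through a Heine--Borel realiser at all. Given the input $F:\Ca\to\Ca$ to $\ci$, it builds, for each pair $(x,y)$ and each $n$, a functional $G_{n,x,y}$ computable in $F$ and $\su$ so that $M(G_{n,x,y})$ is unbounded in $n$ precisely when $x\preceq y$ in the prewellordering generated by the $\ci$-iteration of $F$; from this prewellordering $\ci(F)$ is read off. The construction is arranged so that whenever the argument $X$ of $G_{n,x,y}$ fails to encode (a well-founded initial segment of) the true prewellordering, $G_{n,x,y}(X)$ already pins down a point of disagreement --- and this diagnosis uses only $\su$ applied to type-$1$ data extracted from $X$ and finitely many values of $F$. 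The one place a type-$1$ variable is genuinely quantified over with $F$ in the matrix is inside $M$'s own evaluation of its argument, never in a side-computation handed to $\su$.
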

 \begin{proof}Let $F:\Ca \rightarrow \Ca$ be given, and consider $F$ as an inductive definition, defining the sequence $\{f_\beta\}_{\beta < \alpha}$. This set is coded as a \emph{prewellordering} $(A,\preceq)$ where $x \preceq y$ if $f_\alpha(x) = f_\alpha(y) = 1$ and  we for all $\beta \leq \alpha$ have that $f_\beta(y) = 1 \rightarrow f_\beta(x) = 1$. Identify $\preceq$ with $ \{\langle x,y\rangle : x \preceq y\}$. We will see how to compute $\preceq$ from $F$, $\su$ and $M$. We let $x,y,z,w,n,m$ etc. range over $\N$.
 
 The idea is, for each $n,x,y$ to construct a functional $G_{n,x,y}$ such that if $x \preceq y$ then $M(G_{n,x,y}) \geq n$ and such that $G_{n,x,y}$ is independent of $n$ otherwise. We will then have that \[x \preceq y \leftrightarrow \forall m \exists n (M(G_{n,x,y}) > m).\]
 We will now define $G_{n,x,y}(X)$ in cases, where we in all cases except in the last one have defined $G_{n,x,y}(X)$ independently of $n$, $x$ and $y$, and so large that $\preceq$ will be different from $X$ for at least one argument $z < G_{n,x,y}(X)$. We rename $X$ to  $\preceq_X = \{\langle z,w\rangle : \langle z,w\rangle \in X\} $. Let $\langle z,w \rangle \in X_\prec$ if $\langle z,w\rangle \in \preceq_X$ and $\langle w,z\rangle \not \in \preceq_X$.
 
 For all cases below, we assume that none of the earlier cases apply. 
 \begin{enumerate}
 \item If $\preceq_X$ is not a preordering, there is a finite initial binary subsequence $s$ of (the characteristic function of)  $X$ such that no extension of $s$ is a preordering. In this case, let $G_{n,x,y}(X)$ be the length of the least such $s$.
 \item Let $W^X$ be the domain of the well founded part of $\prec_X$ (computable in the data using $\su$), and for each $z \in W^X$ let $f^X_z$ be the characteristic function of $\{w \in W^X : w \prec_X z\}$ and $g^X_z$ be the characteristic function of $\{w \in W^X : w \preceq_X z\}$.
 
 If there is a $\prec_X$-least $w$ such that $g^X_w \neq \max\{f^X_w,F(f^X_w)\}$, we know that $\preceq_X$ differs from $\preceq $, and we need to find a finite approximation to  (the characteristic function of) $X$ where this is manifested. Choose the numerically least such $w$. There will be two sub-cases:
 \smallskip
 
 \noindent - There is a $z$ such that $f^X_w(z) = 0$, $F(f^X_w)(z) = 0$ but $g^X_w(z) = 1$. Select the numerically least such $z$. Then we cannot have both $z \preceq w$ and $w \preceq z$, while we have $z \preceq_X w$ and $w \preceq_X z$, so we let \[G_{n,x,y}(X) = \max\{\langle z,w\rangle,\langle w,z\rangle\} + 1.\]
 \noindent - There is no such $z$. Then there is a $z$ such that $f_w^X(z) = 0$, $F(f^X_w)(z) = 1$, but $g^X_w(z) = 0$. Then we do have $z \preceq w$ and not $z \preceq_X w$, so we let $G_{n,x,y}(X) = \langle z,w \rangle + 1$ for the numerically least such $z$.
 \item If we get to this point, the well-founded part of  $\preceq_X$ is an initial segment of $\preceq$, and we want to decide if this initial segment is proper or not. This is tested by letting $g$ be the characteristic function of $W^X$: the initial segment is proper unless $F(g) \leq g$. If the initial segment is proper, we can find $z$ such that $z \preceq z$ but $z \not \in W^X$, recognised by $g(z) < F(g)(z)$ . If $\langle z,z\rangle \not \in X$, we let $G_{n,x,y}(X) = \langle z,z,\rangle + 1$. If $\langle z,z \rangle \in X$, $z$ is still not in the well founded part of $\prec_X$ so there will be a $w$ such that $w \prec z$ and $w$ is not in the well founded part of $\prec_X$. Since $z$ is of minimal rank in $\prec$ outside $X^W$, we cannot have that $w \prec z$ when $w \not \in X^W$. We can find such $z$ and $w$ using $^2E$ and search over $\N$, and we let $G_{n,x,y}(X) = \max\{\langle z,w\rangle , \langle w,z\rangle\} + 1$.
 \item So far, we have defined $G_{n,x,y}(X)$ independently of $n$, $x$ and $y$. If we have reached this far, we know that $\ci(F) = W^X$, and we let 
 \begin{itemize}
 \item $G_{n,x,y}(X) = n$ if $x \in W^X$, $y \in W^X$ and $x \preceq_X y$
 \item $G_{n,x,y}(X) =0$ otherwise.
 \end{itemize}
 
 \end{enumerate}
 Through items (1) - (3) above, we have constructed $G_{n,x,y}$ such that unless $X$ is a preordering with $\preceq$ as its well founded part, $G_{n,x,y}(X)$ is such that $\preceq$ is not in the neighbourhood of $X$ induced by $G_{n,x,y}(X)$. Moreover $G_{n,x,y}(X)$ is independent of $n$ (and of $x$ and $y$) in this case. We further have that $G_{n,x,y}$ is independent of $n$ if we do not have $x \preceq y$, while the function \[F_{n,x,y}(X) = \left\{\begin{array}{ccc}n & {\rm if} & X = \;\;\preceq \\ 0 & & {\rm otherwise}\end{array}\right.\]  will satisfy the bounding condition induced by $G_{e,x,y}$ if $x \preceq y$. In this case we must have that $M(F_{e,x,y}) \geq n$. This is what we aimed to obtain.
 \end{proof}

\begin{corollary} Let $M$ be a Pincherle realiser that is computable in $\ci$. Then $\ci$ and $({\mathcal H}(M),^2E)$ are computationally equivalent. \end{corollary}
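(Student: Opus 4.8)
The plan is to establish both directions of the equivalence, each of which follows by assembling results already proved in this section; the corollary is essentially a bookkeeping consequence of the preceding theorem and Lemma \ref{lemma.pin}.

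First I would check that $({\mathcal H}(M),{}^2E)$ is computable in $\ci$. Since $M$ is computable in $\ci$, the inductive procedure $\Omega \in {\bf W}(\ci)$ associated with $M$ is well-defined, and by definition ${\mathcal H}(M)$ sends a type-2 functional $F$ to the fixed point of the inductive definition $\Gamma_F$ built from $\Omega$. The construction in the proof of Theorem \ref{Theorem.5.9} exhibits this fixed point as the result of a nested application of $\ci$, organised by the recursion theorem for $\ci$-computations; hence ${\mathcal H}(M)$ is computable in $\ci$. Combined with the fact, established in Section \ref{2.}, that ${}^2E$ is computable in $\ci$, this yields that $({\mathcal H}(M),{}^2E)$ is computable in $\ci$.

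For the converse I would recover $\ci$ from $({\mathcal H}(M),{}^2E)$ by chaining the two immediately preceding results. The terminal value $M(F)$ is recorded inside the history ${\mathcal H}(M)(F)$, so $M$ itself is computable in ${\mathcal H}(M)$ and ${}^2E$. The theorem stated just before this corollary shows that the Suslin functional $\su$ is computable in ${\mathcal H}(M)$ and ${}^2E$. Finally, Lemma \ref{lemma.pin} gives that $\ci$ is computable in $M$ and $\su$. Composing these three computations shows that $\ci$ is computable in $({\mathcal H}(M),{}^2E)$, which completes the equivalence.

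I do not expect a genuine obstacle here, since the substantive content is already carried by the preceding theorem (whose proof uses Lemma \ref{Lemma.5.14} to force non-hyperarithmetical queries into the calculation of $\Omega_M(F)$) and by Lemma \ref{lemma.pin}. The only point demanding care is to confirm that the honest version simultaneously exposes \emph{both} the output value of $M$ and the full query-history whose ordinal rank exceeds $\omega_1^{\rm CK}$, so that $M$ and $\su$ are jointly recoverable from ${\mathcal H}(M)$ and ${}^2E$; this is guaranteed because ${\mathcal H}(M)(F)$ is the entire fixed point coding the calculation matching $F$, not merely its value.
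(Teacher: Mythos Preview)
Your proposal is correct and follows exactly the route the paper intends: the corollary is stated without proof precisely because it is an immediate combination of the preceding theorem (giving $\su$ from ${\mathcal H}(M)$ and ${}^2E$), Lemma \ref{lemma.pin} (giving $\ci$ from $M$ and $\su$), and the observation that ${\mathcal H}(M)$ and ${}^2E$ are computable in $\ci$ by construction. Your remark that $M$ is recoverable from ${\mathcal H}(M)$ via {\sc next}$_\Omega$ applied to the coded calculation is the one small point the paper leaves implicit, and you handle it correctly.
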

\section{Relativisations}\label{sec8}
It is a matter of routine to relativise concepts of computability to functions $f:\N \rightarrow \N$ or to subsets of $\N$. Our characterisation of the functionals of type 3 computable in  $\ci$ using inductive procedures do relativise directly to objects of type 1. We even had to do so in order to cope with inductive procedures themselves, since there will be function parameters in subcomputations of the form $\{e\}(\ci,\vec a)$. 
\smallskip

In this section we will briefly discuss how the concept of an inductive procedure relativises to parameters $\vec F$ of type 2, without going into any technical details. The key observation is that we can easily extend the definition of procedures to cope with multiple inputs $\vec F$, or, if we are interested in functionals of type 3 computable in a fixed functional $G$ of type 2, to input pairs $F,G$. We only have to add the coordinate of each query when asked during a calculation. 
\smallskip

Given a partial functional $\lambda (F,H)\{e\}(\ci , F,H)$ there will be a procedure $\Omega$ as before, where $\Omega$ and $\Omega_{\rm pre}$ are $\Pi^1_1$, there are functions  {\sc next}, {\sc block}, {\sc denote} and {\sc redenote} computable in $^2E$.  The calculations/strings corresponding to terminating computations will have a blocking accepting the axioms we gave, respecting the rules of bracketing and with no infinitely deep chains of blocks. Our main problem in describing what we mean with an inductive $G$-procedure for a total functional $\lambda F. \{e\}(F,G)$ is to find the right relativisation of $\Pi^1_1$ to $G$. Another, minor problem is that we must allow information about $G$ in the LOG of a calculation, or in some other way, see Remark \ref{remark.G}.
\begin{definition}{\em Let $G$ be a functional of type 2.

\begin{itemize} 
\item[a)] For $g \in \N^\N$, let $\{g^p_i\}_{i \in \N}$ be an enumeration of the set of functions primitive recursive in $g$, where the enumeration is uniformly computable in $g$.
\newline
A \emph{weakly arithmetical formula} $\Phi(g,G)$ is a formula that is arithmetical in $g$ and $\lambda i.G(g^p_i)$.
\item[b)] $X \subseteq \N^\N$ is \emph{weakly arithmetical} in $G$ if it is defined by \[g \in X \leftrightarrow \Phi(g,G)\] for some weakly arithmetical formula $\Phi$.
\item[c)] $X \subseteq \N^\N$ is $\Pi^1_1[G]$ if $X$ is the intersection of a $\Pi^1_1$-set and a set that is weakly arithmetical in $G$.
\end{itemize}

}\end{definition}
\begin{definition}{\em Let $\Omega$ be a procedure for a total functional $\Phi(F)$. Let $G$ be of type 2. $\Omega$ is an \emph{inductive $G$-procedure} if the following are satisfied:
\begin{enumerate}
\item $\Omega$ and $\Omega_{\rm pre}$ are $\Pi^1_1[G]$.
\item There is a function {\sc next}$_\Omega$ partially computable in $^2E$ and terminating on $\Omega_{\rm pre}$.
\item The calculations in  $\Omega$ have blockings, and the blocking structure is guided by the partial functions {\sc bloc}, {\sc denote} and {\sc redenote}, computable in $^2E$ and with the standard properties.
\end{enumerate}}\end{definition}
\begin{theorem} Let $G$ be of type 2 and $\Phi$ of type 3. Then $\Phi$ is computable in $\ci$ and $G$ if and only if $\Phi$ is definable by an inductive $G$-procedure. \end{theorem}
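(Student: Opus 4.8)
The statement is the relativisation of Theorem \ref{Theorem.5.9} to a fixed type-2 parameter $G$, and the plan is to push both directions of that theorem's proof through while keeping track of the extra oracle. Throughout, a query in a calculation now carries a coordinate recording whether it addresses the type-2 input $F$ or the fixed functional $G$; since $G$ is fixed, the $G$-queries have determined answers, and the notion $\Pi^1_1[G]$ is designed precisely to record the complexity contributed by these answers (the $\Pi^1_1$-part coming from the well-foundedness/ordering conditions, and the weakly-arithmetical-in-$G$ part from the recorded $G$-values).

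For the direction from inductive $G$-procedures to computability in $\ci$ and $G$, I would copy the forward construction in the proof of Theorem \ref{Theorem.5.9}: given $F$, build by the recursion theorem the nested family of inductive definitions $\Gamma_F$, $\Gamma_F^s$, and so on, one per block level, each simulating the evaluation of the calculation matching the pair $(F,G)$ and rewriting denotations at the end of each block. The only new point is where membership in $\Omega$ and $\Omega_{\rm pre}$, and the values of {\sc next}, {\sc block}, {\sc denote} and {\sc redenote}, are decided. Because $\Omega$ and $\Omega_{\rm pre}$ are $\Pi^1_1[G]$, each such test splits into a genuine $\Pi^1_1$-part, decided with $\su$ (which is computable in $\ci$), and a part weakly arithmetical in $G$, decided with $^2E$ and $G$ directly. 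Hence every test and every auxiliary function is computable in $\ci$ and $G$, and the nested-induction simulation of Lemma \ref{obs.2} goes through verbatim with $\ci$ and $G$ in place of $\ci$ alone.

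For the converse I would relativise the entire closure chain to $G$. The base cases are, first, the procedure for $\ci$ from Lemma \ref{Lemma4.2}, now allowing $G$-queries to appear in the LOG: its representation is $\Pi^1_1[G]$ and the auxiliary functions remain computable in $^2E$, exactly as in Lemma \ref{Lemma.5.2}, so it is an inductive $G$-procedure; and, second, the trivial one-step procedure realising an application of $G$, which is an inductive $G$-procedure outright. I would then re-prove that inductive $G$-procedures are closed under Kleene computability relative to the pair $(\ci,G)$, following the calculation construction of Definition \ref{Def.3.3}, the complexity argument of Lemma \ref{Lemma.4.4}, and the blocking bookkeeping internal to Theorem \ref{Theorem.5.9}, but now carrying the extra query coordinate and the relativised complexity class. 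Since any $\Phi$ computable in $\ci$ and $G$ is generated from these base objects by S1--S9, this produces an inductive $G$-procedure for it.

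The main obstacle is the complexity bookkeeping: verifying that $\Pi^1_1[G]$ — the intersection of a $\Pi^1_1$-set with a set weakly arithmetical in $G$ — is genuinely preserved under the closure construction, rather than inflating to full $\Pi^1_1$ relativised to $G$. The delicate point is that the weakly-arithmetical notion permits $G$ only at arguments of the form $g^p_i$, primitive recursive in an ambient $g$; so one must check that every $G$-query arising in a subcomputation feeds $G$ an argument primitive recursive in the LOG data accumulated up to that stage, so that the $G$-dependence of the verifying $^2E$-and-$G$ semi-check collapses into the single derived function $\lambda i.G(g^p_i)$, while the well-foundedness and local-correctness conditions remain $\Pi^1_1$. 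Granting this, the acceptance condition of each semi-check is again $\Pi^1_1[G]$, and the two directions close up, exactly mirroring the unrelativised argument.
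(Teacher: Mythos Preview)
Your forward direction (inductive $G$-procedure $\Rightarrow$ computable in $\ci$ and $G$) matches the paper's, and your identification of where $G$ enters — in deciding the weakly-arithmetical-in-$G$ part of membership in $\Omega_{\rm pre}$ — is exactly right.

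For the converse, however, the paper takes a different and substantially simpler route than the one you sketch. Rather than relativising the whole closure chain (Lemmas \ref{Lemma4.2}, \ref{Lemma.4.4}, and the blocking argument of Theorem \ref{Theorem.5.9}) to $G$ and tracking $\Pi^1_1[G]$ through each induction step, the paper first forgets that $G$ is fixed: it considers the \emph{two-input} partial functional $\lambda(F,H).\{e\}(\ci,F,H)$ and applies the \emph{unrelativised} machinery to it. This yields a procedure $\Omega^+$ (with two query coordinates) for which $\Omega^+$ and $\Omega^+_{\rm pre}$ are already outright $\Pi^1_1$, and {\sc next}, {\sc block}, {\sc denote}, {\sc redenote} are computable in $^2E$ alone. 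The desired $G$-procedure $\Omega$ is then obtained in one stroke by intersecting $\Omega^+$ with the set of strings that match $G$ on every $H$-query; since each such query function $f_d$ is primitive recursive in the code of the string, this matching condition is weakly arithmetical in $G$, so $\Omega$ and $\Omega_{\rm pre}$ land in $\Pi^1_1[G]$ by definition.

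This two-variable trick completely sidesteps the obstacle you correctly flagged at the end: you never have to check that $\Pi^1_1[G]$ is preserved under the S8 step of the closure argument, because the $G$-dependence is introduced only once, at the very end, by a single arithmetical-in-$G$ intersection. Your approach is not wrong in spirit, but the bookkeeping you defer with ``granting this'' is exactly what the paper's argument is designed to avoid, and making it precise would require real additional work (showing that the nested uses of $G$ in the recursive semi-check still collapse to a weakly-arithmetical condition on the ambient string).
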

\begin{proof} One way is proved exactly in the same way that we proved that if $\Phi$ is definable by an inductive procedure, then $\Phi$ is computable in $\ci$.
\smallskip

For the other direction, let $\Phi(F) = \{e\}(\ci , F , G)$. We consider the procedure $\Omega^+$ for the partial functional $\lambda F,H.\{e\}(\ci , F , H)$, where both $\Omega^+$ and $\Omega^+_{\rm pre}$ are $\Pi^1_1$ and where the functions {\sc next}, {\sc block} , {\sc denote} and {\sc redenote} are computable in $^2E$. Intersecting with a set that is weakly arithmetical in $G$ we get the procedures in $\Omega^+$ and the strings in $\Omega^+_{\rm pre}$ that are matching $G$. Those will be our $\Omega$ and $\Omega_{\rm pre}$.  \end{proof}
\begin{remark}\label{remark.G}{\em Pairs $(g,G(g))$ will still be present in the calculations. They may be considered to be elements of the LOG of the $G$-calculations. This would actually require a re-definition of our concepts of \emph{procedure} and \emph{calculation}, but we leave how to do it to the reader in case of interest. 

It may be possible to avoid  appearances of pairs   $(g,G(g))$ in the procedure for $\lambda F.\{e\}(F,G)$, but then at the cost of the complexity of the {\sc next}-function and the other functions guiding the blockings. These will then have to be computed by $\ci$ and $G$, and not just by $^2E$.}\end{remark}

\section{Summary and Open problems}\label{sec9}
In this paper we have investigated non-monotone induction as given by a functional $\ci$ of type 3 from the perspective of higher order computability theory. We have established strong closure properties for the \emph{companion} $\LL_\pi$ of the set of functions computable in $\ci$, and we have represented computations relative to $\ci$ and parameters of type 2 in the form of inductive procedures and sequential calculations. Computations relative to $\ci$ can be linearised in a natural way, since application of $\ci$ can be seen as the result of a linear process indexed by some ordinal, and the ordinal rank of a calculation reflects the length of a computation seen as a linear process. We have shown that the length of any terminating computation, with integer inputs in addition to $\ci$, is $\Pi^1_1$-characterisable. There are two open problems related to this:
\begin{problem}\label{problem1}{\em Are all ordinals $\alpha < \pi$ $\Pi^1_1$-characterisable?

}\end{problem}
\begin{problem}\label{problem2}{\em  Are there ordinals computable in $\ci$ that are not the length of any computation $\{e\}(\ci , \vec a)$? }\end{problem}

A positive solution to Problem \ref{problem1} would give us a nice characterisation of the closure ordinal $\pi$, but we conjecture that the answer is negative. We also believe that  when the two problems are solved, the solution will show that they are connected.

Problem \ref{problem2} asks if there is a gap-structure for computing with $\ci$ as it is for infinite time Turing machines, see Hamkins and Lewis \cite{HL} or Welch \cite{Welch}, and for recursion in $^3E$, and not as for computing relative to the Superjump $\mathbb S$. In case there are gaps, it will be of interest to see how the gap structure coincides with the gap structure of infinite time Turing machines computing in time bounded by $\pi$.

This problem also suggests that there is a distinction between various functionals of type 3 similar to the one between predicative and non-predicative arguments in mathematics: in order to compute ${\mathbb S}(F)$ we need to generate the 1-section of $F$, the set of functions computable in $F$, and we need that $F$ is total on its own 1-section, and then we have enough information to deduce what ${\mathbb S}(F)$ will be. This will also work when $F$ is partial, as long as it is not so partial that it is undefined for an input it is able to compute. In order to compute $\ci(F)$ for a partial $F$ we need that $F$ is total on functions computable from $F$ and $\ci$, including the final product of the `computation'. In our main theorem for establishing closure properties of $\pi$, it was essential for the argument that we construct an induction  where $\pi$ tells us to stop, and that we thus must stop before $\pi$. We can only consider partial inductive definitions computable in $\ci$ when they also make sense in the case when the recursion lasts $\pi$ steps in order to deduce that they must stop at an earlier stage. This is a kind of non-predicativity. 
\medskip

We will end this paper with an example of a partial functional $F:\Ca \rightarrow \Ca$ that is computable in $\ci$ and total on the set of $f \in \Ca$ that are computable in $\ci$, but where the closure set $X$ of the associated   inductive definition  is not computable in $\ci$ because $F(X)$ is undefined. 
\begin{example}{\em We define $F$ as a partial function from ${\mathcal P}(\N \times \N)$ to ${\mathcal P}(\N \times \N)$:
\begin{itemize}
\item[-] If $X$ is a well ordering, use Gandy selection for $\ci$ to find an index $e$ for a well ordering $Y \subseteq \N \times \N$ with domain $B$ and  of length extending that of $X$, and let 
\[F(X) = X \cup \{(a,\langle e , b \rangle) : a \in X \wedge b \in B\} \cup\{(\langle e , b\rangle,\langle e , c \rangle) : (b,c) \in Y\}.\]
\item[-]If $X$ is not a well ordering, we let $F(X) = X$.
\end{itemize}
During the induction, a new index $e$ must be found each time, so $F(X_\beta)$ will be an end-extension of $X_\beta$ with a well ordering of the order-type of some $Y_\beta$ with index $e_\beta$ for all $\beta < \pi$. The recursion will stop after $\pi$ steps because then $F(X_\pi)$ is undefined. Clearly, $X_\pi$ is not computable in $\ci$.

}\end{example}

\subsection*{Acknowledgements}

I thank Sam Sanders for involving me in the project this paper is a spin-off of, for reading a preliminary version of this paper, and for giving valuable feedback on the exposition. Our joint project started with him asking me if I could say anything about the computational properties of some weird-looking functionals of type 3. The rest is history.  

I am grateful to John Hartley for his  comments on the exposition.

I am grateful to editors and anonymous referees of other papers from our joint project, their sharp comments often helped me think more clearly about how to present higher order computability in the context of those papers, and then of this one.

I also thank the participants of the seminar on mathematical logic at the University of Oslo for  attending my informal talks on the subjects of this paper, and giving valuable feedback.

\end{document}